\definecolor{myblue}{rgb}{0,0,0.6}         % used for links in hyperref
\definecolor{gray}{rgb}{0.5,0.5,0.5}
\definecolor{amcol}{rgb}{0.8,0,0}
\definecolor{escol}{rgb}{0,0,0.8}
\definecolor{estcol}{rgb}{0,0.8,0}
\definecolor{tcfcol}{RGB}{251,94,5}
\newcommand{\di}{\,\mathrm{d}}                              % for integrals
\newcommand{\oon}{\;\text{on}\;}
\newcommand{\deO}{{\partial\Omega}}
\newcommand*{\conj}[1]{\overline{#1}}
\newcommand*{\N}[1]{\left\|#1\right\|}
\newcommand*{\abs}[1]{\left|#1\right|}
\newcommand{\Tnorm}[1]{|||#1|||}
\newcommand{\curl} {\mathop{\rm curl}\nolimits}
\newcommand{\dive} {\mathop{\rm div}\nolimits}
\def\div{\mathop{\rm div}\nolimits}
\DeclareMathOperator{\esssup}{ess\,sup}
\DeclareMathOperator{\essinf}{ess\,inf}
\DeclareMathOperator{\dist}{dist} %
\newcommand{\Uu}[1]{{\mathbf{#1}}}                   % bf which scales size for latin letters
\newcommand{\IC}{\mathbb{C}}
\newcommand{\IN}{\mathbb{N}}
\newcommand{\IR}{\mathbb{R}}
\newcommand{\ba}{{\Uu a}}\newcommand{\bb}{{\Uu b}}
\newcommand{\bd}{{\Uu d}}\newcommand{\be}{{\Uu e}}%\newcommand{\bf}{{\Uu f}}
\newcommand{\bg}{{\Uu g}}
\newcommand{\bn}{{\Uu n}}
\newcommand{\bv}{{\Uu v}}\newcommand{\bw}{{\Uu w}}\newcommand{\bx}{{\Uu x}}
\newcommand{\by}{{\Uu y}}\newcommand{\bz}{{\Uu z}}
\newcommand{\bA}{{\Uu A}}
\newcommand{\bE}{{\Uu E}}\newcommand{\bF}{{\Uu F}}
\newcommand{\bH}{{\Uu H}}
\newcommand{\bJ}{{\Uu J}}\newcommand{\bK}{{\Uu K}}
\newcommand{\bQ}{{\Uu Q}}
\newcommand{\bZ}{{\Uu Z}}
           \newcommand{\bzero}{\Uu{0}}
\newcommand{\calO}{{\mathcal O}}
\newcommand{\calT}{{\mathcal T}}
\newtheorem{theorem}{Theorem}[section]
\newtheorem{lemma}[theorem]{Lemma}
\newtheorem{defin}[theorem]{Definition}
\newtheorem{proposition}[theorem]{Proposition}
\newtheorem{cor}[theorem]{Corollary}
\newtheorem{remark}[theorem]{Remark}
 \newtheorem{example}[theorem]{Example}
\newcommand{\supp}{\operatorname{supp}}
\newcommand{\re}{{\rm e}}
\newcommand{\ri}{{\rm i}}
\newcommand{\rd}{{\rm d}}
\newcommand{\beq}{\begin{equation}}      \newcommand{\eeq}{\end{equation}}
\newcommand{\beqs}{\begin{equation*}}    \newcommand{\eeqs}{\end{equation*}}
\newcommand{\bit}{\begin{itemize}}       \newcommand{\eit}{\end{itemize}}
\newcommand{\ben}{\begin{enumerate}}     \newcommand{\een}{\end{enumerate}}
\newcommand{\bal}{\begin{align}}         \newcommand{\eal}{\end{align}}
\newcommand{\bals}{\begin{align*}}       \newcommand{\eals}{\end{align*}}
\newcommand{\bse}{\begin{subequations}}	 \newcommand{\ese}{\end{subequations}}
\newcommand{\bpr}{\begin{proposition}}   \newcommand{\epr}{\end{proposition}}
\newcommand{\bre}{\begin{remark}}        \newcommand{\ere}{\end{remark}}
\newcommand{\bpf}{\begin{proof}}         \newcommand{\epf}{\end{proof}}
\newcommand{\ble}{\begin{lemma}}         \newcommand{\ele}{\end{lemma}}
\newcommand{\bco}{\begin{corollary}}     \newcommand{\eco}{\end{corollary}}
\newcommand{\bex}{\begin{example}}       \newcommand{\eex}{\end{example}}
\newcommand{\bth}{\begin{theorem}}       \newcommand{\enth}{\end{theorem}}
\newcommand{\Rea}{\mathbb{R}}            \newcommand{\Com}{\mathbb{C}}
\newcommand{\eps}{\varepsilon}
\newcommand{\pdiff}[2]{\frac{\partial #1}{\partial #2}}
\newcommand{\gu}{\nabla u}
\newcommand{\gv}{\nabla v}
\newcommand{\half}{\frac{1}{2}}
\newcommand{\tendi}{\rightarrow \infty}
\newcommand{\tendo}{\rightarrow 0}
\newcommand{\DOmegabar}{C^\infty(\overline{D})}
\newcommand{\epz}{{\epsilon_0}}
\newcommand{\muz}{{\mu_0}}
\newcommand{\hn}{{\hat\bn}}
\newcommand{\hx}{{\widehat\bx}}
\newcommand{\deB}{{\partial B}}
\newcommand{\deOimp}{{\partial \domainimp}}
\newcommand{\domain}{{\Omega}}
\newcommand{\loc}{_{\mathrm{loc}}}
\newcommand{\Rimp}{R_{\domainimp}}%{R_{\chi}}}
\newcommand{\mythmname}[1]{\textbf{\emph{(#1.)}}}
\newcommand{\cO}{{\mathcal O}}
\newcommand{\cD}{{\mathcal D}}
\newcommand{\ton}{\text{ on }}
\newcommand{\tin}{\text{ in }}
\newcommand{\tfa}{\text{ for all }}
\newcommand{\tfor}{\text{ for }}
\newcommand{\tas}{\text{ as }}
\newcommand{\tand}{\text{ and }}
\newcommand{\mymatrix}[1]{\uul{\Uu{#1}}}
\newcommand{\MA}{{\mymatrix{A}}}
\newcommand{\MI}{{\mymatrix{I}}}
\newcommand{\MM}{{\mymatrix{M}}}
\newcommand{\sfA}{{\mathsf A}}
\newcommand{\SPD}{{\mathsf{SPD}}}
\newcommand{\Sym}{{\mathsf{Sym}}}
\newcommand{\domainimp}{\Omega}
\renewcommand{\mymatrix}[1]{\mathsf{#1}}
\newcommand{\imp}{_{\mathrm{imp}}}
\newcommand{\dudnA}{\pdiff{u}{n_\MA}}
\newcommand{\hb}{\widehat{\bb}}
\newcommand{\wn}{\omega}
\newcommand{\esssupp}{{\rm ess\,supp}}
\newcommand{\epsilonmin}{\epsilon_{\min}}
\newcommand{\epsilonmax}{\epsilon_{\max}}
\newcommand{\mumin}{\mu_{\min}}
\newcommand{\mumax}{\mu_{\max}}
\newcommand{\epsilonconstant}{\gamma_\epsilon}
\newcommand{\muconstant}{\gamma_\mu}
\newcommand{\weightedLtmu}{L^2(B_R;\mu)}
\newcommand{\weightedLtepsilon}{L^2(B_R;\epsilon)}
\numberwithin{equation}{section}
\newcommand{\BE}{\bE}%\boldsymbol E}
\newcommand{\BJ}{\bJ}%\boldsymbol J}
\newcommand{\BK}{\bK}%\boldsymbol K}
\newcommand{\BL}{L}%\boldsymbol L}
\newcommand{\gepsilon}{\gamma_{\epsilon}}
\newcommand{\gmu}{\gamma_{\mu}}
\newcommand{\wnorm}[3]{ \N{#3}_{L^2(#2; #1)}}
\newcommand{\epsmin}{\epsilon_{\min}}
\newcommand{\Rscat}{R_{\rm scat}}
\title{Explicit bounds for the high-frequency time-harmonic Maxwell equations in heterogeneous media}
\author{Th\'eophile Chaumont-Frelet\thanks{
University C\^{o}te d'Azur, Inria, CNRS, LJAD, 
2004 Route des Lucioles, 06902 Valbonne, France
%2004 Route des Lucioles, 06902 Valbonne, France, and  
%Laboratoire J.A.~Dieudonn\'e, Nice, France Parc Valrose, 28 Avenue Valrose, 06000 Nice, France
(\texttt{theophile.chaumont@inria.fr})},\,
Andrea Moiola\thanks{Department of Mathematics, University of Pavia, 27100 Pavia, Italy (\texttt{andrea.moiola@unipv.it})},
\, Euan A.\ Spence\thanks{Department of Mathematical Sciences, University of Bath, Bath, BA2 7AY (\texttt{E.A.Spence@bath.ac.uk})}, 
}
\date{\today}
\begin{document}

\maketitle

\begin{abstract}
We consider the time-harmonic Maxwell equations posed in $\Rea^3$. 
We prove a priori bounds on the solution for $L^\infty$ coefficients $\epsilon$ and $\mu$ satisfying certain monotonicity properties, with these bounds 
valid for arbitrarily-large frequency, and explicit in the frequency and properties of $\epsilon$ and $\mu$. 
The class of coefficients covered includes (i) certain $\epsilon$ and $\mu$ for which well-posedness of the time-harmonic Maxwell equations had not previously been proved, and (ii)
scattering by a penetrable $C^0$ star-shaped obstacle
where $\epsilon$ and $\mu$ are smaller inside the obstacle than outside.
In this latter setting, the bounds are uniform across all such obstacles, and the first sharp frequency-explicit bounds for this problem at high-frequency.

\medskip\noindent
\textbf{AMS subject classification}: 
35Q61  %\es{= Maxwell}
78A45. %\es{=scattering}, 

\medskip\noindent
\textbf{Keywords}:  Maxwell, high frequency, transmission problem, heterogeneous media, wellposedness.
\end{abstract}

\section{Introduction}\label{sec:intro}

\subsection{The Maxwell transmission problem}\label{sec:set_up}
We consider the time-harmonic Maxwell equations (in first-order form) posed in $\Rea^3$; i.e., find $\bE,\bH \in H_{\rm loc}(\curl;\Rea^3)$ satisfying
\begin{align}\label{eq:first_order}
\ri\wn\epsilon\bE+\nabla\times\bH=\bJ, \qquad
-\ri\wn\mu\bH+\nabla\times\bE=\bK
\end{align}
where the frequency $\omega>0$, the sources $\bJ, \bK\in L^2_{\rm comp}(\Rea^3)$, 
and the coefficients $\epsilon$ and $\mu$ are $3\times3$ real, symmetric, positive-definite matrices
 (with the set of these matrices denoted by $\SPD$)
such that $\epsilon = \epsilon_0\MI$ and $\mu = \mu_0 \MI$ outside a compact set, with
$\epsilon_0,\mu_0>0$.
The fields $\bE(\bx), \bH(\bx)$ additionally satisfy the Silver--M\"uller radiation condition 
\begin{equation}
\label{eq:intro_SM}
\abs{\sqrt{\epsilon_0}\bE(\bx)-\sqrt{\muz}\bH(\bx)\times\hx}=\calO(r^{-2})
\,\,\text{or}\,\,
\abs{\sqrt{\mu_0}\bH(\bx)+\sqrt{\epz}\bE(\bx)\times\hx}=\calO(r^{-2})
\,\,\tas r:=|\bx|\to \infty
\end{equation}
uniformly in $\hx:= \bx/r$.

The PDEs in \eqref{eq:first_order} are understood in a distributional sense, and thus are well defined for $\epsilon,\mu\in L^\infty$.
We are particularly interested in the case when $\epsilon$ and $\mu$ are discontinuous. 
 Recall that when $\epsilon$ and $\mu$ have a single jump on a common interface, \eqref{eq:first_order}  corresponds to transmission by a penetrable obstacle (see the examples in \S\ref{sec:main_results} below) with the conditions that the tangential jumps across the interface of both $\bE$ and $
 \bH$ are zero (coming from the condition that $\bE,\bH\in H_{\rm loc}(\curl;\Rea^3)$).

\subsection{Statement of the main results}\label{sec:main_results}

The main results of this paper give bounds on the solution of \eqref{eq:first_order}--\eqref{eq:intro_SM} that are explicit in both $\wn$ and properties of the coefficients $\epsilon$ and $\mu$, and valid for arbitrarily-large $\wn$.

\paragraph{Notation.} 
Given $\MM_1,\MM_2\in\SPD$ we write $\MM_1\preceq \MM_2$ to denote inequality in the sense of quadratic forms, namely $\MM_1\bv\cdot \conj\bv\le \MM_2\bv\cdot \conj\bv$ for all $\bv\in\IC^3$. 
For a positive scalar $m$ and $\MM\in\SPD$ we write $m\preceq \MM$ ($\MM\preceq m$) if $m\MI\preceq\MM$ ($\MM\preceq m\MI$, respectively), where $\MI$ is the identity matrix.
For a matrix field $\MM\in L^\infty(D,\SPD)$ defined over a given domain $D$, we define $\essinf_{\bx\in D} \MM$ as the largest non-negative number $m'$ such that $m'\preceq\MM(\bx)$ for almost every $\bx\in D$; $\esssup_{\bx\in D} \MM$ is defined similarly. 
For $\MM\in \SPD$, let 
\beq\label{eq:weighted_norms}
\N{\bv}^2_{L^2(D;\MM)}:= (\MM \bv, \bv)_{L^2(D)} \quad\tfa \bv \in \IC^3.
\eeq

Let $0<\epsilonmin\leq \epsilonmax<\infty$ and
$0<\mumin\leq \mumax<\infty$ be such that
\beq\label{eq:limits}
\epsilonmin\preceq \epsilon(\bx)\preceq \epsilonmax, \qquad
\mumin\preceq \mu(\bx)\preceq \mumax
\qquad \text{for almost every }\bx\in\IR^3,
\eeq
and
let $R>0$ be such that 
\beq\label{eq:support_scatterer}
 \supp(\epsilon- \epsilon_0\MI )\cup\supp(\mu-\mu_0 \MI )  \Subset B_R \quad\tand\quad \supp\, \bJ \cup \supp\, \bK \subset B_R,
\eeq
where $B_R$ denotes the ball of radius $R$ centred at the origin.

\begin{theorem}\mythmname{Bound on transmission problem with certain $W^{1,\infty}$ coefficients}\label{thm:BoundSmooth}
Suppose that, in addition to the set up in \S\ref{sec:set_up}, 
$\epsilon,\mu\in W^{1,\infty}(\IR^3,\SPD)$ 
with
\begin{align}\label{eq:GrowthCoeff}
\epsilonconstant:=\essinf_{\bx\in\Rea^3}
\big(\MI+\big((\bx\cdot\nabla)\epsilon \big)\epsilon^{-1}\big)
>0,
\qquad
\muconstant:=\essinf_{\bx\in\Rea^3}
\big(\MI+\big((\bx\cdot\nabla)\mu \big)\mu^{-1}\big)
>0.
\end{align}
Then the solution $\bE, \bH \in H_{\rm loc}(\curl;\Rea^3)$ of \eqref{eq:first_order}--\eqref{eq:intro_SM} exists, is unique, and satisfies
\begin{align}\nonumber
\epsilonconstant\N{\bE }^2_{L^2(B_R;\epsilon)} + \muconstant\N{\bH}^2_{L^2(B_R;\mu)} 
&\le
2\max\left\{ 8R^2 \left(\frac{\epsilonmax\mumax}{\muconstant} + \frac{\epsilon_0 \mu_0}{\epsilonconstant}\right), \frac{\epsilonconstant}{\omega^2}\right\}
\N{\bJ}_{L^2(B_R;\epsilon^{-1})}^2
\\
&+2\max\left\{ 8R^2 \left(\frac{\epsilonmax\mumax}{\epsilonconstant} + \frac{\epsilon_0 \mu_0}{\muconstant}\right), \frac{\muconstant}{\omega^2}\right\}
\N{\bK}_{L^2(B_R;\mu^{-1})}^2
\label{eq:thm:BoundSmooth}
\end{align}
where each instance of $8$ on the right-hand side reduces to $4$ if either $\bK=\bzero$ or $\bJ=\bzero$.
\end{theorem}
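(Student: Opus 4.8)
I would prove the a priori bound \eqref{eq:thm:BoundSmooth} by a Rellich--Morawetz multiplier argument applied directly to the first-order system \eqref{eq:first_order}, and then deduce existence and uniqueness in the standard way: uniqueness is immediate from \eqref{eq:thm:BoundSmooth} on setting $\bJ=\bK=\bzero$, and existence follows by writing \eqref{eq:first_order}--\eqref{eq:intro_SM} as a variational problem on $B_R$ with the exterior radiation operator on $\partial B_R$ (a compact perturbation of a coercive $\HcurlE$-problem) and invoking the Fredholm alternative. One subtlety must be dealt with first: for merely $L^2$ data the solution need not be in $H^1_{\rm loc}$, since \eqref{eq:first_order} only gives $\dive(\epsilon\bE),\dive(\mu\bH)\in H^{-1}$. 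I would therefore first establish \eqref{eq:thm:BoundSmooth} for $\bJ,\bK\in C^\infty_{\rm c}(B_R)$ — for which interior regularity makes $\bE,\bH$ locally $H^1$, so that the manipulations below are rigorous — and then pass to general $\bJ,\bK\in L^2_{\rm comp}(B_R)$ by density, using that $\supp\bJ\cup\supp\bK\Subset B_R$ (so mollified data keep their support inside $B_R$) and that the right-hand side of \eqref{eq:thm:BoundSmooth} involves only $L^2$-norms of the data.

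For the bound itself, the plan is to test \eqref{eq:first_order} against Rellich--Morawetz multipliers built from $\bx$, $\bE$ and $\bH$, integrate by parts over a ball $B_{R'}$ with $R'\ge R$ arbitrary, and — crucially — use \eqref{eq:first_order} itself to replace every occurrence of $\curl\bH$ and $\curl\bE$ by $\bJ-\ri\wn\epsilon\bE$ and $\bK+\ri\wn\mu\bH$, so that the curls are never estimated independently. This should produce an identity of the form
\beqs
\int_{B_{R'}}(\epsilon+(\bx\cdot\nabla)\epsilon)\,\bE\cdot\conj\bE + \int_{B_{R'}}(\mu+(\bx\cdot\nabla)\mu)\,\bH\cdot\conj\bH = 2\,\mathrm{Re}\!\int_{B_{R'}}(\text{data terms}) - 2\!\int_{\partial B_{R'}}(\text{boundary terms}),
\eeqs
in which the coefficient-derivative contributions have organised themselves exactly into the combinations appearing in \eqref{eq:GrowthCoeff}. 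The hypothesis \eqref{eq:GrowthCoeff} then enters precisely as the statement $\epsilon+(\bx\cdot\nabla)\epsilon\succeq\epsilonconstant\,\epsilon$ and $\mu+(\bx\cdot\nabla)\mu\succeq\muconstant\,\mu$ in the sense of quadratic forms (conjugating $\MI+((\bx\cdot\nabla)\epsilon)\epsilon^{-1}$ by $\epsilon^{1/2}$ makes it symmetric with least eigenvalue $\epsilonconstant$), so the left-hand side dominates $\epsilonconstant\N{\bE}^2_{L^2(B_{R'};\epsilon)}+\muconstant\N{\bH}^2_{L^2(B_{R'};\mu)}$. Among the data terms are the divergence-constraint contributions involving $\dive\bJ$ and $\dive\bK$; rewriting $\dive\bJ=\ri\wn\dive(\epsilon\bE)$ and integrating by parts once more (legitimate for the smooth data to which we have reduced), using $\nabla(\bx\cdot\bE)=\bE+(\bx\cdot\nabla)\bE+\bx\times\curl\bE$ and again eliminating the surviving curl via \eqref{eq:first_order}, these reduce to further $\int\epsilon\bE\cdot\conj\bE$-type terms (moved to the left-hand side), terms bounded pointwise by $\lesssim R(|\bE|+|\bH|)$ against the data, and boundary terms on $\partial B_{R'}$ — so that no derivative of the data survives on the right.

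It then remains to control the boundary term and the data terms. Since $\epsilon\equiv\epsilon_0\MI$, $\mu\equiv\mu_0\MI$ and the data vanish in $B_{R'}\setminus B_R$, the integrand on the left there reduces to $\wn$-independent nonnegative quantities and may be discarded outside $B_R$; and the Silver--M\"uller condition \eqref{eq:intro_SM} forces $\liminf_{R'\to\infty}\big(-2\int_{\partial B_{R'}}(\text{boundary terms})\big)\le0$ — the radial field components decay like $r^{-2}$ and the tangential parts cancel against the radiation defect — once the multiplier has been supplemented by a Morawetz--Ludwig-type term adapted to \eqref{eq:intro_SM} (alternatively, keeping $R'=R$, the boundary term is handled by the coercivity of the exterior Maxwell Dirichlet-to-Neumann operator). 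Letting $R'\to\infty$ thus yields a bound in which $\epsilonconstant\N{\bE}^2_{L^2(B_R;\epsilon)}+\muconstant\N{\bH}^2_{L^2(B_R;\mu)}$ is dominated by terms of the form $R\,\N{\bJ}_{L^2(B_R;\epsilon^{-1})}\N{\bE}_{L^2(B_R;\epsilon)}$ and $R\,\N{\bK}_{L^2(B_R;\mu^{-1})}\N{\bH}_{L^2(B_R;\mu)}$, with constants explicit in $\epsilonmax,\mumax,\epsilon_0,\mu_0,\epsilonconstant,\muconstant$; applying the weighted Cauchy--Schwarz and Young inequalities to each data term with a free parameter, absorbing the $\N{\bE}^2_{L^2(B_R;\epsilon)}$ and $\N{\bH}^2_{L^2(B_R;\mu)}$ parts into the left-hand side, and then choosing the parameter to be the larger of the two natural scalings — one of order $R^2$, one of order $\wn^{-2}$ — reproduces the $\max\{\cdot,\cdot\}$ structure of \eqref{eq:thm:BoundSmooth} and the constant $8$, which halves to $4$ precisely when only one of $\bJ,\bK$ is present, so that the mixed $\bE$--$\bH$ term need not be split.

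I expect the main obstacle to be twofold. The first is to arrange the entire computation so that it is rigorous for the low-regularity solution \emph{and} so that the data enter the final estimate only through $L^2$-norms: this is exactly what forces working in first-order form, eliminating all curls through \eqref{eq:first_order}, performing the extra integration by parts on the divergence-constraint terms, and reducing by density to smooth, strictly interiorly supported data. The second is the careful bookkeeping needed to keep every constant explicit and, in particular, independent of $\wn$ in the high-frequency regime — tracking how $\epsilonmax,\mumax,\epsilon_0,\mu_0,\epsilonconstant,\muconstant,R$ enter each integration by parts and each use of Young's inequality, and verifying that the Morawetz-type boundary term genuinely has nonpositive limit under only the hypotheses \eqref{eq:limits} and \eqref{eq:GrowthCoeff}.
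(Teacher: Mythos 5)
Your overall strategy --- a Morawetz--Rellich multiplier applied to the first-order system, eliminating every $\curl$ through \eqref{eq:first_order} itself, using a linear combination of the ``Rellich'' multipliers $\epsilon\conj\bE\times\bx$, $\mu\conj\bH\times\bx$ and a ``$\beta$'' term $\pm\beta\conj\bH,\pm\beta\conj\bE$ adapted to the Silver--M\"uller condition to kill the boundary contribution, then Cauchy--Schwarz and Young with a free parameter to produce the $\max\{R^2,\,\wn^{-2}\}$ structure --- is exactly the paper's route (Lemmas \ref{lem:Morawetz1}, \ref{lem:IntegratedM}, \ref{lem:infinity}, \ref{lem:BoundSmooth_old} and the final Young step via Lemma \ref{lem:E1}). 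The identification $\epsilon+(\bx\cdot\nabla)\epsilon\succeq\epsilonconstant\epsilon$, the interior-regularity reduction to $\bE,\bH\in H^1(B_R)$ via regular decomposition, and the treatment of existence/uniqueness through \cite[Theorem 2.10]{PiWeWi01} are also the paper's. So most of what you describe is correct and matches the paper.

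There is, however, a genuine gap in your treatment of the divergence-constraint terms, and it is precisely the point where the paper's proof is most delicate. After the Morawetz identity is applied, the data enter through $2\Re\int_{B_R}(\conj\bE\cdot\bx)\,(\ri\wn)^{-1}\nabla\cdot\bJ$ (and the analogous $\bK$-term). You propose to ``integrate by parts once more'' using $\nabla(\conj\bE\cdot\bx)=\conj\bE+(\bx\cdot\nabla)\conj\bE+\bx\times\curl\conj\bE$ and then eliminate the residual curl via the equations, claiming that no derivative of the data survives. But this integration by parts leaves the term $-\frac{2}{\wn}\Im\int\big((\bx\cdot\nabla)\conj\bE\big)\cdot\bJ$, and $(\bx\cdot\nabla)\bE$ is \emph{not} expressible through $\curl\bE$ and $\dive(\epsilon\bE)$; it involves the full gradient of $\bE$ and is not controlled by any a priori bounded quantity. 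Integrating by parts a second time to transfer the derivative to $\bJ$ brings back $(\bx\cdot\nabla)\bJ$ --- a derivative of the data --- so the move is circular. Consequently, your proposed density argument from $C^\infty_{\rm c}$ data to $L^2$ data breaks down: the constant in the intermediate estimate still depends on $\|\nabla\cdot\bJ\|_{L^2}$ and $\|\nabla\cdot\bK\|_{L^2}$, which are not controlled by $\|\bJ\|_{L^2}$ and $\|\bK\|_{L^2}$. The paper removes these terms not by integrating by parts but by a Helmholtz-type projection (Lemma \ref{lem:HdivtoL2}): solve the coercive problems \eqref{eq:BVPpq} for $p,q\in H^1_0(B_R)$, observe $\bJ-\ri\wn\epsilon\nabla p$ and $\bK-\ri\wn\mu\nabla q$ are divergence-free, apply the $H(\div)$-data bound to $(\bE-\nabla p,\bH-\nabla q)$, and use the orthogonality relations \eqref{eq:orthog1}--\eqref{eq:orthog2} to reassemble the estimate. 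This is the ingredient your proposal is missing, and it is responsible both for the explicit $\max\{\cdot,\cdot\}$ in \eqref{eq:thm:BoundSmooth} and for the absence of $\|\nabla\cdot\bJ\|$, $\|\nabla\cdot\bK\|$ in the final constants.
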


Note that $\epsilonconstant,\muconstant\le1$, and that $\epsilonconstant=1$ when $\epsilon$ is radially non-decreasing (and similarly for $\gamma_\mu$).
The conditions in \eqref{eq:GrowthCoeff} can be rewritten as
\beqs
\pdiff{}{r}\big( r\epsilon \big)\epsilon^{-1} \succeq \epsilonconstant
\quad\tand\quad
\pdiff{}{r}\big( r\mu \big) \mu^{-1}\succeq \muconstant, \quad \text{a.e.\ in }\IR^3,
\eeqs
suggesting that the most general $\epsilon$ and $\mu$ for which we can prove a bound are 
\beqs
\epsilon(\bx) = \epsilon^* \MI + \frac{\widetilde{\Pi}_{\epsilon}(\bx)}{r},
\quad\tand\quad 
\mu(\bx) = \mu^* \MI + \frac{\widetilde{\Pi}_{\mu}(\bx)}{r}
\eeqs
for $\epsilon^*, \mu^*>0$, where $\widetilde{\Pi}_\epsilon,\widetilde{\Pi}_{\mu} \in L^\infty(\Rea^3, \SPD)$,
are monotonically non-decreasing, in the sense of quadratic forms, in the radial direction.
To avoid technicalities arising from the singularity of $1/r$ at the origin,
we prove a bound under the following slightly-more-restrictive conditions.%

\begin{theorem}\mythmname{Bound on transmission problem with radially non-decreasing $L^\infty$ coefficients}\label{thm:BoundRough}
Suppose that, in addition to the set up in \S\ref{sec:set_up}, 
$\epsilon,\mu\in L^\infty(\Rea^3;\SPD)$ are such that 
\beq\label{eq:RoughCoefficients}
\epsilon(\bx) = \epsilonmin \MI +\Pi_{\epsilon}(\bx)  
\quad\tand\quad
\mu(\bx) = \mumin\MI + \Pi_{\mu}(\bx)
\quad\text{ for almost every }\, \bx\in \Rea^3,
\eeq
where $\Pi_{\epsilon},\Pi_{\mu} \in L^\infty(\Rea^3;\SPD)$
are monotonically non-decreasing, in the sense of quadratic forms, in the radial direction, i.e.,~for all $h\geq 0$,
\beq\label{eq:monotone}
\essinf_{\bx\in \Rea^3} \Big[ \Pi_{\epsilon}((1+h)\bx)- \Pi_{\epsilon}(\bx)\Big] \succeq 0.
\quad
\essinf_{\bx\in \Rea^3} \Big[ \Pi_{\mu}((1+h)\bx)- \Pi_{\mu}(\bx)\Big] \succeq 0.
\eeq
Then the solution of\eqref{eq:first_order}--\eqref{eq:intro_SM} exists, is unique, and satisfies the bound \eqref{eq:thm:BoundSmooth} 
with $\epsilonconstant=\muconstant=1$, $\epsilonmax=\epz$ and $\mumax=\mu$; i.e., 
\begin{equation*}
\N{\bE }^2_{L^2(B_R;\epsilon)} + \N{\bH}^2_{L^2(B_R;\mu)} 
\le
2
\max\big\{ 16R^2\epz\muz,\omega^{-2}\big\}\big(\N{\bJ}_{L^2(B_R;\epsilon^{-1})}^2+\N{\bK}_{L^2(B_R;\mu^{-1})}^2\big)
\end{equation*}
(with $16$ reducing to $8$ if either $\bK=\bzero$ or $\bJ=\bzero$).
\end{theorem}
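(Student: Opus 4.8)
The plan is to deduce Theorem~\ref{thm:BoundRough} from Theorem~\ref{thm:BoundSmooth} by approximating the rough coefficients by $W^{1,\infty}$ coefficients that are still radially non-decreasing (so that their constants $\gamma_\epsilon,\gamma_\mu$ equal $1$) and still equal $\epz\MI$, $\muz\MI$ outside a fixed ball compactly contained in $B_R$; one then applies Theorem~\ref{thm:BoundSmooth}, obtains bounds uniform in the approximation parameter, and passes to the limit. Note first that the hypotheses already force $\epsilonmin\MI\preceq\epsilon(\bx)\preceq\epz\MI$ and $\mumin\MI\preceq\mu(\bx)\preceq\muz\MI$ for a.e.\ $\bx$: given $\bx$, choose $h$ so large that $(1+h)\bx\notin B_R$; then \eqref{eq:monotone} and \eqref{eq:support_scatterer} give $\epsilon(\bx)\preceq\epsilon((1+h)\bx)=\epz\MI$, and similarly for $\mu$. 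This is why the right-hand side can be written with $\epsmax=\epz$, $\mumax=\muz$.

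For $\delta>0$ I would build $\epsilon_\delta$ (and $\mu_\delta$ in the same way) from $\Pi_\epsilon$ in three steps, each of which preserves both the radial monotonicity \eqref{eq:monotone} and the value $(\epz-\epsilonmin)\MI$ for $|\bx|$ large: \emph{(i)} mollify in the angular variable, at each fixed radius, against a smooth probability kernel on $\sph^2$ — this is an average with non-negative weights, so monotonicity and the far-field value survive, and the field becomes $C^\infty$ on each sphere; \emph{(ii)} mollify radially by an outward dilation, replacing $\Pi(\bx)$ by $\int_1^{1+\delta}\Pi(s\bx)\,\eta_\delta(s)\di s$ with $\eta_\delta$ a mollifier supported in $(1,1+\delta)$ — since $s(1+h)\bx=(1+h)(s\bx)$ monotonicity survives, the outward dilation leaves the value for $|\bx|$ large unchanged, and the field becomes $C^\infty$ away from the origin; \emph{(iii)} multiply by a smooth, radially non-decreasing cut-off $\chi_\rho$ that vanishes on $B_{\rho/2}$ and equals $1$ outside $B_\rho$ — since the field from (ii) is $\succeq\uzero$ and radially non-decreasing and $\chi_\rho\ge0$ is radially non-decreasing, the product is again radially non-decreasing, and it now vanishes near the origin. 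Adding $\epsilonmin\MI$ to the outcome of (iii) yields $\epsilon_\delta\in W^{1,\infty}(\Rea^3;\SPD)$ with $\gamma_{\epsilon_\delta}=1$, $\epsilonmin\MI\preceq\epsilon_\delta\preceq\epz\MI$, $\epsilon_\delta=\epz\MI$ outside a ball whose closure is compactly contained in $B_R$ and independent of $\delta$, and $\epsilon_\delta\to\epsilon$ a.e.\ as $\delta\to0$ (with $\rho\to0$ accordingly). Its Lipschitz constant blows up as $\rho\to0$, but \eqref{eq:thm:BoundSmooth} does not involve $\|\nabla\epsilon_\delta\|_{L^\infty}$, so this does no harm.

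By Theorem~\ref{thm:BoundSmooth} (applied with radius $R$), the solution $(\bE_\delta,\bH_\delta)$ associated with $(\epsilon_\delta,\mu_\delta)$ exists and satisfies \eqref{eq:thm:BoundSmooth} with $\gamma_{\epsilon_\delta}=\gamma_{\mu_\delta}=1$, $\epsmax=\epz$, $\mumax=\muz$; the prefactor $2\max\{16R^2\epz\muz,\omega^{-2}\}$ is independent of $\delta$, while $\|\bJ\|_{L^2(B_R;\epsilon_\delta^{-1})}^2\to\|\bJ\|_{L^2(B_R;\epsilon^{-1})}^2$ and $\|\bK\|_{L^2(B_R;\mu_\delta^{-1})}^2\to\|\bK\|_{L^2(B_R;\mu^{-1})}^2$ by dominated convergence ($\epsilon_\delta^{-1}\to\epsilon^{-1}$ a.e.\ with $\epsilon_\delta^{-1}\preceq\epsilonmin^{-1}\MI$). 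Hence $\bE_\delta,\bH_\delta$ are bounded in $L^2(B_R)$, so by \eqref{eq:first_order} in $H(\curl;B_R)$, and — propagating through the region where the coefficients are constant via continuity of the exterior radiating solution operator — in $H(\curl;K)$ for every compact $K$. Passing to a subsequence with $\bE_\delta\rightharpoonup\bE$, $\bH_\delta\rightharpoonup\bH$ weakly in $H_{\rm loc}(\curl;\Rea^3)$, and using that $\epsilon_\delta\to\epsilon$ boundedly a.e.\ so that $\epsilon_\delta\bE_\delta\rightharpoonup\epsilon\bE$ in $L^2_{\rm loc}$, the limit $(\bE,\bH)$ solves \eqref{eq:first_order}; and being, in $\{|\bx|>R\}$, a weak limit of radiating fields with the same constant coefficients, it satisfies \eqref{eq:intro_SM}. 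For the bound, note that for every $\bphi\in L^2(B_R)^3$, $\|\bE_\delta\|_{L^2(B_R;\epsilon_\delta)}^2\ge 2\Re(\epsilon_\delta\bE_\delta,\bphi)_{L^2(B_R)}-(\epsilon_\delta\bphi,\bphi)_{L^2(B_R)}$; letting $\delta\to0$ and then taking the supremum over $\bphi$ gives $\liminf_\delta\|\bE_\delta\|_{L^2(B_R;\epsilon_\delta)}^2\ge\|\bE\|_{L^2(B_R;\epsilon)}^2$, and likewise for $\bH$, so $(\bE,\bH)$ satisfies the bound of Theorem~\ref{thm:BoundRough}. For uniqueness, the difference of two solutions solves \eqref{eq:first_order}--\eqref{eq:intro_SM} with zero data; the a priori estimate holds for \emph{every} solution of the rough problem — obtained by running the Rellich-type identity behind Theorem~\ref{thm:BoundSmooth} directly for $L^\infty$ coefficients, with the radial derivative $\bx\cdot\nabla$ replaced by the dilation (finite-difference) operator on which \eqref{eq:monotone} is built — so this difference vanishes.

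The main obstacle is the construction of the approximations near the origin: a genuinely radially varying monotone coefficient cannot be made Lipschitz up to $\bx=\uzero$ while staying monotone and matching $\epsilon$, since in polar coordinates the angular part of the Jacobian forces $|\nabla\epsilon_\delta|\sim|\bx|^{-1}$; freezing $\epsilon_\delta$ to $\epsilonmin\MI$ on a shrinking ball (step (iii)) removes this singularity at the cost of a $\delta$-dependent Lipschitz constant that Theorem~\ref{thm:BoundSmooth} tolerates — which is exactly why the hypotheses of Theorem~\ref{thm:BoundRough} are slightly stronger than the ``$1/r$'' form mentioned after Theorem~\ref{thm:BoundSmooth}. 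Verifying that each of the three operations preserves both \eqref{eq:monotone} and the far-field value, and establishing the a priori estimate for arbitrary rough solutions required for uniqueness, are where the real work lies.
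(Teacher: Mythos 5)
Your strategy --- mollify the coefficients, solve the smooth problems, and pass to a weak limit --- is a genuinely different route from the paper's, and it leaves a gap at uniqueness. The paper instead takes an \emph{arbitrary} solution $(\bE,\bH)$ of the rough problem, smooths it to $(\bE_\eta,\bH_\eta)$ agreeing with $(\bE,\bH)$ near $\partial B_R$, observes that $(\bE_\eta,\bH_\eta)$ satisfies the Maxwell system with coefficients $(\epsilon_\delta,\mu_\delta)$ and perturbed sources \eqref{eq:PDEperturb1}--\eqref{eq:PDEperturb2}, applies the bound of Theorem~\ref{thm:BoundSmooth} to this perturbed problem, and controls the perturbation errors by taking $\eta$ and then $\delta$ small. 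This delivers the a priori estimate for \emph{every} solution, so uniqueness is immediate, and existence then follows from the Fredholm theory of \cite{PiWeWi01}. Your weak-limit argument manufactures one particular solution satisfying the bound, which gives existence but not an a priori estimate for an arbitrary solution --- and that is precisely what uniqueness needs. You acknowledge this and propose to recover the estimate by running the Rellich/Morawetz identity ``directly for $L^\infty$ coefficients, with $\bx\cdot\nabla$ replaced by the dilation operator''; but this is not carried out, it is not a small technical step (if it worked, the entire smoothing machinery would be superfluous), and the paper emphasises that the coefficients of Example~\ref{ex:3} lie outside the reach of all existing unique-continuation results, so there is no off-the-shelf way to fill this hole. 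The existence half also needs tightening: to extract a weak limit in $H_{\rm loc}(\curl;\IR^3)$ you must first establish a uniform $H(\curl;K)$ bound on compact $K\supsetneq\overline{B_R}$; the phrase ``propagating through the region where the coefficients are constant'' should be justified via the Stratton--Chu representation of $(\bE_\delta,\bH_\delta)$ in $\{|\bx|>R\}$ together with a trace estimate on $\partial B_R$ obtained from the $H(\curl;B_R)$ control, and similarly for passing the radiation condition to the limit.

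That said, several of your ingredients are sound and in some respects neater than the paper's. Your mollification --- angular averaging on $\sph^2$, then a radial dilation average $\int_1^{1+\delta}\Pi(s\bx)\,\eta_\delta(s)\,\di s$, then a radially non-decreasing cut-off to $\epsilonmin\MI$ near the origin --- is a correct alternative to Lemma~\ref{lem:mollifier}, and the $\sph^2$ step cleanly avoids the polar-axis singularity that the paper has to handle by freezing the field on cones around the axis. The weak lower-semicontinuity estimate $\|\bE_\delta\|_{L^2(B_R;\epsilon_\delta)}^2 \ge 2\Re(\epsilon_\delta\bE_\delta,\bphi)_{L^2(B_R)}-(\epsilon_\delta\bphi,\bphi)_{L^2(B_R)}$ to pass the $\delta$-uniform bound to the limit is a nice device, and your opening observation that \eqref{eq:monotone} together with \eqref{eq:support_scatterer} already forces $\epsilonmin\MI\preceq\epsilon\preceq\epz\MI$ a.e.\ is correct (the paper records this fact without proof).
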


The Helmholtz analogues of Theorems \ref{thm:BoundSmooth} and \ref{thm:BoundRough} are \cite[Theorem~2.5]{GrPeSp:18} and \cite[Theorem~2.7]{GrPeSp:18}, respectively, with the analogous Helmholtz result about transmission through a penetrable obstacle \cite[Theorem~3.1]{MS17}. The earlier papers \cite{Bl:73, BlKa:77, PeVe:99} prove similar Helmholtz bounds under assumptions on the coefficients similar to \eqref{eq:GrowthCoeff}; see \cite[Page 311]{Bl:73}, \cite[Equation 4.3]{BlKa:77}, \cite[Equation 1.8]{PeVe:99}.

\subsection{Examples of \texorpdfstring{$\epsilon$ and $\mu$}{epsilon and mu} satisfying \texorpdfstring{\eqref{eq:RoughCoefficients}}{the monotonicity condition}}

\begin{example}\mythmname{Transmission by a penetrable $C^0$ star-shaped obstacle}
\label{ex:1}
$\domain_-$ is a $C^0$ bounded open set that is star-shaped with respect to the origin (i.e., the segment $[\bzero,\bx]\subset \domain_-$ for all $\bx\in \domain_-$), 
\beqs
\epsilon(\bx) = 
\left(\epsilon_-\boldsymbol{1}_{\domain_-}(\bx)+ \epsilon_0\boldsymbol{1}_{\Rea^d\setminus\overline{\domain_-}}(\bx) \right)\MI
\quad\text{ and }
\quad
\mu(\bx) = 
\left(\mu_-\boldsymbol{1}_{\domain_-}(\bx)+ \mu_0\boldsymbol{1}_{\Rea^d\setminus\overline{\domain_-}}(\bx) \right)\MI 
\eeqs
where $\epsilon_-, \epsilon_0,\mu_-, \mu_0$ are positive real numbers satisfying
\beq\label{eq:jump}
\epsilon_-\leq \epsilon_0
\quad\tand\quad
\mu_-\leq \mu_0 
\eeq
\end{example}

Observe that the condition \eqref{eq:jump} implies that $\epsilon_-\mu_- \leq \epsilon_0 \mu_0$, which implies that $(\epsilon_0 \mu_0)^{-1/2}\leq (\epsilon_- \mu_-)^{-1/2}$; i.e., the wave speed outside $\domain_-$ is smaller than the wave speed inside.

Rotating the 2-d domain in Figure \ref{fig:1} around the vertical axis through the origin gives a 3-d domain $\Omega_-$ satisfying the conditions in Example \ref{ex:1}. This example therefore includes domains with inner and outer cusps.

A key feature of Theorem \ref{thm:BoundRough} for $\epsilon,\mu$ as in Example \ref{ex:1} is that the bound \eqref{eq:thm:BoundSmooth} is then uniform across all such penetrable obstacles; \S\ref{sec:UQ} below discusses one important application of this feature in the theory of uncertainty quantification for the time-harmonic Maxwell equations.

\begin{example}\mythmname{Transmission by a penetrable obstacle with self-intersecting surface}
\label{ex:2}
$\domain_1,\, \domain_2,$ and $\domain_3$ are the 2-d domains in Figure \ref{fig:2} rotated around the vertical axis through the origin, and 
$\epsilon|_{\Omega_j}=\epsilon_j\MI$ for $j\in\{1,2,3\}$,
where $0<\epsilon_1\leq \epsilon_2\leq \epsilon_3=\epsilon_0$, and similarly for $\mu$.
\end{example}

\begin{example}\mythmname{Transmission by a penetrable obstacle with infinitely many components accumulating towards a bounded limit surface}
\label{ex:3}
$\epsilon|_{B_{j/(j+1)}\setminus B_{(j-1)/j}}=\epsilon_j$ for $j\in\IN$ and $\epsilon|_{\IR^3\setminus B_1}=\epz$ 
where
$\epsilon_1\leq \ldots \leq \epsilon_j \leq \epsilon_{j+1}\leq \ldots \leq \epsilon_0$, and similarly for $\mu$.
\end{example}

Figure \ref{fig:3} plots a 2-d cross section of the domains on which $\epsilon$ and $\mu$ are constant in Example \ref{ex:3}.%

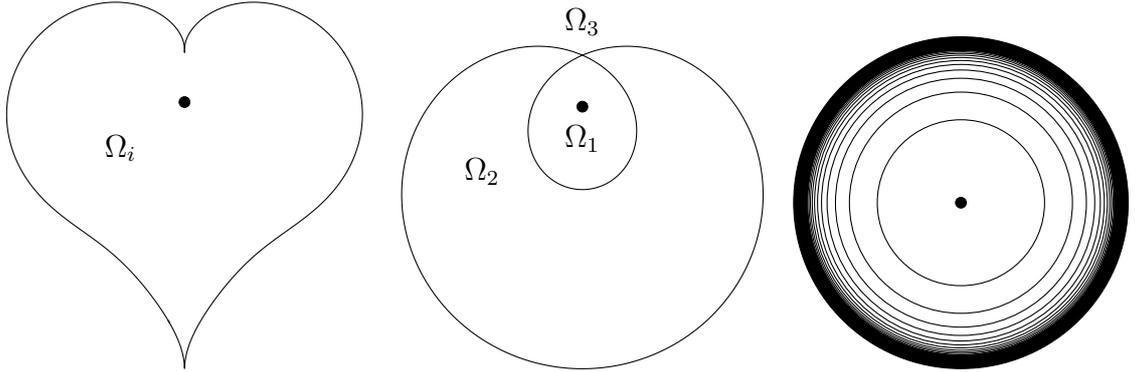
\begin{figure}
  \centering
  \subfloat[A 2-d cross section of an admissible $\domain_-$ in Example \ref{ex:1}]
  {
\begin{tikzpicture}[scale=2.1,inner sep=0.5mm]
\draw[domain = 1.56:1.58,smooth,variable=\t,samples=100] % One cusp
plot ({\t r}:{(2 - 2 * sin(\t r) + sin(\t r)*sqrt(abs(cos(\t r)))/(sin(\t r)+1.4))/2}); % From http://mathworld.wolfram.com/HeartCurve.html 
\draw[domain = 4.65:4.75,smooth,variable=\t,samples=100] % Another cusp
plot ({\t r}:{(2 - 2 * sin(\t r) + sin(\t r)*sqrt(abs(cos(\t r)))/(sin(\t r)+1.4))/2}); % From http://mathworld.wolfram.com/HeartCurve.html 
\draw[domain = 0:1.56,smooth,variable=\t,samples=200] % A nice bit
plot ({\t r}:{(2 - 2 * sin(\t r) + sin(\t r)*sqrt(abs(cos(\t r)))/(sin(\t r)+1.4))/2}); % From http://mathworld.wolfram.com/HeartCurve.html 
\draw[domain = 1.58:4.66,smooth,variable=\t,samples=200] % Another nice bit
plot ({\t r}:{(2 - 2 * sin(\t r) + sin(\t r)*sqrt(abs(cos(\t r)))/(sin(\t r)+1.4))/2}); % From http://mathworld.wolfram.com/HeartCurve.html 
\draw[domain = 4.749:6.29,smooth,variable=\t,samples=200] % Another nice bit
plot ({\t r}:{(2 - 2 * sin(\t r) + sin(\t r)*sqrt(abs(cos(\t r)))/(sin(\t r)+1.4))/2}); % From http://mathworld.wolfram.com/HeartCurve.html 
\node at (0,-0.31) [circle,draw,fill=black] {};
\node at (-0.4,-0.6) {{\large $\domain_i$}};
\end{tikzpicture}     \label{fig:1}  }  \hfill
  \subfloat[A 2-d cross section of the subdomains on which $\epsilon$ and $\mu$ are constant in Example \ref{ex:2}.]
  {
\begin{tikzpicture}[scale=2.2,inner sep=0.5mm]
\draw[domain = 0:6.2832,smooth,variable=\t,samples=400]plot ({\t r}:{(0.4-sin(\t r))*1.35});
\node at (0,-0.31) [circle,draw,fill=black] {};
\node at (0,-0.5) {{\large $\domain_1$}}; %was (0,-0.2) in Owen's original diagram
\node at (-0.6,-0.7) {{\large $\domain_2$}};
\node at (0,0.2) {{\large $\domain_3$}};
\end{tikzpicture}    \label{fig:2}  }  \hfill
  \subfloat[A 2-d cross section of the subdomains on which $\epsilon$ and $\mu$ are constant in Example \ref{ex:3}.]
  {
\begin{tikzpicture}[scale=2.2,inner sep=0.5mm]
\fill[fill=black] (0,0) circle [radius =1];
\fill[fill=white] (0,0) circle [radius =11/12];
\foreach \x in {1,2,...,11}
\draw (0,0) circle [radius = (\x/(\x+1))];
\draw (0,0) circle [radius = 1];
% Add the origin
\node at (0,0) [circle,draw,fill=black] {};
\end{tikzpicture}   \label{fig:3}  }
\caption{2-d cross sections of domains on which $\epsilon$ and $\mu$ are constant in Examples \ref{ex:1}--\ref{ex:3}; in all three, the black dot marks the origin.}
\end{figure}

\subsection{Discussion of Theorems \ref{thm:BoundSmooth} and \ref{thm:BoundRough} and their novelty}

\subsubsection{The \texorpdfstring{$\wn$}{omega}-dependence of the bounds}

The $\wn$-dependence of the bound \eqref{eq:thm:BoundSmooth} is the same as in the sharp bound on the solution to \eqref{eq:first_order}--\eqref{eq:intro_SM} when $\mu=\mu_0\MI$ and $\epsilon=\epsilon_0\MI$. A
simple way to see this sharpness is to 
let $\bE = \chi \bE^I$ and $\bH= \chi \bH^I$ where $\chi \in C^\infty_{\rm comp}(B_R)$ and $\bE^I$ and $\bH^I$ are the 
plane-wave solutions
\beq\label{eq:plane_wave}
\bE^I(\bx)=\sqrt{\muz}\bA\re^{\ri\wn\sqrt{\epz\muz}\bd\cdot\bx} \quad\tand\quad
\bH^I=\sqrt{\epz}\bd\times\bA\re^{\ri\wn\sqrt{\epz\muz}\bd\cdot\bx}
\eeq
with $|\bA|=|\bd|=1$ and $\bA\cdot\bd=0$.
Then $(\bE,\bH)$ is solution of the Maxwell problem \eqref{eq:first_order}--\eqref{eq:intro_SM} with 
$\bJ=\nabla\chi\times\bH^I$ and $\bK=\nabla\chi\times\bE^I$,
and the $L^2$ norms of the solutions and sources are all independent of $\wn$:
\begin{align*}
\muz^{-1/2}\N{\bE}_{L^2(\IR^3)}
&=\epz^{-1/2}\N{\bH}_{L^2(\IR^3)}
=\N{\chi}_{L^2(\IR^3)},\\
\N{\bJ}_{L^2(\IR^3)}&=\epz^{1/2}\N{\nabla\chi(\bx)\times(\bd\times\bA)}_{L^2(\IR^3)},
\qquad
\N{\bK}_{L^2(\IR^3)}=\muz^{1/2}\N{\nabla\chi(\bx)\times\bA}_{L^2(\IR^3)}.
\end{align*}

These plane-wave solutions also show that the dependence on $R$ in the bound \eqref{eq:thm:BoundSmooth} is sharp and that this bound cannot be improved, in general, by a factor larger than  $32\pi^2\approx316$; indeed, 
fix $R>0$ and define $\chi\in C^{0,1}(\IR^3)$ by 
$\chi(\bx):=R\sin(\pi|\bx|/R) (\pi|\bx|)^{-1}$ if $|\bx|<R$ and $\chi(\bx):=0$ otherwise.
$\chi$ is the first Laplace--Dirichlet eigenfunction of the ball (the spherical Bessel function $j_0(\frac\pi R|\bx|)$): $\Delta \chi+\frac{\pi^2}{R^2}\chi=0$ in $B_R$ and $\chi=0$ on $\deB_R$, so $\N{\nabla\chi}_{B_R}^2=(\pi/R)^2\N{\chi}_{B_R}^2$.
Choosing $\bE,\bH$ as the cut-off plane waves above, we obtain a solution of the constant-coefficient problem \eqref{eq:first_order}--\eqref{eq:intro_SM} with 
\begin{align*}
32\epz\muz R^2
&\overset{\eqref{eq:thm:BoundSmooth}}\ge\frac{\N{\bE}_{L^2(B_R;\epsilon)}^2+\|\bH\|_{L^2(B_R;\mu)}^2}{\|\bK\|_{L^2(B_R;\mu^{-1})}^2+\|\bJ\|_{L^2(B_R;\epsilon^{-1})}^2}
\\&
=\frac{2\epz\muz\N{\chi}_{L^2(B_R)}^2}{\N{\nabla\chi\times(\bd\times\bA)}_{L^2(B_R)}^2+\N{\nabla\chi\times\bA}_{L^2(B_R)}^2}
\ge\frac{\epz\muz\N{\chi}_{B_R}^2}{\N{\nabla\chi}_{B_R}^2}
=\frac{\epz\muz R^2}{\pi^2}.
\end{align*}

It is well-known that the behaviour of solutions of the time-harmonic Maxwell equations in the limit $\wn\to\infty$ is dictated by the behaviour of the geometric optic rays, and the conditions on $\epsilon$ and $\mu$ in \eqref{eq:GrowthCoeff} ensure that all the rays starting in a neighbourhood of $\supp(\epsilon-\epsilon_0\MI)\cup\supp(\mu-\mu_0\MI)$ escape from that neighbourhood in a uniform time -- see, e.g, \cite[\S7]{GrPeSp:18} -- i.e., the problem is \emph{nontrapping}. When $\epsilon$ and $\mu$ correspond to transmission through a penetrable obstacle (as in Example \ref{ex:1}), the conditions \eqref{eq:jump} imply that the wave speed inside the obstacle is larger than the wave speed outside the obstacle, ruling out total internal reflection, and thus ruling out trapped rays.

Obtaining $\omega$-explicit bounds on solutions of the 
Helmholtz equation 
\beq\label{eq:Helmholtz}
\nabla\cdot(A\nabla u)+\omega^{2} n u =-f
\eeq
under the nontrapping hypothesis is a classic topic. These bounds then imply results about the location of resonances and the local-energy decay of solutions of the corresponding wave equation; see the overview in \cite[\S4.6]{DyZw:19}.
For certain nontrapping geometries and coefficients, such $\wn$-explicit bounds can be obtained by multiplying the PDE by a carefully-chosen test function and integrating by parts \cite{MoLu:68, Mo:75, Bl:73, BlKa:77, PeVe:99}; this is the method we use in the Maxwell case -- see the discussion in \S\ref{sec:overview} below. 
For smooth geometries and coefficients, the propagation-of-singularities results of \cite{MeSj:78, MeSj:82} and the parametrix argument of \cite{Va:75} prove the sharp bound on the solution under the general nontrapping hypothesis (see the recent presentation in \cite[Theorem 4.43]{DyZw:19}).
For Helmholtz transmission problems, i.e., \eqref{eq:Helmholtz} with discontinuous $A$ and $n$,
the propagation-of-singularities results are much more complicated, and $\wn$-explicit bounds on the Helmholtz transmission problem proved using propagation of singularities only exist for smooth obstacles with strictly positive curvature; see \cite{CaPoVo:99}.

For the Maxwell equations, the propagation-of-singularities results analogous to \cite{MeSj:78,MeSj:82} were proved in \cite{Ya:88}
for the constant-coefficient Maxwell equations in the exterior of a perfectly-conducting obstacle. To the best of our knowledge, there do not yet exist corresponding results for the Maxwell transmission problem.

As stated above, the bounds in the present paper are proved by multiplying the PDE by a carefully-chosen test function and integrating by parts
(see the discussion in \S\ref{sec:overview} below). Perhaps surprisingly, the present paper appears to be the first time this technique has been applied to the time-harmonic Maxwell transmission problem, and Theorem \ref{thm:BoundRough} therefore contains the first $\wn$-explicit bounds on the solution of this problem.

We highlight that if the monotonicity conditions \eqref{eq:GrowthCoeff} or \eqref{eq:jump} on $\epsilon$ and $\mu$ are violated then the Maxwell solution operator can grow exponentially through a sequence of $\wn$s; this is proved for the Helmholtz solution operator in \cite{Ra:71} (for smooth coefficients) and \cite{PoVo:99, Cap12}, \cite[Chapter 5]{AC16} 
(for discontinuous coefficients such that the wave speed outside is higher than the wave speed inside).

\subsubsection{The novelty of the well-posedness result in Theorem \ref{thm:BoundRough}}

The class of coefficients for which existence and uniqueness of the Maxwell solution is proved in Theorem \ref{thm:BoundRough} contains configurations for which existence and uniqueness of the Maxwell solution had not yet been established. Indeed, 
the general arguments of \cite{PiWeWi01} prove existence of a solution to \eqref{eq:first_order}--\eqref{eq:intro_SM} for $\bJ,\bK\in  L^2_{\rm comp}(\IR^3)$
once uniqueness is established; see \cite[Theorem 2.10]{PiWeWi01}. 
The Baire-category argument of \cite{BCT12} uses the fact that a UCP is known for the time-harmonic Maxwell system with Lipschitz coefficients \cite{NgWa:12} to prove uniqueness of the solution of the time-harmonic Maxwell problem posed in $\Rea^3$ with \emph{piecewise}-Lipschitz coefficients, provided that the subdomains on which the coefficients are defined 
satisfy \cite[Assumption 1]{BCT12}/\!\!\cite[(i)--(iii) in statement of Proposition 2.11]{LRX16}.
This assumption allows a large class of subdomains (including any bounded finite collection), but does not allow the subdomain boundaries to concentrate ``from below" on a bounded surface in $\Rea^3$, and thus the subdomains corresponding to Example \ref{ex:3} are ruled out;
 see \cite[Figure 1]{BCT12} (in the notation of \cite[Proposition 2.13]{LRX16}, $\partial B_1(\bzero)\subset C$ and thus $\Rea^3\setminus C$ is not connected).
Theorem \ref{thm:BoundRough} is therefore the first time existence and uniqueness of the Maxwell solution with $\epsilon,\mu$ as in Example \ref{ex:3} has been proved.

\subsubsection{Uniformity of the bounds with respect to the penetrable obstacle and implications for uncertainty quantification}\label{sec:UQ}

In a PDE context, uncertainty quantification consists of theory and algorithms for computing statistics of quantities of interest involving PDEs \emph{either} posed on a random domain \emph{or} having random coefficients. The bounds in the present paper are relevant for UQ of the time-harmonic Maxwell equations in two ways.

First, when $\epsilon$ and $\mu$ are as in Example \ref{ex:1}, i.e., correspond to the transmission problem by a penetrable $C^0$ obstacle, the coefficients in the bound \eqref{eq:thm:BoundSmooth} are uniform across all such obstacles.
The result analogous to this for the Helmholtz equation was proved in \cite[Theorem 3.1]{MS17}/\cite[Theorem 2.7]{GrPeSp:18}, and is used in  \cite{HiScSp:22} as the basis of a frequency-explicit analysis of UQ algorithms for the high-frequency Helmholtz transmission problem (following the analysis at fixed frequency in \cite{HiScScSc:18}). 
Theorem \ref{thm:BoundRough} can therefore form the basis of the Maxwell analogue of \cite{HiScSp:22}.

Second, the bounds on the Helmholtz equation in \cite{GrPeSp:18} that are explicit in properties of the coefficients were used in \cite{PeSp:18} to prove the first well-posedness results about the Helmholtz equation with random coefficients (see \cite[\S1.1]{PeSp:18}). Inputting the bounds from Theorem \ref{thm:BoundRough} into the general framework of \cite[\S2]{PeSp:18}, one can obtain the analogous results for the time-harmonic Maxwell equations.

\subsubsection{Application to the time-harmonic Maxwell scattering problem}

Given $\epsilon,\mu$ as in \S\ref{sec:set_up} and $\bE^I, \bH^I$ satisfying
\beqs
\ri\wn\epsilon_0\bE^I+\nabla\times\bH^I=\bzero \quad\tand\quad
-\ri\wn\mu_0\bH^I+\nabla\times\bE^I=\bzero
\quad\tin \Rea^3,
\eeqs
such as the plane-wave solutions \eqref{eq:plane_wave}, the scattering problem is:~find $\bE^T$ (the total field) satisfying 
\beqs
\ri\wn\epsilon\bE^T+\nabla\times\bH^T=\bzero \quad\tand\quad
-\ri\wn\mu\bH^T+\nabla\times\bE^T=\bzero
\eeqs
and such that the scattered fields $\bE^S:= \bE^T - \bE^I$, $\bH^S:=\bH^T- \bH^I$ satisfy the Silver--M\"uller radiation condition \eqref{eq:intro_SM}.
Theorems \ref{thm:BoundSmooth} and \ref{thm:BoundRough} imply the follow result about the scattering problem.

\begin{cor}\mythmname{Bound on the total field in the scattering problem} \label{cor:scat}
Suppose that $\epsilon$ and $\mu$ satisfy the assumptions of \emph{either} Theorem \ref{thm:BoundSmooth} \emph{or} Theorem \ref{thm:BoundRough}.
Then the solution $\bE^T, \bH^T$ of the scattering problem exists and is unique. 
Let $\Rscat:= \min\{ R : \supp(\epsilon-\epsilon_0\MI)\cup \supp(\mu-\mu_0\MI)\subset B_R\}$. Given $R>\Rscat$, 
\begin{align}\nonumber
&\epsilonconstant\big\|\bE^T\big\|^2_{L^2(B_R;\epsilon)} + \muconstant\big\|\bH^T\big\|^2_{L^2(B_R;\mu)} \\ \nonumber
&\le 
2 \epsilonconstant \big\|\bE^I\big\|^2_{L^2(B_R;\epsilon)} + 2 \muconstant\big\|\bH^I\big\|^2_{L^2(B_R;\mu)} 
\\ \nonumber
&\qquad+\frac{4}{(R-\Rscat)^2}
\bigg(
\max\left\{ 8R^2 \left(\frac{\epsilonmax\mumax}{\muconstant} + \frac{\epsilon_0 \mu_0}{\epsilonconstant}\right), \frac{\epsilonconstant}{\omega^2}\right\}
\N{ \bH^I }_{L^2(B_R\setminus B_{\Rscat};\epsilon^{-1})}^2\\
&\hspace{3.5cm}+\max\left\{ 8R^2 \left(\frac{\epsilonmax\mumax}{\epsilonconstant} + \frac{\epsilon_0 \mu_0}{\muconstant}\right), \frac{\muconstant}{\omega^2}\right\}
\N{ \bE^I}_{L^2(B_R\setminus B_{\Rscat};\mu^{-1})}^2\bigg);
\label{eq:BoundScat}
\end{align}
i.e., the total fields are bounded (uniformly in $\wn$) in terms of the incident fields.
\end{cor}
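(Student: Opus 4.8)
The plan is to reduce the scattering problem to the source problem of Theorems \ref{thm:BoundSmooth}/\ref{thm:BoundRough} by a cut-off argument, exactly as in the Helmholtz case. First I would introduce a smooth radial cut-off $\chi\in C^\infty(\Rea^3)$ with $\chi\equiv 0$ on $B_{\Rscat}$ and $\chi\equiv 1$ outside $B_R$, and set $\bE := \bE^T - \chi\bE^I$, $\bH := \bH^T - \chi\bH^I$. Since $\bE^I,\bH^I$ solve the homogeneous constant-coefficient Maxwell system, a direct computation using the product rule for $\nabla\times(\chi\bF) = \chi\nabla\times\bF + \nabla\chi\times\bF$ shows that $(\bE,\bH)$ solves \eqref{eq:first_order} with right-hand sides $\bJ = -\nabla\chi\times\bH^I$ and $\bK = -\nabla\chi\times\bE^I$, supported in $\overline{B_R}\setminus B_{\Rscat}$ (where $\epsilon=\epsilon_0\MI$, $\mu=\mu_0\MI$, so the weighted norms there are just constant multiples of the $L^2$ norm). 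Moreover $\bE^S - \chi\bE^I = \bE^T - \bE^I$ on $B_{\Rscat}^c$ eventually equals $\bE^S$ outside $B_R$, so $(\bE,\bH)$ satisfies the Silver--Müller condition \eqref{eq:intro_SM}; hence Theorem \ref{thm:BoundSmooth} (or \ref{thm:BoundRough}) applies and gives existence, uniqueness, and the bound \eqref{eq:thm:BoundSmooth} for $(\bE,\bH)$.

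Next I would convert the bound on $(\bE,\bH)$ into one on $(\bE^T,\bH^T)$. Writing $\bE^T = \bE + \chi\bE^I$ and using the triangle inequality together with $(a+b)^2\le 2a^2+2b^2$ in the weighted norms $\N{\cdot}_{L^2(B_R;\epsilon)}$, $\N{\cdot}_{L^2(B_R;\mu)}$, and bounding $|\chi|\le 1$, I get
\[
\epsilonconstant\N{\bE^T}_{L^2(B_R;\epsilon)}^2 + \muconstant\N{\bH^T}_{L^2(B_R;\mu)}^2
\le 2\epsilonconstant\N{\bE}_{L^2(B_R;\epsilon)}^2 + 2\muconstant\N{\bH}_{L^2(B_R;\mu)}^2
+ 2\epsilonconstant\N{\bE^I}_{L^2(B_R;\epsilon)}^2 + 2\muconstant\N{\bH^I}_{L^2(B_R;\mu)}^2.
\]
The middle two terms are controlled by \eqref{eq:thm:BoundSmooth} applied to $(\bE,\bH)$, in which the source norms are $\N{\bJ}_{L^2(B_R;\epsilon^{-1})}^2 = \epsilon_0^{-1}\N{\nabla\chi\times\bH^I}_{L^2(B_R\setminus B_{\Rscat})}^2$ and similarly for $\bK$. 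Here the factor $2$ in front of $2\epsilonconstant\N{\bE}^2+\dots$ combines with the $2$ already present in \eqref{eq:thm:BoundSmooth} to produce the $4$ appearing in \eqref{eq:BoundScat}.

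The one remaining quantitative point — and the only place any care is needed — is the choice of $\chi$ to produce the factor $(R-\Rscat)^{-2}$. Since $\nabla\chi$ is a radial function supported in the annulus $B_R\setminus B_{\Rscat}$ of width $R-\Rscat$, one can take $\chi$ depending on $|\bx|$ with $|\nabla\chi|\le (R-\Rscat)^{-1}$ pointwise (e.g.\ $\chi(\bx) = \psi((|\bx|-\Rscat)/(R-\Rscat))$ for a fixed smooth $\psi:[0,1]\to[0,1]$ with $\psi(0)=0$, $\psi(1)=1$, $|\psi'|\le 1$; strictly one needs $\sup|\psi'|$ to be exactly $1$, achieved in the limit, so the stated constant is the infimum over such cut-offs, consistent with how the analogous Helmholtz corollary is phrased). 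Then $|\nabla\chi\times\bH^I|\le |\nabla\chi|\,|\bH^I| \le (R-\Rscat)^{-1}|\bH^I|$, giving $\N{\bJ}_{L^2(B_R;\epsilon^{-1})}^2 \le (R-\Rscat)^{-2}\N{\bH^I}_{L^2(B_R\setminus B_{\Rscat};\epsilon^{-1})}^2$ since $\epsilon=\epsilon_0\MI$ on the annulus. Substituting these two estimates into the source terms of \eqref{eq:thm:BoundSmooth} yields exactly \eqref{eq:BoundScat}. I do not expect any genuine obstacle here: every step is a cut-off plus triangle inequality, and the main results do all the analytic work; the only thing to watch is bookkeeping of the constants (the doubling factors and the sharp gradient bound on $\chi$) so that they match the displayed right-hand side.
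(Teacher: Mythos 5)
Your argument is correct and is essentially the paper's: you decompose the total field with a cut-off, observe that the remainder solves \eqref{eq:first_order} with sources $\pm\nabla\chi\times\bH^I$, $\pm\nabla\chi\times\bE^I$ supported in the annulus (where $\epsilon=\epsilon_0\MI$, $\mu=\mu_0\MI$), apply Theorem \ref{thm:BoundSmooth}/\ref{thm:BoundRough}, and then use $(a+b)^2\le2a^2+2b^2$ to transfer the bound to $(\bE^T,\bH^T)$; your constant bookkeeping (the $2\times2=4$) matches \eqref{eq:BoundScat}. The one place the paper is slightly cleaner than your write-up is the cut-off: rather than taking smooth $\chi$ with $|\nabla\chi|\le(R-\Rscat)^{-1}$ only in a limit, the paper uses the Lipschitz, piecewise-linear $\chi(\bx)=\max\{0,\min\{1,\tfrac{R-|\bx|}{R-\Rscat}\}\}$, which attains $\|\nabla\chi\|_{L^\infty}=(R-\Rscat)^{-1}$ exactly, and justifies its admissibility by noting that the tangential components of $\nabla\chi$ are continuous across the two spherical interfaces, so $\nabla\chi\in H(\curl;\Rea^3)$ and hence the modified field stays in $H_{\rm loc}(\curl;\Rea^3)$ — this removes the need for the infimum-over-cut-offs remark you flagged.
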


\subsubsection{Overview of how Theorems \ref{thm:BoundSmooth} and \ref{thm:BoundRough} are proved}\label{sec:overview}

The basic ingredient of the proofs of Theorems \ref{thm:BoundSmooth} and \ref{thm:BoundRough} is the identity 
\begin{align}\nonumber
&2\Re\Big\{\big(\nabla\times\bE-\ri\wn\mu\bH\big)\cdot(\epsilon\conj\bE\times\bx+\beta\conj \bH)
+\big(\nabla\times\bH+\ri\wn\epsilon\bE\big)\cdot(\mu\conj\bH\times\bx-\beta\conj\bE)\Big\}
\\ \nonumber
&\qquad=\nabla\cdot\Big[
2\Re\Big\{(\bE\cdot\bx)\epsilon\conj\bE+(\bH\cdot\bx)\mu\conj\bH
+\beta\bE\times\conj\bH
\Big\}
-(\epsilon \bE\cdot\conj\bE)\bx -(\mu \bH\cdot\conj\bH)\bx 
\Big]\\ 
&\qquad\qquad -2\Re\Big\{
(\bE\cdot\bx)\nabla\cdot[\epsilon\conj\bE]+(\bH\cdot\bx)\nabla\cdot[\mu\conj\bH]
+\nabla\beta\cdot\bE\times\conj\bH
\Big\}
\nonumber\\
&\qquad\qquad
+\big(\epsilon+(\bx\cdot\nabla)\epsilon\big)\bE\cdot\conj\bE
+\big(\mu+(\bx\cdot\nabla)\mu\big)\bH\cdot\conj\bH,
\label{eq:morid1}
\end{align}
for a suitable scalar function $\beta$.
The bound in Theorem \ref{thm:BoundSmooth} arises from integrating this identity over $B_R$, ensuring that the non-divergence terms on the right-hand side control the appropriate weighted $H(\curl;B_R)$ norm of $\bE$ (observe
from the right-hand side of  \eqref{eq:identity2ndorder} how the conditions $ \epsilon + (\bx\cdot \nabla)\epsilon>0$ and 
$(\mu + (\bx\cdot\nabla)\mu)>0$ then arise), and show that the term on $\partial B_R$ has the appropriate sign using the fact that $\bE$ satisfies the Silver-M\"uller radiation condition \eqref{eq:intro_SM}.
The bound in Theorem \ref{thm:BoundRough} is then obtained from Theorem \ref{thm:BoundSmooth} using approximation arguments similar to those in \cite{GrPeSp:18}, which in turn were inspired by analogous arguments in the setting of rough-surface scattering in \cite{Th:06} (with this thesis recently made available as \cite{Ba:19}).

To connect \eqref{eq:morid1} with other identities in the literature, it is convenient to consider the case when $\ri \wn \mu \bH =\nabla\times \bE$
(i.e., $\bK=\bzero$ in \eqref{eq:first_order}) and 
\eqref{eq:morid1} then becomes 
\begin{align}\nonumber
&2 \Re \left\{ \big(\wn^{-2} \nabla\times \big(\mu^{-1}\nabla \times \bE\big) -  \epsilon \bE\big) \cdot \overline{ \big( \big(\mu^{-1} \nabla\times \bE\big)\times \bx - \ri \wn \beta \bE\big)}\right\}\\ \nonumber
&= 
\nabla\cdot \Big[ 2 \Re\left\{  (\bE\cdot\bx)\epsilon \overline{\bE} + \wn^{-2}\big(\big(\mu^{-1}\nabla\times \bE\big)\cdot \bx\big) \nabla\times \overline{\bE} + \ri \wn^{-1} \beta \bE \times \big(\mu^{-1}\nabla\times \overline{\bE}\big)\right\} \\ \nonumber
& \hspace{6cm}-  \big(\epsilon \bE\cdot \overline{\bE} \big)\bx -\wn^{-2}\big(\nabla \times \bE\big) \cdot \big(\mu^{-1} \nabla\times \overline{\bE}\big) \bx\Big]\\ \nonumber
&\hspace{1cm}- 2 \Re \Big\{ (\bE\cdot\bx) \nabla \cdot \big[\epsilon \overline{\bE}\big] + \ri \wn^{-1} \nabla \beta \cdot \bE \times \big(\mu^{-1} \nabla \times \overline{\bE}\big) \Big\}\\ \label{eq:identity2ndorder}
&\hspace{1cm}
+  \big( \epsilon + (\bx\cdot \nabla)\epsilon \big) \bE\cdot \overline{\bE} +\wn^{-2} \big( (\mu + (\bx\cdot\nabla)\mu)\mu^{-1}\nabla\times \bE \big) \cdot \mu^{-1}\nabla\times \overline{\bE}.
\end{align}
Observe that the left-hand side of \eqref{eq:identity2ndorder} involves the second-order form of the time-harmonic Maxwell equations multiplied by a linear combination of $(\mu^{-1}\nabla \times \bE)\times \bx$ and $\bE$.
For the Helmholtz equation, Morawetz pioneered the use of multipliers that are a linear combination of a derivative of $u$ and $u$ itself \cite{MoLu:68, Mo:75}, with the key insight being that this linear combination could deal with the contribution ``at infinity'' -- in our case on $\partial B_R$ -- using the radiation condition (for more on this, see the more-recent presentation and discussion in \cite[Remark 2.3 and Lemma 2.4]{SpChGrSm:11}).

For the Maxwell equations in the time domain,
Morawetz herself used a linear combination of multipliers \cite{Mo:74}, 
with similar multipliers used in control theory by \cite{Ka:90, Ka:94, NiPi:05}, and in general relativity by \cite{Bl:08, AnBl:15, Ma:20}.
For the second-order form of the time-harmonic Maxwell equations, multipliers involving $(\nabla \times \bE)\times \bZ$ for a vector field $\bZ$ have been well-used; see, e.g.,  \cite{Mi:95, HaLe:11, LeNg:13, NgTr:19,AndreaPhD} 

However, the time-harmonic Maxwell equations posed in $\Rea^3$ with the Silver--M\"uller radiation condition seem not to have been studied using the multiplier technique before, and, correspondingly, we have not been able to find in the literature the identity \eqref{eq:identity2ndorder}/\eqref{eq:morid1}, involving the linear combination of multipliers needed to deal with the radiation condition.

\subsubsection{Bounds in unweighted norms}

The $L^2$ norms on the left- and right-hand sides of the bound \eqref{eq:thm:BoundSmooth} are weighted with the coefficients $\epsilon,\mu$.
Alternatively, one can repeat the arguments leading to 
\eqref{eq:thm:BoundSmooth} and work in unweighted norms.
The analogue of \eqref{eq:thm:BoundSmooth} is then
\begin{align}\label{eq:unweighted}
\begin{aligned}
\epsilon_*\N{\bE}^2_{L^2(B_R)} 
+\mu_*\N{\bH}^2_{L^2(B_R)} 
&\le 
 2\max \left\{ 
8 R^2 \bigg(\frac{\mumax^2
}{\mu_*}
\frac{\epsilonmax}{\epsilonmin}
+\frac{\epz\muz}{\epsilon_*}\frac{\epsilonmax}{\epsilonmin}
 \bigg), \frac{\epsilon_*}{\epsilonmin^2 \omega^2}\right\}
\N{\BJ}^2_{\BL^2(B_R)}\\
&
+
2\max \left\{
8 R^2
\bigg(
\frac{
\epsilonmax^2
}{\epsilon_*}\frac{\mumax}{\mumin}
+\frac{\epz\muz}{\mu_*}\frac{\mumax}{\mumin}
\bigg)
, \frac{\mu_*}{\mumin^2 \omega^2}\right\}
\N{\BK}^2_{\BL^2(B_R)},
\end{aligned}
\end{align}
where now
\begin{align}\label{eq:GrowthCoeffunweighted}
\epsilon_*:=\essinf_{\bx\in\Rea^3}\big(\epsilon+(\bx\cdot\nabla)\epsilon \big),
\qquad
\mu_*:=\essinf_{\bx\in\Rea^3}\big(\mu+(\bx\cdot\nabla)\mu \big)
\end{align}
with both assumed $>0$. Remark \ref{rem:unweighted} below discusses in more detail how to obtain \eqref{eq:unweighted}.

\subsubsection{The analogous results for the interior impedance problem}

In \S\ref{sec:IIP}, we prove results analogous to Theorems  \ref{thm:BoundSmooth} and \ref{thm:BoundRough} for the Maxwell interior impedance problem in Lipschitz domains that are star-shaped with respect to a ball.
This problem is:~given a bounded Lipschitz open set $\domainimp$
that is star-shaped with respect to a ball 
 (i.e., star-shaped with respect to each point in a ball of non-zero radius) and has outward-pointing unit vector $\hn$, $\epsilon,\mu \in W^{1,\infty}(\domainimp;\SPD)$, $\vartheta\in L^\infty(\deOimp)$ uniformly positive, $\bJ,\bK\in L^2(\domainimp)$, 
 and $\bg \in L^2_T(\partial \Omega)$, 
 find 
 \beqs
 \bE, \bH\in H\imp(\curl;\domain):=\big\{\bv\in H(\curl;\domain):\bv_T\in L^2_T(\partial \domain)\big\}
\eeqs 
 such that 
\beq\label{eq:ImpedanceFirst}
\ri\wn\epsilon\bE+\nabla\times\bH =\bJ \quad\tand\quad
-\ri\wn\mu\bH+\nabla\times\bE=\bK \quad\tin \domainimp,
\eeq
and
\beq\label{eq:ImpedanceBC}
\bH\times\hn-\vartheta\bE_T=\bg\quad\oon\deOimp,
\eeq
where $\bE_T$ denotes the tangential trace of $\bE$, defined for smooth vector fields by $\bv_T:=(\hn\times\bv)\times\hn$.
These bounds, in Theorems \ref{thm:ImpedanceSmooth} and \ref{thm:ImpedanceRough} below, generalise the bounds for $\epsilon=\mu=\MI$ in \cite[\S3]{MaxwellPDE} and are the Maxwell analogues of the Helmholtz results in \cite[Theorem 1]{BaChGo:17}, \cite[Theorem A.6]{GrPeSp:18}.

There are two reasons we prove these results. First, the interior impedance problem is a ubiquitous model problem in the numerical analysis of the time-harmonic Maxwell equations; see, e.g., \cite[Chapter 7]{MON03}, \cite{GaMe:12, HiMoPe:13, FeWu:14, LuChQi:17, Ve:19, NiTo:20, ChGrLaTa:21, MeSa:22}. 
Second, domain-decomposition methods for time-harmonic wave problems (including the Helmholtz and Maxwell equations) often use impedance boundary conditions on the subdomains, following the work of \cite{De:91, BeDe:97} in the Helmholtz context; see, e.g., \cite{BoDoGrSpTo:19, BoDoKyPe:20}.
Such impedance boundary conditions are then the starting point for so-called \emph{optimised Schwarz methods}; see, e.g.,
\cite{RoGe:06, DoLaPe:08, DoGaGe:09}. 
Just as results about the Helmholtz interior impedance problem can be used to analyse these methods in the Helmholtz context (see, e.g., the heterogeneous analysis in \cite{GoGrSp:21}), we expect the bounds in  \S\ref{sec:IIP} to play the analogous role in the analysis of Maxwell domain-decomposition methods.

The proofs of Theorems \ref{thm:ImpedanceSmooth} and \ref{thm:ImpedanceRough} below are very similar to 
those of Theorems \ref{thm:BoundSmooth} and \ref{thm:BoundRough}. In particular, recall from \S\ref{sec:overview} that, for the problem in $\Rea^3$, multiplying the PDE by a linear combination of $(\mu^{-1}\nabla\times\bE)\times \bx$ and $\bE$ allows one to deal with the contribution on $\partial B_R$ using the Silver--M\"uller radiation condition; for the interior impedance problem, this linear combination allows one to deal with the contribution on $\partial \Omega$ using the impedance boundary condition.

In fact, the results about the interior impedance problem are actually harder to prove that the results about the problem in $\Rea^3$.
The difficulty comes in integrating the Morawetz-type identity over the domain. For the problem in $\Rea^3$, we integrate the identity over $B_R$, and
 $\BE$ has sufficient regularity for this integration when $\epsilon,\mu \in W^{1,\infty}$ because of interior regularity of solutions of the Maxwell equations. In contrast, for the interior impedance problem, we integrate the identity over $\Omega$, and the regularity of the solution of the interior impedance problem when $\Omega$ is only Lipschitz is more delicate. This exactly parallels the Helmholtz case, where, at least for constant-coefficient problems, 
justifying integrating Morawetz identities over Lipschitz domains follows from the density result \cite[Theorem~1]{CoD98}, which uses the harmonic-analysis results of \cite{JeKe:81, JeKe:95}. 
In Theorem \ref{thm:Density} we generalise \cite[Theorem~1]{CoD98} to $\epsilon,\mu\neq \MI$ using 
the harmonic-analysis results of \cite{MiTa:99, MiTa:01}. This result is used to justify integrating the Morawetz-type identity over Lipschitz domains in Part (i) of Lemma~\ref{lem:IntegratedM}.

%%%%%%%%%%%%%%%%%%%%%%%%%%%%%%%%%%%%%%%%%%%%%%%%%%%%%%%%%%%%%%%%%%%%%%%%%%%%%%%%%%%%%%%%%%%%
\section{Morawetz-type identities}

\subsection{The identities in pointwise form}

In this section, $\Sym$ denotes the set of $3\times3$ real, symmetric matrices. 

\ble\mythmname{First Morawetz-type identity}\label{lem:Morawetz1}
Let $D\subset \Rea^3$ be open and let $\bE$, $\bH \in C^2(D)^3$.
Let $\epsilon,\mu\in C^1(D,\Sym)$ and $\beta\in C^1(D,\IR)$.
Then the identity \eqref{eq:morid1} holds in $D$.
%\begin{align}\nonumber
%&2\Re\Big\{\big(\nabla\times\bE-\ri\wn\mu\bH\big)\cdot(\epsilon\conj\bE\times\bx+\beta\conj \bH)
%+\big(\nabla\times\bH+\ri\wn\epsilon\bE\big)\cdot(\mu\conj\bH\times\bx-\beta\conj\bE)\Big\}
%\\ \nonumber
%&=\nabla\cdot\Big[
%2\Re\Big\{(\bE\cdot\bx)\epsilon\conj\bE+(\bH\cdot\bx)\mu\conj\bH
%+\beta\bE\times\conj\bH
%\Big\}
%-(\epsilon \bE\cdot\conj\bE)\bx -(\mu \bH\cdot\conj\bH)\bx 
%\Big]\\ 
%&\quad -2\Re\Big\{
%(\bE\cdot\bx)\nabla\cdot[\epsilon\conj\bE]+(\bH\cdot\bx)\nabla\cdot[\mu\conj\bH]
%+\nabla\beta\cdot\bE\times\conj\bH
%\Big\}
%\nonumber\\&\quad
%+\big(\epsilon+(\bx\cdot\nabla)\epsilon\big)\bE\cdot\conj\bE
%+\big(\mu+(\bx\cdot\nabla)\mu\big)\bH\cdot\conj\bH
%\qquad \iin D.\label{eq:morid1}
%\end{align}
\ele
We write \eqref{eq:morid1} as 
\beqs
R_\beta = \nabla\cdot \bQ_\beta + P_\beta;
\eeqs
i.e., $\bQ_\beta$ is defined by
\beq\label{eq:Qbeta}
\bQ_\beta:= 2\Re\Big\{(\bE\cdot\bx)\epsilon\conj\bE+(\bH\cdot\bx)\mu\conj\bH
+\beta\bE\times\conj\bH
\Big\}
-(\epsilon \bE\cdot\conj\bE)\bx -(\mu\bH\cdot\conj\bH)\bx,
\eeq
and $R_\beta$ and $P_\beta$ are defined analogously. Observe that, when $\bE$ and $\bH$ satisfy \eqref{eq:first_order} the term $R_\beta$ depends linearly on the data $\bJ$. 

The following lemma describes a special case of \eqref{eq:morid1} with $\epsilon$ and $\mu$ constant and scalar, and $\beta = r\sqrt{\epsilon\mu}$; in this case $P_\beta$ is the sum of (i) terms that vanish when $\nabla\cdot[\epsilon \bE]=0$ and $\nabla \cdot [\mu \bH]=0$, and (ii) terms that are non-negative.

\ble\mythmname{Second Morawetz-type identity}\label{lem:Morawetz2}
Let $D\subset \Rea^3$ and let $\bE$, $\bH \in C^2(D)^3$.
Let $\epsilon_0,\mu_0$ be {real-valued} constants and recall that $r:=|\hx|$. Then
\begin{align}\nonumber
&2\Re\Big\{\big
(\nabla\!\times\!\bE-\ri\wn\muz\bH\big)\cdot(\epz\conj\bE\times\bx+r\sqrt{\epz\muz}\conj \bH)
+\big(\nabla\!\times\!\bH+\ri\wn\epz\bE\big)\cdot(\muz\conj\bH\times\bx-r\sqrt{\epsilon_0 \mu_0}\conj\bE)\Big\}
\\ \nonumber
&=\nabla\cdot\Big[
2\Re\Big\{\epsilon_0(\bE\cdot\bx)\conj\bE+\mu_0(\bH\cdot\bx)\conj\bH
+r\sqrt{\epsilon_0 \mu_0}\bE\times\conj\bH
\Big\}
-\epsilon_0|\bE|^2\bx-\mu_0|\bH|^2\bx
\Big]\\  \nonumber
&\;\; -2\Re\Big\{
(\bE\cdot\bx)\nabla\cdot[\epsilon_0\conj\bE]+(\bH\cdot\bx)\nabla\cdot[\mu_0\conj\bH]
\Big\}
+\frac{\epsilon_0}{2}\big( |\bE|^2 - |\bE\times \hx|^2\big)
+\frac{\mu_0}{2}\big( |\bH|^2 - |\bH\times \hx|^2\big)\\
&\;\;+ 
\frac{1}{2\mu_0} \big| \mu_0 \bH \times \hx - \sqrt{\epsilon_0 \mu_0} \bE\big|^2 + \frac{1}{2\epsilon_0}\big| \epz \hx \times \bE - \sqrt{\epsilon_0 \mu_0} \bH\big|^2.
\label{eq:morid1a}
\end{align}
\ele

To prove the bound \eqref{eq:thm:BoundSmooth} on the solution of the transmission problem, the plan is to use the identity \eqref{eq:morid1} in $B_R$ and then the identity \eqref{eq:morid1a} in $\Rea^3\setminus B_R$ to deal with the contribution from infinity.

\bpf[Proof of Lemma \ref{lem:Morawetz1}]
The identity \eqref{eq:morid1} is the sum of the identity for the ``Rellich multipliers'', 
i.e.\ the test fields $\epsilon\conj\bE\times\bx$ and $\mu\conj\bH\times\bx$,
\begin{align}\label{eq:morid2}
&2\Re\Big\{\big(\nabla\times\bE-\ri\wn\mu\bH\big)\cdot(\epsilon\conj\bE\times\bx)
+\big(\nabla\times\bH+\ri\wn\epsilon\bE\big)\cdot(\mu\conj\bH\times\bx)\Big\}
\\ \nonumber
&=\nabla\cdot\Big[
2\Re\Big\{(\bE\cdot\bx)\epsilon\conj\bE+(\bH\cdot\bx)\mu\conj\bH\Big\}
-(\epsilon \bE\cdot\conj\bE)\bx 
-(\mu \bH\cdot\conj\bH)\bx 
\Big]\\ \nonumber
&\quad -2\Re\Big\{(\bE\cdot\bx)\nabla\cdot[\epsilon\conj\bE]+(\bH\cdot\bx)\nabla\cdot[\mu\conj\bH]\Big\}
+\big(\epsilon+(\bx\cdot\nabla)\epsilon\big) \bE\cdot\conj\bE+\big(\mu+(\bx\cdot\nabla)\mu\big)\bH\cdot\conj\bH
\end{align}
and the identity for the parts of the multipliers containing $\beta$
\beq\label{eq:morid3}
2\Re\Big\{\big(\nabla\times\bE-\ri\wn\mu\bH\big)\cdot(\beta\conj \bH)
-\big(\nabla\times\bH+\ri\wn\epsilon\bE\big)\cdot(\beta\conj\bE)\Big\}
=2\Re\Big\{\nabla\cdot[\beta\bE\times\conj\bH]-\nabla\beta\cdot\bE\times\conj\bH\Big\}.
\eeq
Since $\mu \bH\cdot \conj\bH$ and $\epsilon\bE\cdot\conj\bE$ are real, the left-hand side of \eqref{eq:morid3} equals
\begin{align*}
2\Re\Big\{\beta(\nabla\times\bE\cdot\conj\bH-\nabla\times\bH\cdot\conj\bE)\Big\}
&=2\Re\Big\{\beta(\nabla\times\bE\cdot\conj\bH-\nabla\times\conj\bH\cdot\bE)\Big\}\\
&=2\Re\Big\{\beta\nabla\cdot[\bE\times\conj\bH]\Big\}
\end{align*}
which equals the right-hand side of \eqref{eq:morid3}, and thus we only need to prove \eqref{eq:morid2}.

The left-hand side of \eqref{eq:morid2} equals
\begin{align*}
&2\Re\Big\{\big(\nabla\times\bE-\ri\wn\mu\bH\big)\cdot(\epsilon\conj\bE\times\bx)
+\big(\nabla\times\bH+\ri\wn\epsilon\bE\big)\cdot(\mu\conj\bH\times\bx)\Big\}\\
&=2\Re\Big\{\big(\nabla\times\bE\big)\cdot(\epsilon\conj\bE\times\bx)
+\big(\nabla\times\bH\big)\cdot(\mu\conj\bH\times\bx) +\ri\wn\big( - \mu\bH\cdot \epsilon\conj \bE \times \bx + \epsilon\bE \cdot \mu\conj \bH \times \bx\big)\Big\}\\
&=2\Re\Big\{\big(\nabla\times\bE\big)\cdot(\epsilon\conj\bE\times\bx)
+\big(\nabla\times\bH\big)\cdot(\mu\conj\bH\times\bx)\Big\}. 
\end{align*}
We now claim that for all $\bv\in C^2(D)^3$ and all $\alpha\in C^1(D,\SPD)$,
\begin{align}\nonumber
&2\Re\Big\{(\nabla\times\bv)\cdot(\alpha\conj\bv\times\bx)\Big\}\\
&= \nabla\cdot \Big[ 2 \Re \big\{ (\bv \cdot \bx) \alpha \conj \bv\big\} - (\alpha \bv\cdot\conj\bv) \bx\Big]
- 2 \Re\Big\{ (\bv\cdot\bx) \nabla\cdot[\alpha\conj\bv]\Big\}+  \big( \alpha + (\bx\cdot\nabla)\alpha\big)\bv\cdot\conj\bv.\label{eq:Euan1}
\end{align}
Summing \eqref{eq:Euan1} with $\bv=\bE$ and $\alpha=\epsilon$ to the same expression with $\bv=\bH$ and $\alpha=\mu$, we arrive at \eqref{eq:morid2}.
Therefore, we only need to show that \eqref{eq:Euan1} holds.
Proceeding in a similar way to that in the proof of \cite[Lemma~5.3.1]{AndreaPhD}, we find that standard vector calculus identities give
\begin{align}\nonumber
&2\Re\Big\{(\nabla\times\bv)\cdot(\alpha\conj\bv\times\bx)\Big\}\\ \nonumber
&=
2\Re\Big\{\nabla\cdot\Big[\bv\times(\alpha\conj\bv\times\bx)\Big]
+\bv\cdot\nabla\times(\alpha\conj\bv\times\bx)\Big\}\\ \nonumber
&=
2\Re\Big\{\nabla\cdot\Big[ (\bv\cdot\bx)\alpha\conj\bv - \big(\alpha\bv\cdot\conj\bv\big)\bx\Big]
+\bv\cdot\big(\alpha\conj\bv\nabla\cdot\bx-\bx\nabla\cdot[\alpha\conj\bv]
+(\bx\cdot\nabla)(\alpha\conj\bv)-(\alpha\conj\bv\cdot\nabla)\bx\big)\Big\}\!
\nonumber\\
&=
2\Re\Big\{\nabla\cdot\Big[ (\bv\cdot\bx)\alpha\conj\bv - \big(\alpha\bv\cdot\conj\bv\big)\bx\Big]
-\bv\cdot\bx\nabla\cdot[\alpha\conj\bv]+\bv\cdot(\bx\cdot\nabla)(\alpha\conj\bv)\Big\}
+ 4 \alpha\bv\cdot\conj\bv
\nonumber\\
&= 2\Re\Big\{ \nabla\cdot\Big[ (\bv\cdot\bx)\alpha\conj\bv - \big(\alpha\bv\cdot\conj\bv\big)\bx\Big]
-\bv\cdot\bx\nabla\cdot[\alpha\conj\bv]+ \alpha \bv \cdot (\bx\cdot\nabla)\conj\bv\Big\}+ 4 \alpha\bv\cdot\conj\bv
+ 2 \big( (\bx\cdot\nabla)\alpha\big)\bv\cdot\conj\bv.\label{eq:Euan2}
\end{align}
The identity
\beq\label{eq:Euan3}
2 \Re\Big\{\alpha \bv\cdot (\bx\cdot\nabla)\conj\bv\Big\} 
= \nabla\cdot \Big[ (\alpha\bv\cdot\conj\bv)\bx\Big]  -3(\alpha\bv\cdot\conj\bv) - \big( (\bx\cdot\nabla)\alpha\big)\bv\cdot\conj\bv
\eeq
can be proved by expanding in components the divergence on its right-hand side.
Using \eqref{eq:Euan3} in \eqref{eq:Euan2} we find \eqref{eq:Euan1}, and the proof is complete.
\epf

\bpf[Proof of Lemma \ref{lem:Morawetz2}]
The identity \eqref{eq:morid1a} will follow from \eqref{eq:morid1} with $\beta=\sqrt{\epsilon_0 \mu_0} \, r$, $\epsilon=\epsilon_0$, and $\mu=\mu_0$ if we can show that
\begin{align}\nonumber
-2 \sqrt{\epsilon_0 \mu_0}\, \Re\big\{ \hx \cdot \bE \times \conj \bH\big\} =&
\frac{1}{2\mu_0} \big| \mu_0 \bH \times \hx - \sqrt{\epsilon_0 \mu_0}\, \bE\big|^2 + \frac{1}{2\epsilon_0}\big| \epsilon_0 \hx \times \bE - \sqrt{\epsilon_0 \mu_0} \,\bH\big|^2\\
&-\frac{\epsilon_0}{2}\big( |\bE|^2 + |\bE\times \hx|^2\big)
-\frac{\mu_0}{2}\big( |\bH|^2 + |\bH\times \hx|^2\big).
\label{eq:morid4}
\end{align}
Using $-2 \Re\{ \bb \cdot \conj\ba\}= |\ba-\bb|^2 - |\ba|^2 - |\bb|^2$ with $\ba= \mu_0  \bH \times \hx$ and $\bb= \sqrt{\epsilon_0 \mu_0} \bE$, we have
\begin{align}\label{eq:morid5}
-2 \sqrt{\epsilon_0 \mu_0}\, \Re\big\{ \hx \cdot \bE \times \conj \bH\big\} &=
-2 \sqrt{\epsilon_0 \mu_0}\, \Re\big\{ \bE\cdot \conj \bH\times \hx\big\} 
\nonumber\\&=
\frac{1}{\mu_0} \big| \mu_0 \bH \times \hx - \sqrt{\epsilon_0 \mu_0}\, \bE\big|^2- \mu_0 \big| \bH \times \hx\big|^2 - \epsilon_0 \big| \bE\big|^2.
\end{align}
Similarly
\begin{align}\label{eq:morid6}
-2 \sqrt{\epsilon_0 \mu_0}\, \Re\big\{ \hx \cdot \bE \times \conj \bH\big\} &=
-2 \sqrt{\epsilon_0 \mu_0}\, \Re\big\{ \conj\bH\cdot \hx\times \bE\big\} 
\nonumber\\&=
\frac{1}{\epsilon_0} \big| \epsilon_0 \hx \times \bE - \sqrt{\epsilon_0 \mu_0}\, \bH\big|^2- \epsilon_0 \big| \hx \times \bE\big|^2 - \mu_0 \big| \bH\big|^2.
\end{align}
Adding \eqref{eq:morid5} and \eqref{eq:morid6} and dividing by two, we find \eqref{eq:morid4} and the proof is complete.
\epf

To separate tangential and normal traces on boundaries we use the following identity.
\begin{lemma}\label{lem:NormalTangentMat}
Given $\bv\in\IC^3$, $\alpha\in\SPD$ and $\hn\in\IR^3$ with $|\hn|=1$, 
\begin{align}\nonumber
&2\Re\big\{(\bv\cdot\bx)(\alpha\conj\bv\cdot\hn)\big\}-(\alpha\bv\cdot\conj\bv)\bx\cdot\hn\\
&=(\alpha\bv_N\cdot\conj\bv_N-\alpha\bv_T\cdot\conj\bv_T)\bx\cdot\hn+2\Re\big\{(\bv_T\cdot\bx_T)(\alpha\conj\bv\cdot\hn)\big\},
\label{eq:NormalTangentMat}
\end{align}
where  $\bv_N:=(\bv\cdot\hn)\hn$ and $\bv_T:=\bv-\bv_N$.
\end{lemma}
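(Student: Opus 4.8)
The plan is to decompose both occurrences of $\bv$ on the left-hand side into their normal and tangential parts $\bv = \bv_N + \bv_T$ and expand. First I would treat the quadratic term $(\alpha\bv\cdot\conj\bv)\,\bx\cdot\hn$: since $\alpha\in\SPD$ is symmetric, $\alpha\bv\cdot\conj\bv = \alpha(\bv_N+\bv_T)\cdot\conj{(\bv_N+\bv_T)} = \alpha\bv_N\cdot\conj\bv_N + \alpha\bv_T\cdot\conj\bv_T + 2\Re\{\alpha\bv_N\cdot\conj\bv_T\}$, so that
\[
-(\alpha\bv\cdot\conj\bv)\,\bx\cdot\hn
= \big(\alpha\bv_N\cdot\conj\bv_N - \alpha\bv_T\cdot\conj\bv_T\big)\bx\cdot\hn
- 2\big(\alpha\bv_N\cdot\conj\bv_N\big)\bx\cdot\hn
- 2\Re\{\alpha\bv_N\cdot\conj\bv_T\}\,\bx\cdot\hn .
\]
Here the first group on the right is already exactly the first term on the right-hand side of \eqref{eq:NormalTangentMat}, so the task reduces to showing that the leftover pieces cancel against the cross term $2\Re\{(\bv\cdot\bx)(\alpha\conj\bv\cdot\hn)\}$ up to the desired $2\Re\{(\bv_T\cdot\bx_T)(\alpha\conj\bv\cdot\hn)\}$.

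Next I would expand the cross term. Write $\bx = \bx_N + \bx_T$ with $\bx_N = (\bx\cdot\hn)\hn$, so $\bv\cdot\bx = (\bv\cdot\hn)(\bx\cdot\hn) + \bv_T\cdot\bx_T$ (using $\bv_N\cdot\bx_T = 0 = \bv_T\cdot\bx_N$). Hence
\[
2\Re\{(\bv\cdot\bx)(\alpha\conj\bv\cdot\hn)\}
= 2\,(\bx\cdot\hn)\,\Re\{(\bv\cdot\hn)(\alpha\conj\bv\cdot\hn)\}
+ 2\Re\{(\bv_T\cdot\bx_T)(\alpha\conj\bv\cdot\hn)\}.
\]
The second term here is precisely the second term on the right-hand side of \eqref{eq:NormalTangentMat}, so everything comes down to the scalar identity
\[
2\,\Re\{(\bv\cdot\hn)(\alpha\conj\bv\cdot\hn)\}
= 2\,\alpha\bv_N\cdot\conj\bv_N + 2\,\Re\{\alpha\bv_N\cdot\conj\bv_T\},
\]
which is immediate once one notes $(\bv\cdot\hn)\hn = \bv_N$ and $\alpha\conj\bv\cdot\hn$ appears multiplied by $\bv\cdot\hn$, giving $(\bv\cdot\hn)(\alpha\conj\bv\cdot\hn) = \alpha\conj\bv\cdot\bv_N = \alpha\bv_N\cdot\conj\bv_N + \alpha\bv_T\cdot\conj\bv_N$ by symmetry of $\alpha$; taking real parts finishes it. Collecting the three displays then yields \eqref{eq:NormalTangentMat}.

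This is a purely algebraic identity with no analytic content, so there is no real obstacle; the only point requiring a little care is bookkeeping the real parts and using the symmetry of $\alpha$ consistently (so that $\alpha\conj\bv\cdot\bw = \alpha\conj\bw\cdot\bv$ can be invoked freely), together with the orthogonality relations $\bv_N\cdot\bx_T = \bv_T\cdot\bx_N = 0$. I would simply carry out the expansion and regroup.
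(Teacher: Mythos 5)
Your proof is correct and takes essentially the same route as the paper: decompose $\bv=\bv_N+\bv_T$, $\bx=\bx_N+\bx_T$, use the orthogonality of normal and tangential parts and the symmetry of $\alpha$, and cancel the leftover pieces. The only cosmetic quibble is that in your final scalar identity $\alpha\conj\bv\cdot\bv_N=\alpha\bv_N\cdot\conj\bv_N+\alpha\bv_N\cdot\conj\bv_T$ rather than $\alpha\bv_N\cdot\conj\bv_N+\alpha\bv_T\cdot\conj\bv_N$ (the two cross terms are complex conjugates), but since you take real parts immediately afterwards this does not affect the result.
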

\begin{proof}
By the symmetry of $\alpha$ and the decomposition $\bv=\bv_T+\bv_N$,
\begin{align*}
2&\Re\big\{(\bv\cdot\bx)(\alpha\conj\bv\cdot\hn)\big\}-(\alpha\bv\cdot\conj\bv)(\bx\cdot\hn)
\\
=&2\Re\big\{(\bv_T\cdot\bx_T+\bv_N\cdot\bx_N)(\alpha\conj\bv_T\cdot\hn+\alpha\bv_N\cdot\hn)\big\}
-\alpha(\bv_T+\bv_N)\cdot(\conj\bv_T+\conj\bv_N)(\bx\cdot\hn)\\
=&2\Re\big\{(\bv_T\cdot\bx_T)(\alpha\conj\bv\cdot\hn)  
+ \underbrace{(\bv_N\cdot\bx_N)(\alpha\conj\bv_T\cdot\hn)}_{\substack{=(\bv\cdot\hn)(\bx\cdot\hn)(\alpha\conj\bv_T\cdot\hn)\\=(\bx\cdot\hn)(\alpha\conj\bv_T\cdot\bv_N)}}
+ \underbrace{(\bv_N\cdot\bx_N)(\alpha\conj\bv_N\cdot\hn)}_{\substack{=(\bv\cdot\hn)(\bx\cdot\hn)(\alpha\conj\bv_N\cdot\hn)\\=(\bx\cdot\hn)(\alpha\conj\bv_N\cdot\bv_N)}}
  \big\}\\
&-\big(\alpha\bv_T\cdot\conj\bv_T +
\alpha\bv_N\cdot\conj\bv_N 
+2\Re\{\alpha\bv_N\cdot\conj\bv_T\}\big)(\bx\cdot\hn),
\end{align*}
and the result follows.
\end{proof}

\subsection{The identities in integrated form}

Our next result is an integrated version of the identity \eqref{eq:morid1}. 
To state this result it is convenient to define the space 
\beq\label{eq:V}
V(D,\MA):=\bigg\{ \bv\in H(\curl; D):\; \nabla\cdot[\MA\bv]\in L^2(D),\;\MA\bv\cdot\hn \in L^2 (\partial D),\; \bv_T \in L^2_T (\partial D)\bigg\}
\eeq
where $D$ is a bounded Lipschitz open set with outward-pointing unit normal vector $\hn$ and $\MA\in W^{1,\infty}(D,\SPD)$. 
We make three remarks about this space.
\bit
\item 
In Appendix \ref{app:density} we prove that $C^\infty(\conj D)^3$ is dense in $V(D,\MA)$ when $\MA\in C^1(\overline{D},\SPD)$;
this extends the density result of \cite[Theorem~1]{CoD98} to $\epsilon,\mu\ne\mymatrix{I}$.

\item At least in the case $\MA=\MI$, either the condition  $\MA\bv\cdot\hn \in L^2(\partial D)$ or the condition $\bv_T\in L^2_T(\partial D)$ can be removed from the definition of $V(D,\MA)$ by \cite[Theorem~1]{CoD98}.
\item If $\MA$ satisfies $\lambda_{\min}\preceq\MA(\bx)\preceq\lambda_{\max}$ for scalar $0<\lambda_{\min}\le \lambda_{\max}$ and almost every $\bx\in\partial D$, then the normal trace of $\bv\in V(D,\MA)$ is bounded in $L^2(\partial D)$. Indeed
$$
|\bv\cdot\hn|=|\bv_N|
\le \frac{\MA\bv_N\cdot\conj\bv_N}{\lambda_{\min}|\bv_N|}
= \frac{\MA(\bv-\bv_T)\cdot\conj\bv_N}{\lambda_{\min}|\bv_N|}
\leq\frac{|\MA\bv\cdot\hn|}{\lambda_{\min}}+\frac{|\MA\bv_T\cdot\conj\bv_N|}{\lambda_{\min}|\bv_N|}
\le \frac{|\MA\bv\cdot\hn|}{\lambda_{\min}} + \frac{\lambda_{\max}|\bv_T|}{\lambda_{\min}},
$$
and thus 
$\N{\bv\cdot\hn}_{\partial D}\le \lambda_{\min}^{-1} \N{\MA\bv\cdot\hn}_{\partial D}+\lambda_{\min}^{-1} \lambda_{\max} \N{\bv_T}_{\partial D}<\infty$.
\eit

\ble\mythmname{Integrated form of the Morawetz identity \eqref{eq:morid1}}\label{lem:IntegratedM}
Let $D$ be a bounded Lipschitz open set with outward-pointing unit normal vector $\hn$. 
If $\beta \in C^1(\conj D)$ and \emph{either}

(i) $\bE\in V(D,\epsilon)$, $\bH \in V(D,\mu)$, $\epsilon, \mu \in C^1(\overline{D}, \SPD)$, 
\emph{or}

(ii) $\bE, \bH \in H^1(D)^3$, $\epsilon, \mu \in W^{1,\infty}(D, \SPD)$,
then
\begin{align}\nonumber
&2\int_D\Re\Big\{\big(\nabla\times\bE-\ri\wn\mu\bH\big)\cdot(\epsilon\conj\bE\times\bx+\beta\conj \bH)
+\big(\nabla\times\bH+\ri\wn\epsilon\bE\big)\cdot(\mu\conj\bH\times\bx-\beta\conj\bE)\Big\}
\\ \nonumber
&\qquad +\int_D 2\Re\Big\{
(\bE\cdot\bx)\nabla\cdot[\epsilon\conj\bE]+(\bH\cdot\bx)\nabla\cdot[\mu\conj\bH]
+\nabla\beta\cdot\bE\times\conj\bH
\Big\}
\\&\qquad\qquad\nonumber
-\big(\epsilon+(\bx\cdot\nabla)\epsilon\big)\bE\cdot\conj\bE-\big(\mu+(\bx\cdot\nabla)\mu\big)\bH\cdot\conj\bH\\ \nonumber
&=\int_{\partial D}
\left( \epsilon\bE_N\cdot\conj\bE_N - \epsilon\bE_T\cdot\conj\bE_T 
+ \mu\bH_N\cdot\conj\bH_N - \mu\bH_T\cdot\conj\bH_T\right) (\bx\cdot\hn) \\ 
&\qquad\qquad+ 
2\Re\bigg\{(\bE_T\cdot\bx_T) ( \epsilon\conj\bE\cdot\hn) +(\bH_T\cdot\bx_T) (\mu\conj\bH\cdot\hn)
+ \beta \bE_T\cdot \conj\bH_T\times \hn\bigg\}.
\label{eq:intmor}
\end{align}
\ele
\bpf
Recall that the divergence theorem $\int_D \nabla \cdot \textbf{F}
= \int_{\partial D} \textbf{F} \cdot \hn$
is valid when $\textbf{F} \in C^1(\overline{D})^3$ \cite[Theorem 3.34]{MCL00}, and thus for $\bF\in H^1(D)^3$ by the density of $C^1(\overline{D})^3$ in $H^1(D)^3$ \cite[Theorem~3.29]{MCL00} and the continuity of trace operator from $H^1(D)$ to $H^{1/2}(\partial D)$ \cite[Theorem 3.37]{MCL00}.
We first assume that $\epsilon, \mu$, and $\beta$ are as in the statement of the theorem, but $\bE, \bH\in C^\infty(\conj D)^3$ 

Recall that the product of an $H^1(D)$ function and a $W^{1,\infty}(D)$ function is in $H^1(D)$,
and the usual product rule for differentiation holds for such functions. 
This result implies that $\bQ_\beta$ \eqref{eq:Qbeta} is in $H^1(D)^{3}$ and then \eqref{eq:morid1} implies that $\nabla\cdot\bQ_\beta$ is given by the integrand on the left-hand side of \eqref{eq:intmor}.
The divergence theorem then implies that \eqref{eq:intmor} holds, where the fact that $\bQ_\beta\cdot \hn$ equals the integrand on the right-hand side of \eqref{eq:intmor} follows from the identity \eqref{eq:NormalTangentMat}
and the identity $\bE\times \conj\bH\cdot \hn= \bE\cdot\conj\bH \times \hn  =  \bE_T\cdot (\conj\bH_T\times \hn)$.

For (i), when $\epsilon, \mu \in C^1(\overline{D}, \SPD)$, $C^\infty(\conj D)^3$ is dense in $V(D,\epsilon)$ and $V(D,\mu)$ by Lemma \ref{thm:Density}; 
the result then follows since \eqref{eq:intmor} is continuous in $\bE$ and $\bH$ with respect to the topologies of $V(D,\epsilon)$ and $V(D,\mu)$.

For (ii), $C^\infty(\conj D)^3$ is dense in $H^1(D)^3$ and the result then follows since \eqref{eq:intmor} is continuous in $\bE$ and $\bH$ with respect to the topology of $H^1(D)^3$.
\epf

Part (i) of Lemma \ref{thm:Density} also completes the proof of \cite[Lemma 6.1]{Ve:19}, where this density was assumed. 

\subsection{Dealing with the contribution from infinity using the Silver--M\"uller radiation condition}

The next result (Lemma \ref{lem:infinity}) uses the Silver--M\"uller radiation condition \eqref{eq:intro_SM} to deal with the term on $\partial B_R$ when 
the Morawetz-type identity is integrated over $B_R$ and $\bE, \bH$ satisfy the homogeneous Maxwell equations outside $B_R$.

\ble\mythmname{Inequality on $\partial B_R$ used to deal with the contribution from infinity in the transmission problem}\label{lem:infinity} 
Let $\epsilon_0$ and $\mu_0$ be positive constants, and let $\bE,\bH$ be a solution of the homogeneous Maxwell equations 
\begin{align}\label{eq:MaxwellHomog}
\ri\wn\epsilon_0\bE+\nabla\times\bH=\bzero, \qquad
-\ri\wn\mu_0\bH+\nabla\times\bE=\bzero
\end{align}
in $\Rea^d\setminus \overline{B_{R_0}}$, for some $R_0>0$ satisfying the Silver-M\"uller radiation condition \eqref{eq:intro_SM}.
Let $\bQ_\beta$ be defined by \eqref{eq:Qbeta}.
Then, for $R\geq R_0$,
\beq\label{eq:ML}
\int_{\partial B_R} \bQ_{R\sqrt{\epsilon_0 \mu_0}} \cdot \hx \geq 0.
\eeq
\ele
\bpf
Since $\bQ_{R\sqrt{\epsilon_0 \mu_0}}=\bQ_{r\sqrt{\epsilon_0 \mu_0}}$ on $\partial B_R$,  the plan is to integrate \eqref{eq:morid1a} over $B_{R_1}\setminus \overline{B_R}$, use the divergence theorem and let $R_1\tendi$. Note that using the divergence theorem is allowed since both $\bE$ and $\bH$ are $C^\infty$ in $\Rea^d\setminus \overline{B_{R_0}}$ by elliptic regularity.

When $\bE$ and $\bH$ satisfy \eqref{eq:MaxwellHomog}, the identity \eqref{eq:morid1a} becomes 
\beqs
0 = \nabla\cdot \bQ_{r\sqrt{\epsilon_0 \mu_0}} + \widetilde{P}_{r\sqrt{\epsilon_0 \mu_0}},
\eeqs
where 
\begin{align*}
\widetilde{P}_{r\sqrt{\epsilon_0 \mu_0}}
:=&\frac{\epsilon_0}{2}\big( |\bE|^2 - |\bE\times \hx|^2\big)
+\frac{\mu_0}{2}\big( |\bH|^2 - |\bH\times \hx|^2\big)\\& \qquad
+\frac{1}{2\mu_0} \big| \mu_0 \bH \times \hx - \sqrt{\epsilon_0 \mu_0} \bE\big|^2 + \frac{1}{2\epsilon_0}\big| \epsilon_0 \hx \times \bE - \sqrt{\epsilon_0 \mu_0} \bH\big|^2\geq 0.
\end{align*}
Integrating \eqref{eq:morid1a} over $B_{R_1}\setminus \overline{B_R}$ and using the divergence theorem, we obtain
\beqs
0 =- \int_{\partial B_R} \bQ_{r\sqrt{\epsilon_0 \mu_0}} \cdot\hx+ \int_{\partial B_{R_1}}\bQ_{r\sqrt{\epsilon_0 \mu_0}}\cdot\hx + \int_{B_{R_1}\setminus B_R} \widetilde{P}_{r\sqrt{\epsilon_0 \mu_0}}.
\eeqs
If we can establish that $\int_{\partial B_{R_1}}\bQ_{r\sqrt{\epsilon_0 \mu_0}} \cdot\hx\tendo$ as $R_1\tendi$, the result follows since then
\begin{align*}
\int_{\partial B_R} \bQ_{r\sqrt{\epz\muz}}\cdot\hx
=\lim_{R_1\to \infty}\int_{\partial B_{R_1}} \bQ_{r\sqrt{\epz\muz}}\cdot\hx+\lim_{R_1\to \infty}&\int_{B_{R_1}\setminus \overline{B_R}}\widetilde{P}_{r\sqrt{\epz\muz}}
\geq0.
\end{align*}

We therefore only need to prove that $|\bQ_{r\sqrt{\epz\muz}}\cdot\hx|=o_{R\to\infty}(R^{-2})$ uniformly in $\hx$ so that $\lim_{R\to\infty}|\int_{\partial B_R}\bQ_{r\sqrt{\epz\muz}}\cdot\hx|=0$. On $\partial B_R$, $\hn = \hx$, and therefore $\bE_N = (\bE\cdot \hx)\hx$ and $\bE_T = (\hx\times \bE)\times \hx$. 
By the definition of $\bQ_{r\sqrt{\epz\muz}}$ \eqref{eq:Qbeta} and the identity \eqref{eq:NormalTangentMat}, 
\begin{align}\label{eq:Pavia1}
\bQ_{r\sqrt{\epz\muz}}\cdot\hx
=&
 r\epz|\bE_N|^2 + r\muz|\bH_N|^2
-r\epz|\bE_T|^2 - r\muz|\bH_T|^2
+2r\sqrt{\epz\muz}\Re\Big\{\,\conj\bH\times\hx\cdot\bE\Big\}\\ \nonumber
=& r\epz|\bE_N|^2 + r\muz|\bH_N|^2
-r\Big(\epz|\bE_T|^2 +\muz|\bH_T|^2
-2\sqrt{\epz\muz}\Re\Big\{\conj\bH_T\times\hx\cdot\bE_T\Big\}\Big)\\ \nonumber
=& r\epz|\bE_N|^2 + r\muz|\bH_N|^2
- r\big|\sqrt\epz\,\bE_T-\sqrt\muz\, \bH_T\times\hx\big|^2.
\end{align}
Taking the tangential and normal components of the Silver--M\"uller radiation conditions \eqref{eq:intro_SM}, we have 
$|\sqrt\epz\,\bE_T-\sqrt\muz\, \bH_T\times\hx| = \cO_{r\tendi}(r^{-2})$, $|\bE_N|=\cO_{r\tendi}(r^{-2})$,  and $|\bH_N|=\cO_{r\tendi}(r^{-2})$ (uniformly in $\hx$). Therefore,  $|\bQ_{r\sqrt{\epz\muz}}\cdot\hx|=\cO_{R\to\infty}(R^{-3})$ uniformly in $\hx$ and the proof is complete. 
\epf

The Helmholtz analogue of Lemma \ref{lem:infinity} first appeared implicitly in \cite{MoLu:68, Mo:75}, 
and first appeared explicitly in \cite[Lemma 2.1]{CWM08}.

%%%%%%%%%%%%%%%%%%%%%%%%%%%%%%%%%%%%%%%%%%%%%%%%%%%%%%%%%%%%%%%%%%%%%%%%%%%%%%%%%%%%%%%%%%%%
\section{Proofs of Theorems \ref{thm:BoundSmooth} and \ref{thm:BoundRough} and Corollary \ref{cor:scat}}\label{sec:Proofs}

\subsection{Proof of Theorem \ref{thm:BoundSmooth}}
\label{subsec:Proofs1}

\ble\label{lem:E1}
Let $\domain\subset\IR^d$ be a bounded Lipschitz domain, 
$\MM_j \in L^\infty(\domain;\SPD)$, 
$\bv_j,\bw_j\in L^2(\domain;\IC^m)$, $j=1,2$,  $\Xi\in\IR$, and $m_1,m_2>0$.
If 
\beqs
m_1 \N{\bv_1}^2_{L^2(\domain;\MM_1)} + m_2 \N{\bv_2}^2_{L^2(\domain:\MM_2)}
\leq \Re \int_{\domain}\MM_1 \bw_1 \cdot\conj\bv_1 + \Re \int_{\domain} \MM_2 \bw_2\cdot\conj \bv_2+\Xi,
\eeqs
then
\beq\label{eq:Elem2}
m_1 \N{\bv_1}^2_{L^2(\domain;\MM_1)} + m_2 \N{\bv_2}^2_{L^2(\domain;\MM_2)}
\leq m_1^{-1} \N{\bw_1}^2_{L^2(\domain;\MM_1)} + m_2^{-1} \N{\bw_2}^2_{L^2(\domain;\MM_2)}+ 2 \Xi.
\eeq
\ele
\bpf
Since $\MM_j\in\SPD$, $j=1,2$, one can define $\MM_j^{1/2}\in\SPD$ which 
satisfies $\MM_j=\MM_j^{1/2}\MM_j^{1/2}$. 
Then, by the Cauchy--Schwarz inequality and the inequality $2ab\leq \eps^{-1} a^2 + \eps b^2 $ for all $a,b,\eps>0$,
\begin{align*}\nonumber
\Re\int_{\domain}\MM_j \bw_j\cdot\conj\bv_j
= \Re\int_{\domain} \MM_j^{1/2}\bw_j \cdot \MM_j^{1/2}\conj\bv_j
&\leq \big\| \MM_j^{1/2}\bw_j \big\|_{L^2(\domain)} 
 \big\| \MM_j^{1/2}\bv_j \big\|_{L^2(\domain)} \\
 &\leq 
 \frac{1}{2m_j}  \big\| \MM_j^{1/2}\bw_j \big\|_{L^2(\domain)}^2
+ \frac{m_j}{2} \big\| \MM_j^{1/2}\bv_j \big\|_{L^2(\domain)}^2.
\end{align*}
The result \eqref{eq:Elem2} then follows from the definition of the weighted norms \eqref{eq:weighted_norms}.
\epf

We now prove an intermediate result, bounding the solution of the first-order system with right-hand sides in $H(\div;B_R)$

\begin{lemma}
\mythmname{Bound on first-order system with $W^{1,\infty}$ coefficients and right-hand sides in $H(\div;B_R)$}
\label{lem:BoundSmooth_old}
Suppose that, in addition to the set up in \S\ref{sec:set_up},
$\epsilon,\mu\in W^{1,\infty}(\IR^3,\SPD)$ and the conditions \eqref{eq:GrowthCoeff} hold. 
Suppose $\bJ, \bK \in H(\div;B_R)$ with compact support and fix $R>0$ such that $\supp(\mu-\mu_0\MI)\cup \supp(\epsilon-\epsilon_0\MI)\cup\esssupp\bJ\cup\esssupp\bK\subset B_R$.
Then the solution $\bE,\bH\in H_{\rm loc}(\curl;\Rea^3)$ of \eqref{eq:first_order}--\eqref{eq:intro_SM} 
satisfies
\begin{align}\nonumber
&\hspace{0cm}\epsilonconstant\wnorm{\epsilon}{B_R}{\bE}^2 + 
\muconstant
\wnorm{\mu}{B_R}{\bH}^2\\ \nonumber
&\hspace{1cm}
\le
4R^2
\bigg[
\frac{1}{\epsilonconstant}
\left (
\wnorm{\epsilon}{B_R}{\bK}
+
\sqrt{\epsilon_0\mu_0}
\wnorm{\epsilon^{-1}}{B_R}{\bJ}
+
\frac{
1}{\wn \epsmin^{1/2}}
\|\nabla \cdot \bJ\|_{B_R}
\right )^2
\\
&\hspace{2.5cm}+
\frac{1}{\muconstant}
\left (
\wnorm{\mu}{B_R}{\bJ}
+
\sqrt{\epsilon_0\mu_0}
\wnorm{\mu^{-1}}{B_R}{\bK}
+
\frac{
1}{\wn \mumin^{1/2}}
\|\nabla \cdot \bK\|_{B_R}
\right )^2\bigg].
\label{eq:thm:BoundSmooth_weighted}
\end{align}
\end{lemma}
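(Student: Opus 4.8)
The plan is to integrate the pointwise Morawetz-type identity \eqref{eq:morid1} over $B_R$, use the Maxwell system \eqref{eq:first_order} to turn the left-hand side (and the $\nabla\cdot[\epsilon\conj\bE]$, $\nabla\cdot[\mu\conj\bH]$ terms) into data-dependent quantities, discard the boundary contribution on $\partial B_R$ using the Silver--M\"uller radiation condition, bound the surviving volume term from below via \eqref{eq:GrowthCoeff}, and close with Lemma~\ref{lem:E1}. Concretely, I would apply the integrated identity of Lemma~\ref{lem:IntegratedM} on $D=B_R$ with the \emph{constant} weight $\beta\equiv R\sqrt{\epsilon_0\mu_0}$: then $\nabla\beta\equiv\bzero$, so the term $\nabla\beta\cdot\bE\times\conj\bH$ disappears, while $\beta|_{\partial B_R}$ coincides with the weight appearing in Lemma~\ref{lem:infinity}. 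Since $\bE,\bH$ satisfy \eqref{eq:first_order}, on the left-hand side $\nabla\times\bE-\ri\wn\mu\bH=\bK$ and $\nabla\times\bH+\ri\wn\epsilon\bE=\bJ$, and taking divergences of \eqref{eq:first_order} gives $\nabla\cdot[\epsilon\bE]=(\ri\wn)^{-1}\nabla\cdot\bJ$ and $\nabla\cdot[\mu\bH]=-(\ri\wn)^{-1}\nabla\cdot\bK$ (this is where $\bJ,\bK\in H(\div;B_R)$ enters); after substitution every term of \eqref{eq:intmor} becomes an $L^2(B_R)$-pairing of $\bJ$, $\bK$, $\nabla\cdot\bJ$ or $\nabla\cdot\bK$ against $\bE$ or $\bH$.

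To invoke part~(ii) of Lemma~\ref{lem:IntegratedM} I first need $\bE,\bH\in H^1(B_R)^3$. The compact supports of $\epsilon-\epsilon_0\MI$, $\mu-\mu_0\MI$, $\bJ$ and $\bK$ sit at positive distance from $\partial B_R$, so in an annular neighbourhood of $\partial B_R$ the fields solve the homogeneous constant-coefficient system \eqref{eq:MaxwellHomog} and are $C^\infty$ there by elliptic regularity; in the interior, $H^1$-regularity follows from interior regularity for the (elliptic) second-order Maxwell system with $W^{1,\infty}$ coefficients, using $\nabla\times\bE,\nabla\times\bH\in L^2$ together with $\nabla\cdot[\epsilon\bE],\nabla\cdot[\mu\bH]\in L^2$. (Alternatively one mollifies $\epsilon,\mu$ to be $C^1$, applies part~(i) of Lemma~\ref{lem:IntegratedM}, and passes to the limit, checking that $\epsilonconstant$, $\muconstant$ and the data norms converge appropriately.) I expect this regularity input to be the main technical obstacle -- just as the analogous density/regularity statements are delicate in the Helmholtz case -- although here the smoothness of $B_R$ makes it considerably milder.

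After integrating, the boundary integrand of \eqref{eq:intmor} is exactly $\bQ_{R\sqrt{\epsilon_0\mu_0}}\cdot\hx$ (by \eqref{eq:Qbeta} together with \eqref{eq:NormalTangentMat}), and Lemma~\ref{lem:infinity} gives $\int_{\partial B_R}\bQ_{R\sqrt{\epsilon_0\mu_0}}\cdot\hx\ge0$, so -- writing the integrated identity in the form ``coercive volume term $=$ data terms $+$ boundary term'' -- this last term may be dropped. The volume term $\int_{B_R}[(\epsilon+(\bx\cdot\nabla)\epsilon)\bE\cdot\conj\bE+(\mu+(\bx\cdot\nabla)\mu)\bH\cdot\conj\bH]$ is bounded below by $\epsilonconstant\N{\bE}_{L^2(B_R;\epsilon)}^2+\muconstant\N{\bH}_{L^2(B_R;\mu)}^2$ by \eqref{eq:GrowthCoeff}. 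Collecting the data terms multiplying $\conj\bE$ into $\Re\int_{B_R}\epsilon\bw_1\cdot\conj\bE$ -- with $\bw_1$ a fixed linear combination of $\bx\times\bK$ (coming from the Rellich term $\bK\cdot(\epsilon\conj\bE\times\bx)=(\bx\times\bK)\cdot(\epsilon\conj\bE)$), $\epsilon^{-1}\bJ$ (from $-\beta\,\bJ\cdot\conj\bE$) and $\wn^{-1}\epsilon^{-1}\bx\,(\nabla\cdot\bJ)$ (from the divergence term) -- and those multiplying $\conj\bH$ into $\Re\int_{B_R}\mu\bw_2\cdot\conj\bH$ symmetrically (with $\bx\times\bJ$, $\mu^{-1}\bK$ and $\wn^{-1}\mu^{-1}\bx\,(\nabla\cdot\bK)$), one arrives exactly at the hypothesis of Lemma~\ref{lem:E1}.

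Applying Lemma~\ref{lem:E1} with $\MM_1=\epsilon$, $\MM_2=\mu$, $\bv_1=\bE$, $\bv_2=\bH$, $m_1=\epsilonconstant$, $m_2=\muconstant$ and $\Xi=0$ then yields $\epsilonconstant\N{\bE}_{L^2(B_R;\epsilon)}^2+\muconstant\N{\bH}_{L^2(B_R;\mu)}^2\le\epsilonconstant^{-1}\N{\bw_1}_{L^2(B_R;\epsilon)}^2+\muconstant^{-1}\N{\bw_2}_{L^2(B_R;\mu)}^2$. The remaining step is computational: bound $\N{\bw_1}_{L^2(B_R;\epsilon)}$ and $\N{\bw_2}_{L^2(B_R;\mu)}$ by $2R$ times the bracketed quantities in \eqref{eq:thm:BoundSmooth_weighted}, using $|\bx|\le R$ on $B_R$, the identities $\N{\epsilon^{-1}\bv}_{L^2(B_R;\epsilon)}=\N{\bv}_{L^2(B_R;\epsilon^{-1})}$ and $\N{\mu^{-1}\bv}_{L^2(B_R;\mu)}=\N{\bv}_{L^2(B_R;\mu^{-1})}$, and $\epsilon^{-1}\bx\cdot\bx\le\epsilonmin^{-1}R^2$ (likewise for $\mu$); the triangle inequality and squaring then produce \eqref{eq:thm:BoundSmooth_weighted} with precisely the constant $4R^2$. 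Finally, taking $\bJ=\bK=\bzero$ in the resulting bound forces $\bE=\bH=\bzero$, giving uniqueness, and existence then follows from the general theory in \cite{PiWeWi01}. Besides the regularity input, the only point demanding care is the term-by-term bookkeeping that lands precisely the stated combination of norms and constants.
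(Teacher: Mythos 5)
Your proposal follows essentially the same route as the paper's proof: integrate the Morawetz identity \eqref{eq:morid1} over $B_R$ with the constant multiplier $\beta\equiv R\sqrt{\epsilon_0\mu_0}$, substitute the PDE and the divergence relations $\nabla\cdot[\epsilon\bE]=(\ri\wn)^{-1}\nabla\cdot\bJ$, $\nabla\cdot[\mu\bH]=-(\ri\wn)^{-1}\nabla\cdot\bK$, discard the $\partial B_R$ term via Lemma~\ref{lem:infinity}, lower-bound the volume term via \eqref{eq:GrowthCoeff}, and close with Lemma~\ref{lem:E1}. The data vectors $\bw_1$ and $\bw_2$ you collect, and the final $4R^2$ bookkeeping, match the paper's.

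The one place you are hand-waving is the $H^1(B_R)^3$ regularity required to invoke Part~(ii) of Lemma~\ref{lem:IntegratedM}. Appealing to ``interior regularity for the (elliptic) second-order Maxwell system with $W^{1,\infty}$ coefficients'' is not a statement one can cite off the shelf: $\nabla\times(\mu^{-1}\nabla\times\cdot)-\omega^2\epsilon$ is not elliptic in the usual sense (it annihilates all gradients), so the standard interior-regularity machinery does not apply to it directly. The curl-plus-weighted-div information you list is the right input, but it needs to be converted into $H^1$ regularity by a concrete mechanism. The paper does this by the regular decomposition $\bE=\bz+\nabla\phi$ with $\bz\in H^1(B_{2R})^3$, then noting $\nabla\cdot[\epsilon\nabla\phi]=\nabla\cdot[\epsilon\bE]-\nabla\cdot[\epsilon\bz]\in L^2(B_{2R})$, and applying \emph{scalar} interior elliptic regularity for $\nabla\cdot[\epsilon\nabla\cdot]$ (with $\epsilon\in C^{0,1}$) to get $\phi\in H^2(B_R)$ and hence $\bE\in H^1(B_R)^3$. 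Your alternative --- mollify $\epsilon,\mu$ to $C^1$, use Part~(i) of Lemma~\ref{lem:IntegratedM}, and pass to the limit --- could also be made to work, but you would then have to track how $\epsilonconstant,\muconstant$ and the weighted data norms depend on the mollification parameter (exactly the kind of care the paper expends on the genuinely rough case in Theorem~\ref{thm:BoundRough}); the regular-decomposition route avoids that entirely.
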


\begin{proof}
The Maxwell equations \eqref{eq:first_order} and the conditions $\bJ,\bK\in H(\dive;\IR^3)$ give 
$\nabla\cdot[\epsilon\bE],\nabla\cdot[\mu\bH]\in L^2(\IR^3)$.
We now show that $\bE, \bH \in H^1(B_R)^3$ (aiming to apply the integrated Morawetz identity \eqref{eq:intmor} with $D=B_R$ using Part (ii) of Lemma \ref{lem:IntegratedM}).
Indeed
by the regular decomposition lemma for functions in $H(\curl;B_{2R})$ (see, e.g., \cite[Lemma 2.4]{HiptmairActa}) there exist $\bz \in H^1(B_{2R})^3
$
 and $\phi \in H^1(B_{2R})$ such that $\bE= \bz + \nabla \phi$. 
The condition $\nabla\cdot[\epsilon\bE]\in L^2(\IR^3)$ implies that 
\beqs
\nabla\cdot [\epsilon \nabla\phi]
=  \nabla\cdot[\epsilon\bE]-\nabla\cdot [\epsilon \bz]
\eeqs
is in $L^2(B_{2R})$ (since $\bz\in H^1(B_{2R})^{ 3}$ and $\epsilon\in W^{1,\infty}(\Rea^3,\SPD)$). 
By interior regularity of the operator $(\nabla\cdot [\epsilon \nabla])^{-1}$ (see, e.g., \cite[Theorem 4.16]{MCL00}), and using that $\epsilon\in W^{1,\infty}(B_{2R},\SPD)= C^{0,1}(\overline{B_{2R}},\SPD)$, we have $\phi \in H^2_{\loc}(B_{2R})$ and so in particular in $H^2(B_R)$; therefore $\bE \in H^1(B_R)^{ 3}$.
Identical arguments show that $\bH \in H^1(B_R)^{ 3}$.

Applying the integrated Morawetz identity \eqref{eq:intmor} with $D=B_R$ is justified by Part (ii) of Lemma \ref{lem:IntegratedM}, and then
substituting in $\nabla\cdot[\epsilon\bE]=(\ri\wn)^{-1}\nabla\cdot\bJ$, $\nabla\cdot[\mu\bH]=(-\ri \wn)^{-1}\nabla\cdot\bK$ and \eqref{eq:first_order}, and recalling that $\epsilon = \epsilon_0\MI$ and $\mu = \mu_0\MI$ on $\partial B_R$, we obtain that, for all $\beta\in C^1(\conj{B_R})$,
\begin{align}\nonumber
&
\gepsilon \wnorm{\epsilon}{B_R}{\bE}^2 + \gmu \wnorm{\mu}{B_R}{\bH}^2
\\ \nonumber
&\leq
\int_{B_R}
\big(\epsilon+(\bx\cdot\nabla)\epsilon\big)\bE\cdot\conj\bE+\big(\mu+(\bx\cdot\nabla)\mu\big)\bH\cdot\conj\bH
\\ \nonumber
&=2\int_{B_R}\!\!\Re\bigg\{\bK\cdot(\epsilon\conj\bE\times\bx+\beta\conj \bH)
+\bJ\cdot(\mu\conj\bH\times\bx-\beta\conj\bE)
+ (\conj\bE\cdot\bx)\frac{\nabla\cdot\bJ}{\ri\wn}
-(\conj\bH\cdot\bx)\frac{\nabla\cdot\bK}{\ri\wn}
+\nabla\beta\cdot\bE\times\conj\bH\bigg\}\hspace{-1mm}
\\ \nonumber
&\quad-\int_{\partial B_R}
\left( \epsilon_0\big|\bE_N\big|^2 - \epsilon_0\big|\bE_T\big|^2 
+ \mu_0\big|\bH_N\big|^2 - \mu_0\big|\bH_T\big|^2\right) (\bx\cdot\hn) 
\\ \label{eq:EuanTemp1new}
&\quad
-\int_{\partial B_R} 2\Re\bigg\{ (\bE_T\cdot\bx_T) (\epsilon_0\conj\bE\cdot\hn) +(\bH_T\cdot\bx_T) (\mu_0\conj\bH\cdot\hn)
+ \beta \bE_T\cdot \conj\bH_T\times \hn\bigg\}.
\end{align}
The choice $\beta=R\sqrt{\epsilon_0 \mu_0}$ implies both that $\nabla\beta=\bzero$ and that the terms integrated over $\deB_R$ on the right-hand side of \eqref{eq:EuanTemp1new} equal $-\bQ_{R\sqrt{\epsilon_0 \mu_0}}$ \eqref{eq:Pavia1}, thanks to \eqref{eq:NormalTangentMat}; therefore, using the inequality \eqref{eq:ML}, \eqref{eq:EuanTemp1new} becomes
\begin{align}\nonumber
&
\gepsilon \wnorm{\epsilon}{B_R}{\bE}^2 + \gmu \wnorm{\mu}{B_R}{\bH}^2
\leq
\int_{B_R}
\big(\epsilon+(\bx\cdot\nabla)\epsilon\big)\bE\cdot\conj\bE+\big(\mu+(\bx\cdot\nabla)\mu\big)\bH\cdot\conj\bH
\\ \nonumber
&\le
2\int_{B_R}\!\Re\bigg\{\bK\cdot(\epsilon\conj\bE\times\bx+R\sqrt{\epsilon_0 \mu_0}\conj \bH)
+\bJ\cdot(\mu\conj\bH\times\bx-R\sqrt{\epsilon_0 \mu_0}\conj\bE)
+(\conj\bE\cdot\bx)\frac{\nabla\cdot\bJ}{\ri\wn}
-(\conj\bH\cdot\bx)\frac{\nabla\cdot\bK}{\ri\wn}
\bigg\}
\\
&
=
2\Re \left \{
\int_{B_R}
\epsilon \left (
\bx \times \bK
-R\sqrt{\epsilon_0 \mu_0}\epsilon^{-1}\bJ
+
\epsilon^{-1}\bx \frac{\nabla\cdot\bJ}{\ri\wn}
\right )
\cdot
\conj\bE
\right \}
\nonumber
\\
&
\hspace{2cm}+
2\Re \left \{
\int_{B_R}
\mu
\left (
\bx \times \bJ
+
R\sqrt{\epsilon_0 \mu_0}\mu^{-1} \bK
-
\mu^{-1}\bx \frac{\nabla\cdot\bK}{\ri\wn}
\right )
\cdot
\conj\bH
\right \}.\label{eq:diverge1}
\end{align}
By Lemma \ref{lem:E1} with $\bv_1 =\bE, \bv_2=\bH$, $\MM_1=\epsilon, \MM_2=\mu$, $m_1=\epsilonconstant$, $m_2=\muconstant$, and $\Xi=0$,
\beqs
\begin{aligned}
\gepsilon \wnorm{\epsilon}{B_R}{\bE}^2
+
\gepsilon \wnorm{\mu}{B_R}{\bH}^2
&\leq
\frac{4}{\gepsilon}
\wnorm{\epsilon}{B_R}
{
\bx \times \bK
-
R\sqrt{\epsilon_0 \mu_0}\epsilon^{-1}\bJ
+
\epsilon^{-1}\bx \frac{\nabla\cdot\bJ}{\ri\wn}
}^2
\\
&\qquad+
\frac{4}{\gmu}
\wnorm{\mu}{B_R}
{
\bx \times \bJ
+
R\sqrt{\epsilon_0 \mu_0}\mu^{-1} \bK
-
\mu^{-1}\bx \frac{\nabla\cdot\bK}{\ri\wn}
}^2
\end{aligned}
\eeqs
and the result \eqref{eq:thm:BoundSmooth_weighted} follows.%
\end{proof}

Theorem \ref{thm:BoundSmooth} follows from combining Lemma \ref{lem:BoundSmooth_old} with the following lemma.

\begin{lemma}\mythmname{Bound with right-hand side in $H(\div;B_R)$ implies bound with right-hand side in $L^2(B_R)$}
\label{lem:HdivtoL2}
Suppose that $\epsilon,\mu$ are such that the solution of  \eqref{eq:first_order}--\eqref{eq:intro_SM} with $\bJ,\bK \in H(\div;B_R)$ exists, is unique, and satisfies the bound \eqref{eq:thm:BoundSmooth_weighted}.
Then the solution to \eqref{eq:first_order}--\eqref{eq:intro_SM} with $\BJ, \bK\in \BL^2(B_R)$ exists, is unique, and satisfies 
\eqref{eq:thm:BoundSmooth}.
\end{lemma}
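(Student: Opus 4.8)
The plan is to reduce the $L^2$ right-hand side case to the $H(\div;B_R)$ right-hand side case by a Helmholtz-type decomposition of $\bJ$ and $\bK$. The key observation is that we cannot control $\|\nabla\cdot\bJ\|_{B_R}$ for general $\bJ\in\BL^2(B_R)$, but the bound \eqref{eq:thm:BoundSmooth_weighted} contains $\|\nabla\cdot\bJ\|_{B_R}$ divided by $\omega$, so we only need to handle the divergence-free part of the data via \eqref{eq:thm:BoundSmooth_weighted} and treat the gradient part separately. First I would write $\bJ = \bJ_0 + \nabla p$ where $\nabla\cdot\bJ_0 = 0$ and $p\in H^1_0(B_R)$ solves $\Delta p = \nabla\cdot\bJ$ in $B_R$ (extended by zero outside, so $\bJ_0$ has compact support in $\overline{B_R}$); similarly $\bK = \bK_0 + \nabla q$. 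Then $\bJ_0, \bK_0 \in H(\div;B_R)$ with zero divergence, so the solution $(\bE_0,\bH_0)$ of \eqref{eq:first_order}--\eqref{eq:intro_SM} with data $(\bJ_0,\bK_0)$ exists, is unique, and satisfies \eqref{eq:thm:BoundSmooth_weighted} with the divergence terms dropped.

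Next I would deal with the gradient parts. Setting $\bE_1 := (\ri\wn)^{-1}\epsilon^{-1}\nabla p$, one checks that $(\bE_1, \bzero)$ solves $\ri\wn\epsilon\bE_1 + \nabla\times\bzero = \nabla p$ and $-\ri\wn\mu\bzero + \nabla\times\bE_1 = \nabla\times\bE_1$; the issue is that this does not by itself solve the Maxwell system with data $(\nabla p, \bzero)$ because $\nabla\times\bE_1 \neq \bzero$ in general. So instead I would observe that the pair $(\nabla p,\bzero)$ can be absorbed by noting $\nabla p = \nabla\times(\text{nothing})$—rather, the cleanest route is: $\bJ = \bJ_0 + \nabla p$ with $p$ supported in $\overline{B_R}$, and one simply estimates directly that the contribution of $\nabla p$ to the solution is $O(\omega^{-1})$. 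Concretely, I would test the weak/distributional form: the solution operator applied to $(\nabla p, \bzero)$ produces fields whose weighted $L^2$ norm is bounded using $\|\nabla p\|_{L^2(B_R;\epsilon^{-1})}$ together with a factor $\omega^{-1}$, because $\nabla p = \nabla\cdot(p\,\MI)$ and integration by parts against the (smooth, compactly supported inside $B_R$) test fields moves a derivative onto the solution, which is controlled by the already-established bound on $(\bE_0,\bH_0)$ — this is exactly the standard duality/bootstrap argument used for the Helmholtz analogue \cite[Theorem~2.5]{GrPeSp:18}.

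The cleanest way to organise this, and the one I would actually write, is: (a) existence and uniqueness for $\BL^2$ data follows from existence and uniqueness for $H(\div)$ data by a density argument — approximate $\bJ,\bK\in\BL^2(B_R)$ by $\bJ^{(n)},\bK^{(n)}\in C^\infty_{\rm comp}(B_R)\subset H(\div;B_R)$, note that the map data $\mapsto$ solution is bounded $\BL^2\to\BL^2$ on the dense subspace (this uses the bound only for divergence-free-after-projection pieces plus the $\omega^{-1}$ estimate on gradient pieces), hence extends continuously, and the limit solves the problem with $\BL^2$ data while uniqueness is inherited; (b) for the quantitative bound \eqref{eq:thm:BoundSmooth}, split $\bJ = \bJ_0 + \nabla p$, $\bK = \bK_0 + \nabla q$ as above, apply \eqref{eq:thm:BoundSmooth_weighted} (divergence terms vanishing) to the $(\bJ_0,\bK_0)$ part using $\|\bJ_0\|_{L^2(B_R;\epsilon)}$, etc., estimate the $\nabla p, \nabla q$ parts with the $\omega^{-1}$ gain, and combine. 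In the combination one uses Young's inequality together with $R\geq \omega^{-1}$ vs.\ $R<\omega^{-1}$ case distinction to produce the $\max\{8R^2(\cdots),\omega^{-2}\}$ form, and the norm equivalences $\|\bx\times\bK\|_{L^2(B_R;\epsilon)} \le R\,\epsmax^{1/2}\epsmin^{-1/2}\|\bK\|_{L^2(B_R;\mu^{-1})}^{1/2}\cdots$ — i.e., matching the weight on $\bK$ from $\mu$ to $\mu^{-1}$ and absorbing the $\epsilon$ vs.\ $\mu$ mismatch into the stated constants.

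The main obstacle I anticipate is the bookkeeping in step (b): the bound \eqref{eq:thm:BoundSmooth_weighted} is stated with $\bK$ measured in the $\|\cdot\|_{L^2(B_R;\epsilon)}$ norm and with the cross-product $\bx\times\bK$, whereas the target \eqref{eq:thm:BoundSmooth} uses $\|\bK\|_{L^2(B_R;\mu^{-1})}$; converting between these, keeping track of $\epsmax,\epsmin,\mumax,\mumin,\epz,\muz$ and getting exactly the constants $8$ (resp.\ $4$ when one source vanishes) and the $\max\{\cdot,\cdot\}$ structure, is where the real work lies. The handling of the gradient part $\nabla p$ is conceptually the subtle point but technically short once one commits to the duality argument; the density/well-posedness part is routine given that existence-uniqueness for $H(\div)$ data is assumed in the hypothesis.
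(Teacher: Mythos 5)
Your proposal has a genuine gap in the treatment of the gradient part of the data, and you flag it yourself without resolving it: after correctly noting that $(\bE_1,\bzero):=((\ri\wn)^{-1}\epsilon^{-1}\nabla p,\bzero)$ does \emph{not} solve the Maxwell system with data $(\nabla p,\bzero)$ because $\nabla\times\bE_1\neq\bzero$, you appeal to an unspecified ``duality/bootstrap argument,'' and the density argument in your step (a) cannot rescue this because the boundedness of the solution map on the dense subspace requires the very estimate on gradient data that is missing. The paper avoids the obstruction entirely by a different manoeuvre: rather than decomposing the data $\bJ=\bJ_0+\nabla p$ and solving two problems, it shifts the \emph{solution}, working with $(\bE-\nabla p,\,\bH-\nabla q)$ where $p,q\in H^1_0(B_R)$ solve the \emph{weighted} variational problems $\ri\wn(\epsilon\nabla p,\nabla v)_{L^2(B_R)}=(\bJ,\nabla v)_{L^2(B_R)}$ and $\ri\wn(\mu\nabla q,\nabla v)_{L^2(B_R)}=(\bK,\nabla v)_{L^2(B_R)}$ for all $v\in H^1_0(B_R)$ (not your unweighted $\Delta p=\nabla\cdot\bJ$). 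Since $\nabla\times\nabla p=\bzero$, the shifted pair solves \eqref{eq:first_order} with data $\bJ-\ri\wn\epsilon\nabla p$ and $\bK-\ri\wn\mu\nabla q$, which are divergence-free by construction and compactly supported, so \eqref{eq:thm:BoundSmooth_weighted} applies with the $\N{\nabla\cdot\bJ}$ and $\N{\nabla\cdot\bK}$ terms gone, and one never needs to solve or bound a problem with gradient data at all. Crucially, the weighted choice makes $\ri\wn\nabla p$ the $\epsilon$-orthogonal projection of $\epsilon^{-1}\bJ$, yielding the exact Pythagoras identity
\begin{equation*}
\wn^2\N{\nabla p}^2_{L^2(B_R;\epsilon)}+\N{\bJ-\ri\wn\epsilon\nabla p}^2_{L^2(B_R;\epsilon^{-1})}=\N{\bJ}^2_{L^2(B_R;\epsilon^{-1})},
\end{equation*}
which delivers in one stroke both the divergence-free piece and the $\wn^{-2}$ contribution to the $\max$; this step breaks with your unweighted Laplacian, since the resulting decomposition is no longer orthogonal in the $\epsilon$-weighted inner product. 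The remaining bookkeeping you anticipate (the $8$, the $\max$, and the weight swap $\N{\cdot}^2_{L^2(B_R;\epsilon)}\leq\epsilonmax\mumax\N{\cdot}^2_{L^2(B_R;\mu^{-1})}$) is indeed routine from there via the triangle inequality $\N{\bE}^2\leq 2\N{\bE-\nabla p}^2+2\N{\nabla p}^2$. Existence and uniqueness for $L^2$ data follow by the same shift, without any density passage: the modified problem has divergence-free data so a unique solution by hypothesis, and adding back $\nabla p,\nabla q$ (extended by zero, which lands in $H_{\rm loc}(\curl;\Rea^3)$ since $p,q\in H^1_0(B_R)$) produces the unique solution of the original problem.
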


\begin{proof}
Let $\bJ,\bK\in L^2(B_R)$. Let $p,q\in H^1_0(B_R)$ be the unique solutions of the variational problems
\beq\label{eq:BVPpq}
\ri \wn (\epsilon\nabla p, \nabla v)_{L^2(B_R)} = (\bJ, \nabla v )_{L^2(B_R)} \quad\tand\quad
\ri \wn (\mu\nabla q, \nabla v)_{L^2(B_R)} = (\bK, \nabla v )_{L^2(B_R)} 
\eeq
for all $v\in H^1_0(B_R)$; i.e.,
$\ri\omega\nabla p$ is the orthogonal projection of
$\epsilon^{-1}\bJ$ in the $(\epsilon \cdot,\cdot)_{L^2(B_R)}$ inner product, so that 
\begin{align}\nonumber
\omega^2 \wnorm{\epsilon}{B_R}{\nabla p}^2
+
\wnorm{\epsilon^{-1}}{B_R}{\bJ-\ri \omega\epsilon\nabla p}^2
&=
\omega^2 \wnorm{\epsilon}{B_R}{\nabla p}^2
+
\wnorm{\epsilon}{B_R}{\epsilon^{-1} \bJ-\ri \omega \nabla p}^2\\
&=
\wnorm{\epsilon}{B_R}{\epsilon^{-1} \bJ}^2
=
\wnorm{\epsilon^{-1}}{B_R}{\bJ}^2\label{eq:orthog1}
\end{align}
and similarly
\begin{align}
\omega^2 \wnorm{\mu}{B_R}{\nabla q}^2
+
\wnorm{\mu^{-1}}{B_R}{\bK-\ri \omega\mu\nabla q}^2
=
\wnorm{\mu^{-1}}{B_R}{\bK}^2.\label{eq:orthog2}
\end{align}
Extend $p$ and $q$ to functions on $\Rea^3$ via extension by zero.
Then $\nabla p, \nabla q \in H(\curl;B_R)$ and $\nabla p,\nabla q \in H_{\rm loc}(\curl; \Rea^3\setminus\overline{B_R})$. 
By the definition of the weak derivative and integration by parts, a piecewise $H(\curl)$ function is in $H(\curl)$ if and only if  its tangential trace is continuous across the relevant interface. Since $(\nabla p)_T$ is the surface gradient of the trace of $p$ (which is zero), $\nabla p\in H_{\rm loc}(\curl, \Rea^3)$, and similarly for $\nabla q$. 

Therefore $\bE-\nabla p$, $\bH- \nabla q \in H_{\rm loc}(\curl; \Rea^3)$ 
and satisfy the Silver--M\"uller radiation condition \eqref{eq:intro_SM}. Furthermore,
\beqs
\ri \omega\epsilon(\bE- \nabla p) + \nabla\times (\bH- \nabla q) = \bJ - \ri \omega\epsilon \nabla p 
\eeqs
and
\beqs
-\ri \omega\mu(\bH- \nabla q) + \nabla\times (\bE- \nabla p) = \bK - \ri \omega\mu\nabla q.
\eeqs
By \eqref{eq:BVPpq},
$\nabla\cdot[\bJ-\ri\wn\epsilon\nabla p]=\nabla\cdot[\bK-\ri\wn\mu\nabla q]=0$; therefore \eqref{eq:thm:BoundSmooth_weighted} applies to $(\bE-\nabla p,\bH-\nabla q)$ and
\begin{align*}\nonumber
&\epsilonconstant\N{\bE-\nabla p }^2_{L^2(B_R;\epsilon)} + \muconstant \N{\bH -\nabla q}^2_{L^2(B_R;\mu)} 
\\ \nonumber
&\hspace{2cm}\le
\frac{4 R^2}{\epsilonconstant} \bigg(
\N{\bK - \ri \omega\mu \nabla q}_{L^2(B_R;\epsilon)} 
+\sqrt{\epsilon_0 \mu_0}\N{\bJ - \ri \omega\epsilon \nabla p}_{L^2(B_R;\epsilon^{-1})}
\bigg)^2\\
&\hspace{3cm}+\frac{4R^2}{\muconstant} \bigg(
\N{\bJ - \ri \omega\epsilon \nabla p}_{L^2(B_R;\mu)}
+\sqrt{\epsilon_0 \mu_0}\N{\bK - \ri \omega\mu \nabla q}_{L^2(B_R;\mu^{-1})}
\bigg)^2.
\end{align*}
Combining this with 
\begin{align*}
\epsilonconstant \N{\bE}^2_{L^2(B_R;\epsilon)} &\leq 2\epsilonconstant
 \Big(\N{\bE-\nabla p}^2_{L^2(B_R;\epsilon)} + \N{\nabla p}^2_{L^2(B_R;\epsilon)}\Big),
 \end{align*}
(and the analogous inequality for $\bH$), we obtain
\begin{align}\nonumber
\epsilonconstant\N{\bE }^2_{L^2(B_R)} + \muconstant\N{\bH}^2_{L^2(B_R)} 
&\le
\frac{8 R^2}{\epsilonconstant} \Big(
\N{\bK - \ri \omega\mu \nabla q}_{L^2(B_R;\epsilon)} 
+\sqrt{\epsilon_0 \mu_0}\N{\bJ - \ri \omega\epsilon \nabla p}_{L^2(B_R;\epsilon^{-1})}
\Big)^2\\ \nonumber
&\quad+\frac{8R^2}{\muconstant} \Big(
\N{\bJ - \ri \omega\epsilon \nabla p}_{L^2(B_R;\mu)} 
+\sqrt{\epsilon_0 \mu_0}\N{\bK - \ri \omega\mu \nabla q}_{L^2(B_R;\mu^{-1})}
\Big)^2\\
&\quad 
+ 2 \epsilonconstant \N{\nabla p}^2_{L^2(B_R;\epsilon)}
  +2 \muconstant\N{ \nabla q}^2_{L^2(B_R;\mu)}.
\label{eq:analogue1}
\end{align}
We now use the inequalities $(a+b)^2\leq 2(a^2+b^2)$ for $a,b>0$, 
\beqs
\N{\bv}^2_{L^2(B_R;\epsilon)}\leq \epsilonmax\mumax \N{\bv}^2_{L^2(B_R;\mu^{-1})} \quad\tfa \bv\in \Com^3,
\eeqs
and the analogous inequality with $\epsilon$ and $\mu$ swapped, to find
\begin{align}\nonumber
&\epsilonconstant\N{\bE }^2_{L^2(B_R)} + \muconstant\N{\bH}^2_{L^2(B_R)} \\ \nonumber
&\le
2\max\left\{ 8R^2 \left(\frac{\epsilonmax\mumax}{\epsilonconstant} + \frac{\epsilon_0 \mu_0}{\muconstant}\right), \frac{\muconstant}{\omega^2}\right\}
\bigg( \N{\bK - \ri \omega\mu \nabla q}_{L^2(B_R;\mu^{-1})}^2 + \omega^2\N{ \nabla q}^2_{L^2(B_R;\mu)}\bigg)\\
&
\quad+2\max\left\{ 8R^2 \left(\frac{\epsilonmax\mumax}{\muconstant} + \frac{\epsilon_0 \mu_0}{\epsilonconstant}\right), \frac{\epsilonconstant}{\omega^2}\right\}
\bigg( \N{\bJ - \ri \omega\epsilon \nabla p}_{L^2(B_R;\epsilon^{-1})}^2 + \omega^2\N{ \nabla p}^2_{L^2(B_R;\epsilon)}\bigg).
\label{eq:analogue2}
\end{align}
The result 
\eqref{eq:thm:BoundSmooth}
 then follows by using
\eqref{eq:orthog1} and \eqref{eq:orthog2}.

If $\bK=\bzero$, then $q=0$, and we don't need to use the inequality $(a+b)^2\leq 2(a^2+b^2)$ for $a,b>0$ in \eqref{eq:analogue1}. The $8$ in the term involving $\bJ$ in both \eqref{eq:analogue2} and \eqref{eq:thm:BoundSmooth} then reduces to $4$. Similarly if $\bJ=\bzero$, then the $8$ 
in the term involving $\bK$ in both \eqref{eq:analogue2} and \eqref{eq:thm:BoundSmooth} reduces to $4$.
\end{proof}

\subsection{Proof of Theorem \ref{thm:BoundRough}}
\label{subsec:Proofs2}

We first note that when $\epsilon$ and $\mu$ are as in the assumption \eqref{eq:RoughCoefficients}, $\epsilon_0=\epsilonmax$ and $\mu_0=\mumax$. 

\ble\mythmname{Approximation of $\eps$ and $\mu$ by $\epsilon_\delta,\mu_\delta \in C^\infty$}
\label{lem:mollifier}
Suppose $\epsilon,\mu\in L^\infty(\Rea^3,\SPD)$ are as in Theorem \ref{thm:BoundRough}; i.e., they are defined by \eqref{eq:RoughCoefficients}, with $\Pi_{\epsilon},\Pi_\mu \in L^\infty(\Rea^3,\SPD)$ monotonically non-decreasing in the radial direction in the sense of \eqref{eq:monotone}.

Then there exists $\delta_0>0$ such that for all $0<\delta\leq \delta_0$ there exists $\epsilon_\delta,\mu_\delta \in C^\infty(\Rea^3, \SPD)$
such that 
\beq\label{eq:limits2}
\epsilonmin\preceq \epsilon_\delta(\bx)\preceq \epsilonmax 
\quad\tand\quad
\mumin\preceq \mu_\delta(\bx)\preceq \mumax 
\quad \tfa \bx \in \Rea^d, 
\eeq
for any $R>0$,
\beq\label{eq:approx}
\epsilon_\delta\to\epsilon \,\tand\, \mu_\delta\to\mu\quad\tas\quad \delta \tendo \quad \text{both a.e.\ in $\IR^3$ and in }L^2(B_R),
\eeq
and $\epsilon_\delta$ and $\mu_\delta$ satisfy 
\beq\label{eq:GrowthCoeff2}
\MI + \big((\bx\cdot\nabla)\epsilon_\delta\big)\epsilon_\delta^{-1} \succeq 1
\quad\tand\quad
\MI + \big((\bx\cdot\nabla)\mu_\delta\big)\mu_\delta^{-1} \succeq 1.
\eeq
\ele

\bpf
The plan is to mollify $\mu$ and $\epsilon$; the key point, however, is that standard mollification in $\Rea^d$ is not guaranteed to preserve monotonicity in the radial direction, and thus we need to mollify along spherical coordinates.
To avoid issues with the singularities of the spherical coordinates at the origin and along the $x_3$ axis we treat separately these regions.

To see that standard mollification does not preserve radial monotonicity, consider $\epsilon=\epsilon_0/2$ in the half sphere $\{\|\bx\|<1,\; x_1>0\}$ and $\epsilon=\epsilon_0$ elsewhere.
Then the convolution $\epsilon*\psi_\delta$ between $\epsilon$ and a standard mollifier $\psi_\delta\in C^\infty(\IR^3)$ with $\supp(\psi_\delta)=\overline{B_\delta}$ and $0<\delta<1/2$ satisfies $(\epsilon*\psi_\delta)(\bzero)>\epsilon_0/2=(\epsilon*\psi_\delta)(1/2,0,0)$.

We use classical spherical coordinates $\bx=(\rho\cos\theta\sin\varphi, \rho\sin\theta\sin\varphi, \rho\cos\varphi)$, and 
we define the ``candy-shaped'' domain $\bowtie_\delta$ by 
\begin{equation}\label{eq:CandyShape}
\bowtie_\delta:=\{\rho<\delta \} \cup \{\rho<R,\;\varphi< \delta\} \cup  \{\rho<R,\;\varphi> \pi-\delta\}\subset\IR^3,
\qquad 0<\delta<\min\{1/2,R/2\},
\end{equation}
with $R>0$ as in \eqref{eq:support_scatterer}; see Figure~\ref{fig:CandyShape}.
The set $\bowtie_\delta$ has volume $|\bowtie_\delta|=\calO(\delta^2)$ as $\delta\to0$.

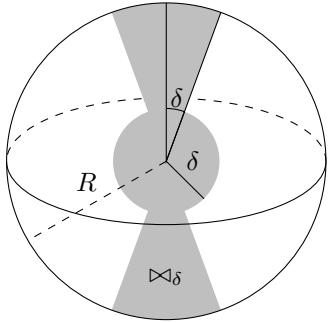
\begin{figure}[htb]\begin{center}
\begin{tikzpicture}[scale=0.7]
\draw[dashed] (3,0) arc (0:180:3 and 1.2);  % half ellipse!
\fill[lightgray] (0,0) circle(1);
\fill[lightgray] (0,0) -- (70:3) arc(70:110:3) -- cycle;
\fill[lightgray] (0,0) -- (250:3) arc(250:290:3) -- cycle;
%\fill[lightgray] (0,0) -- (1,-3)--(-1,-3);
\draw(0,0)--(.71,-.71); \draw (.5,0) node{$\delta$};
\draw(0,3)--(0,0)--({3*cos(70)},{3*sin(70)}); \draw (.2,1.2) node{$\delta$};
%\draw (.3,1.3) node{$\delta$};
\draw(0,0) circle(3);
\draw(0,0) -- (70:1) arc (70:90:1);
\draw (0,-2.2) node{$\bowtie_\delta$};
\draw[dashed](0,0)--(-3*.866,-1.5);
\draw (-1.5,-.4) node{$R$};
\draw(-3,0) arc (-180:0:3 and 1.2);  % half ellipse!
\end{tikzpicture}\end{center}
\caption{The shaded region represents the set $\bowtie_\delta\subset B_R$ defined in \eqref{eq:CandyShape}.
The parameter $0<\delta<\min\{1/2,R/2\}$ is both the radius of the inner ball and the opening of the two cones.}
\label{fig:CandyShape}
\end{figure}

% We also define
% $$
% \epsilon_\delta:=
% \begin{cases}
% \epsilonmin\MI &\iin \bowtie_\delta,\\
% \epsilon &\iin\IR^3\setminus \bowtie_\delta.
% \end{cases}
% % \hspace{20mm}\raisebox{-5mm}{
% % \begin{tikzpicture}[scale=0.3]
% % \fill[lightgray] (0,0) circle(1);
% % \fill[lightgray] (0,0) -- (70:3) arc(70:110:3) -- cycle;
% % \fill[lightgray] (0,0) -- (250:3) arc(250:290:3) -- cycle;
% % %\fill[lightgray] (0,0) -- (1,-3)--(-1,-3);
% % \draw(0,0)--(.71,-.71); \draw (.5,0) node{$\delta$};
% % \draw(0,3)--(0,0)--({3*cos(70)},{3*sin(70)}); \draw (.5,3.2) node{$\delta$};
% % %\draw (.3,1.3) node{$\delta$};
% % \draw(0,0) circle(3);
% % \draw(0,0) -- (70:1) arc (70:90:1);
% % \draw (0,-2.5) node{$\bowtie_\delta$};
% % \end{tikzpicture}}
% $$
% By eqref{eq:lambda}, $\epsilon_\delta(\bx)=\epsilonmin\MI+\Pi$, where $\Pi$ satisfies the radial monotonicity condition eqref{eq:monotone}.

We first consider the approximation of a scalar $f\in L^\infty(\IR^3)$.
To mollify $f$ along the three spherical coordinates, we define (almost everywhere) a transformed function $f^\bigstar\in L^\infty(\IR^3)$ as
\begin{align*}
&f^\bigstar:\{(\rho,\theta,\varphi),\;\rho\in\IR,\;\theta\in\IR,\;\varphi\in\IR\}\to\IR,\\
&f^\bigstar(\rho,\theta,\varphi)=
\begin{cases}
f(\rho\cos\theta\sin\varphi, \rho\sin\theta\sin\varphi, \rho\cos\varphi) & \text{if } \rho>2\delta, \;2\delta<\varphi<\pi-2\delta,\; 0<\theta<2\pi,\\
f_{\min}:=\essinf f &\text{if } \rho<2\delta \text{ or }\varphi\notin(2\delta,\pi-2\delta),\\
\text{extended $2\pi$-periodically in $\theta$}.
\end{cases}
\end{align*}
Let $0\le\psi^1\in C^\infty(\IR^3)$, be supported in $B_1$ and have integral $\int_{\IR^3}\psi^1=1$.
Let $f^\bigstar_\delta\in C^\infty(\IR^3)$ be the convolution of $f^\bigstar$ with $\psi^\delta(\bullet):=\delta^{-3}\psi^1(\bullet/\delta)$, which is supported in $B_\delta$.
Then $f^\bigstar_\delta=f_{\min}$ in $\{\rho\le\delta\}\cup\{\varphi\le\delta\}\cup\{\varphi\ge\pi-\delta\}$, is $2\pi$-periodic in $\theta$, and $\lim_{\delta\to0}f_\delta^\bigstar=f^\bigstar$ both in $L^2\loc(\IR^3)$ and almost everywhere; see, e.g., \cite[\S4.2, Theorem 1]{EvGa92}.
Moreover $f_{\min}\le f_\delta^\bigstar\le \N{f}_{L^\infty}$ by the properties of the mollification.

Finally, $f_\delta:\IR^3\to\IR$ defined by $f_\delta(\rho\cos\theta\sin\varphi, \rho\sin\theta\sin\varphi, \rho\cos\varphi):=f_\delta^\bigstar(\rho,\theta,\varphi)$ is smooth, takes values in the interval $[f_{\min},\N{f}_{L^\infty}]$, satisfies $f_\delta=f_{\min}$ on $\bowtie_\delta$, and approximates $f$ both almost everywhere and in $L^2(B_R)$:
%$\lim_{\delta\to0}f_\delta|_{B_1}=f|_{B_1}$ in $L^2(B_1)$.................
\begin{align*}
\N{f-f_\delta}_{L^2(B_R)}^2
&=\int_{\bowtie_\delta} |f-f_\delta|^2 + \int_{B_R\setminus\bowtie_\delta} |f-f_\delta|^2 \\
&=\int_{\bowtie_\delta} |f-f_{\min}|^2 
+\int_\delta^R \int_0^{2\pi}\int_\delta^{\pi-\delta} |f^\bigstar-f_\delta^\bigstar|^2\sin\varphi\di\varphi\di\theta\rho^2\di\rho
\\
&\le 4|\bowtie_\delta| \N{f}_{L^\infty}^2 + R^2 \N{f^\bigstar-f_\delta^\bigstar}^2_{L^2((\delta,R)\times(0,2\pi)\times(\delta,\pi-\delta))} %\\&
\xrightarrow{\delta\to0}0.
\end{align*}
Moreover, if $f$ is radially monotonic in the sense that $\essinf_{\bx\in\IR^3} [f((1+h)\bx)-f(\bx)]\ge0$ for all $h\ge0$, then 
$\essinf_{(\rho,\theta,\varphi)\in\IR^3} [f^\bigstar(\rho+h,\theta,\varphi)-f^\bigstar(\rho,\theta,\varphi)]\ge0$, $f_\delta^\bigstar$ is monotonic in $\rho$ by the definition of convolution, and $f_\delta$ is radially monotonic in pointwise sense:
$f_{\delta}((1+h)\bx)\ge f_{\delta}(\bx)$ for all $\bx\in\IR$, $h\ge0$.

The arguments extends to matrix-valued fields $\epsilon$ satisfying the assumptions in the assertion.
The field $\epsilon^\bigstar: \IR^3\to \SPD$ is defined similarly to $f^\bigstar$ with diagonal value $\epsilonmin\MI$ in place of $f_{\min}$.
The smooth field $\epsilon^\bigstar_\delta$ is obtained by componentwise mollification of $\epsilon^\bigstar$, and the pullback $\epsilon_\delta$ is defined as $f_\delta$.
Then $\epsilon_\delta\to\epsilon$ with the same argument for the scalar case and each component of $\epsilon_\delta$ is in $C^\infty(\IR^3)$.
The monotonicity follows:
for all $\bx\in\IR^3$, $\bv\in\IC^3$, $h>0$,
\begin{align*}
&\epsilon_\delta\big((1+h)\bx\big)\bv\cdot\conj\bv -\epsilon_\delta(\bx)\bv\cdot\conj\bv 
=\epsilon_\delta^\bigstar\big((1+h)\rho,\theta,\varphi\big)\bv\cdot\conj\bv -\epsilon_\delta^\bigstar(\rho,\theta,\varphi)\bv\cdot\conj\bv \\
&=\int_{\IR^3} \underbrace{\Big(\epsilon^\bigstar\big((1+h)\rho',\theta',\varphi'\big)\bv\cdot\conj\bv -\epsilon^\bigstar(\rho',\theta',\varphi')\bv\cdot\conj\bv\Big)}_{\ge0 \text{ a.e.}}
\psi^\delta(\rho-\rho',\theta-\theta',\varphi-\varphi')\di\varphi'\di\theta'\di\rho'
\ge0
\end{align*}
where the term in the integral is non-negative because $\epsilon((1+h)\bx)\succeq\epsilon(\bx)$ a.e., $\epsilon\succeq\epsilonmin$ a.e.\ (since $\Pi_{\epsilon} \in L^\infty(\IR^3,\SPD)$) and because of the shape of the region where $\epsilon^\bigstar=\epsilonmin\MI$.
In particular $\epsilon_\delta$ is positive definite and $\epsilonmin\preceq \epsilon_\delta\preceq\epsilonmax$.
Since $\epsilon_\delta$ is smooth and radially monotonic, $(\bx\cdot\nabla)\epsilon_\delta\succeq0$ and \eqref{eq:GrowthCoeff2} follows.
\epf

The proof of Lemma~\ref{lem:mollifier} also corrects the proof of \cite[Theorem 2.7]{GrPeSp:18}, where it was assumed that standard mollification in Cartesian coordinates preserves radial monotonicity.

We now prove Theorem \ref{thm:BoundRough}. In this proof, we use the weighted norm 
on $L^2(B_R)\times L^2(B_R)$ 
corresponding to the left-hand side of \eqref{eq:thm:BoundSmooth}, i.e.,
\beqs%\label{eq:weighted_norm2}
\Tnorm{(\bE, \bH)}_{\epsilon,\mu}^2:= 
\epsilonconstant\|\bE\|_{\weightedLtepsilon}^2+ \muconstant \N{\bH}_{\weightedLtmu}^2.
\eeqs

\bpf[Proof of Theorem \ref{thm:BoundRough}]
By \cite[Theorem 2.10]{PiWeWi01} it is sufficient to show that the bound \eqref{eq:thm:BoundSmooth} holds under the assumption that the solution of \eqref{eq:first_order} exists.	

Without loss of generality, we assume that $\bJ$ and $\bK$ are compactly supported in $B_R$. Indeed, once the bound \eqref{eq:thm:BoundSmooth} is proved for such $\bJ$ and $\bK$, since compactly-supported functions are dense in $L^2(B_R)$ 
and the bound is independent of the supports of $\bJ$ and $\bK$, the bound \eqref{eq:thm:BoundSmooth} holds for all $\bJ, \bK\in L^2(B_R)$.

By the density of $C^\infty(\overline{\domainimp})^3$ in $H(\curl;B_R)$ (see, e.g., \cite[Theorem 3.26]{MON03}), 
given $\eta>0$, $\epsilon, \mu, \bE,$ and $\bH$, 
there exists $\bE_\eta, \bH_\eta\in C^\infty(\overline{\domainimp})^3$ such that 
$\supp(\bE-\bE_\eta)\cup \supp(\bH-\bH_\eta) \Subset B_R$ and
\begin{align}
&\N{\nabla\times (\bE- \bE_\eta)}_{L^2(B_R)}^2+ 
\N{\nabla\times (\bH - \bH_\eta)}_{L^2(B_R)}^2+ 
\Big|
\Tnorm{(\bE,\bH)}_{\epsilon,\mu}^2 -\Tnorm{(\bE_\eta,\bH_\eta)}_{\epsilon,\mu}^2
\Big|
\leq \eta/4.
\label{eq:newapprox0}
\end{align}
Indeed, let $\chi \in C^\infty(B_R)$ be such that $\chi \equiv 1$ on $\supp \bJ \cup \supp \bK \cup \supp(\epsilon-\epsilon_0\MI) \cup \supp(\mu-\mu_0\MI)$ and $\chi \equiv 0$ in a neighbourhood of $\partial B_R$.
Since $\epsilon=\epsilon_0\MI$ and $\mu= \mu_0\MI$ on $\supp(1-\chi)$, by elliptic regularity, $(1-\chi)\bE$ and $(1-\chi)\bH$ are both $C^\infty$. We can then take $\bE_\eta$ to be $(1-\chi)\bE$ plus a mollified $\chi \bE$, and similarly for $\bH_\eta$; this ensures that 
$\bE-\bE_\eta$ and $\bH- \bH_\eta$ are supported near $\supp\,\chi$.

With $\epsilon_\delta$ and $\mu_\delta$ as in Lemma~\ref{lem:mollifier}, observe that
\beq\label{eq:newapprox00}
\Tnorm{(\bE_\eta,\bH_\eta)}_{\epsilon,\mu}^2 -\Tnorm{(\bE_\eta,\bH_\eta)}_{\epsilon_\delta,\mu_\delta}^2 
= \epsilonconstant \int_{B_R}(\epsilon-\epsilon_\delta) \bE_\eta\cdot\conj \bE_\eta
+ \muconstant \int_{B_R}(\mu-\mu_\delta) \bH_\eta\cdot\conj \bH_\eta,
\eeq
and thus, combining \eqref{eq:newapprox0} and \eqref{eq:newapprox00}, we obtain 
\begin{align}\nonumber
\Tnorm{(\bE,\bH)}^2_{\epsilon,\mu} &\leq
\Tnorm{(\bE_\eta,\bH_\eta)}^2_{\epsilon_\delta,\mu_\delta} + \eta/4 \\
&\quad + \Big( 
\epsilonconstant \N{\bE_\eta}^2_{L^\infty(B_R)} \N{\epsilon-\epsilon_\delta}_{L^1(B_R)}
+\muconstant \N{\bH_\eta}^2_{L^\infty(B_R)} \N{\mu-\mu_\delta}_{L^1(B_R)}
\Big).
\label{eq:newapprox1}
\end{align}
By \eqref{eq:first_order},
\beq\label{eq:PDEperturb1}
\ri\wn\epsilon_\delta\bE_\eta+\nabla\times\bH_\eta =
\bJ + 
\ri \wn (\epsilon_\delta-\epsilon)\bE_\eta + \ri\omega\epsilon(\bE_\eta-\bE)
+\nabla\times(\bH_\eta-\bH)
\eeq
and
\beq\label{eq:PDEperturb2}
\ri\wn\mu_\delta\bH_\eta-\nabla\times\bE_\eta =
\bK+
\ri \wn (\mu_\delta-\mu)\bH_\eta + \ri\omega\mu(\bH_\eta-\bH)
-\nabla\times(\bE_\eta-\bE),
\eeq
and the right-hand sides of these PDEs are supported in $B_R$. 
By Lemma \ref{lem:mollifier}, $\epsilon_\delta$ and $\mu_\delta$ satisfy the conditions of Theorem \ref{thm:BoundSmooth}, and thus, by \eqref{eq:thm:BoundSmooth},
\begin{align}\nonumber
\Tnorm{(\bE_\eta,\bH_\eta)}^2_{\epsilon_\delta,\mu_\delta} 
\leq C_1 
&\wnorm{\epsilon^{-1}_\delta}{B_R}{ \bJ + 
\ri \wn (\epsilon_\delta-\epsilon)\bE_\eta + \ri\omega\epsilon(\bE_\eta-\bE)
+\nabla\times(\bH_\eta-\bH)
}^2
\\
&
+C_2\wnorm{\mu_\delta^{-1}}{B_R}{
\bK+
\ri \wn (\mu_\delta-\mu)\bH_\eta + \ri\omega\mu(\bH_\eta-\bH)
-\nabla\times(\bE_\eta-\bE)}^2
\label{eq:newapprox2}
\end{align}
where we have abbreviated the constants on the right-hand side of \eqref{eq:thm:BoundSmooth} to $C_1$ and $C_2$ to keep the notation concise.
The crucial point is that the $C_1$ and $C_2$ corresponding to $\epsilon_\delta$ and $\mu_\delta$ can be taken to be the $C_1$ and $C_2$ corresponding to $\epsilon$ and $\mu$ by \eqref{eq:limits2} and \eqref{eq:GrowthCoeff2}; thus, in particular, $C_1$ and $C_2$ are independent of $\delta$.

We now claim that the approximation properties \eqref{eq:approx} and \eqref{eq:newapprox2} imply that 
given $\epsilon, \mu$, $\bE, \bH$ (and associated $\bJ, \bK$), $\wn>0$, and $\zeta>0$ one can choose $\eta= \eta(\zeta)>0$ and $\delta=\delta(\zeta,\eta)>0$  such that 
the difference between the right-hand side of  \eqref{eq:newapprox2} and $C_1\|\bJ\|^2_{L^2(B_R;\epsilon_\delta^{-1})} + C_2\|\bK\|^2_{L^2(B_R;\mu_\delta^{-1})}$
is $\leq \zeta/4$, so that
\beq\label{eq:nursery1}
\Tnorm{(\bE_\eta,\bH_\eta)}^2_{\epsilon_\delta,\mu_\delta} \leq C_1\|\bJ\|^2_{L^2(B_R;\epsilon_\delta^{-1})} + C_2\|\bK\|^2_{L^2(B_R;\mu_\delta^{-1})} + \zeta/4.
\eeq

Once this claim is established, without loss of generality, we can further assume that 
$\eta\leq \zeta$, and reduce $\delta$ (if necessary) so that the last term on the right-hand side of \eqref{eq:newapprox1} is $\leq \zeta/4$. 
Then combining \eqref{eq:newapprox1} and \eqref{eq:nursery1}, and using that $\eta\leq \zeta$, we obtain that 
\beq\label{eq:nursery2}
\Tnorm{(\bE,\bH)}^2_{\epsilon,\mu}\leq C_1\|\bJ\|^2_{L^2(B_R;\epsilon_\delta^{-1})} + C_2\|\bK\|^2_{L^2(B_R;\mu_\delta^{-1})} + \zeta/4 + \zeta/4 + \zeta/4.
\eeq
As $\delta\to 0$, 
\beqs
\wnorm{\epsilon_\delta^{-1}}{B_R}{\bJ}^2 = \int_{B_R}\epsilon_\delta^{-1} \bJ\cdot\conj \bJ \to \int_{B_R} \epsilon^{-1} \bJ \cdot\conj{\bJ} = \wnorm{\epsilon^{-1}}{B_R}{\bJ}^2
\eeqs
(and similarly for $\wnorm{\mu_\delta^{-1}}{B_R}{\bK}^2$) by the dominated convergence theorem (since $\epsilon_\delta\to \epsilon$ and $\mu\to \mu_\delta$ pointwise almost everywhere by Lemma \ref{lem:mollifier}).
We can therefore decrease $\delta$ again (if necessary) so that 
\beq\label{eq:nursery3}
C_1\|\bJ\|^2_{L^2(B_R;\epsilon_\delta^{-1})} + C_2\|\bK\|^2_{L^2(B_R;\mu_\delta^{-1})}
\leq C_1\|\bJ\|^2_{L^2(B_R;\epsilon^{-1})} + C_2\|\bK\|^2_{L^2(B_R;\mu^{-1})} +\zeta/4,
\eeq
and combining \eqref{eq:nursery2} and \eqref{eq:nursery3} we obtain that 
\beqs
\Tnorm{(\bE,\bH)}^2_{\epsilon,\mu} \leq C_1 \N{\bJ}^2_{L^2(B_R;\epsilon^{-1})} + C_2\N{\bK}^2_{L^2(B_R;\mu^{-1})} + \zeta.
\eeqs
Since $\zeta>0$ was arbitrary, the bound \eqref{eq:thm:BoundSmooth} follows.

We now complete the proof by establishing the claim above. First observe that
\begin{align}\nonumber
&\N{ \bJ + 
\ri \wn (\epsilon_\delta-\epsilon)\bE_\eta + \ri\omega\epsilon(\bE_\eta-\bE)
+\nabla\times(\bH_\eta-\bH)
}^2_{L^2(B_R;\epsilon_\delta^{-1})}\\ \nonumber
&\leq 
2\N{ \bJ}_{L^2(B_R;\epsilon_\delta^{-1})}\Big( 
\wn\N{ \epsilon_\delta-\epsilon}_{L^2(B_R;\epsilon_\delta^{-1})}\N{\bE_\eta}_{L^\infty(B_R)} 
+ 
\N{
\ri\omega\epsilon(\bE_\eta-\bE)
+\nabla\times(\bH_\eta-\bH)}_{L^2(B_R;\epsilon_\delta^{-1})}\Big)\\
&\,
+\N{ \bJ}^2_{L^2(B_R;\epsilon_\delta^{-1})} 
+ 2\wn^2\N{ \epsilon_\delta-\epsilon}_{L^2(B_R;\epsilon_\delta^{-1})}^2\N{\bE_\eta}_{L^\infty(B_R)}^2 
+ 2\N{
\ri\omega\epsilon(\bE_\eta-\bE)
+\nabla\times(\bH_\eta-\bH)}_{L^2(B_R;\epsilon_\delta^{-1})}^2,\label{eq:Friday0}
\end{align}
and, similarly, 
\begin{align}\nonumber
&\N{ \bK +\ri \wn (\mu_\delta-\mu)\bH_\eta + \ri\omega\mu(\bH_\eta-\bH)
-\nabla\times(\bE_\eta-\bE)}^2_{L^2(B_R;\mu_\delta^{-1})}\\ \nonumber
&\leq 
2\N{ \bK}_{L^2(B_R;\mu_\delta^{-1})}\Big( 
\wn\N{ \mu_\delta-\mu}_{L^2(B_R;\mu_\delta^{-1})}\N{\bH_\eta}_{L^\infty(B_R)} 
+ 
\N{
\ri\omega\epsilon(\bH_\eta-\bH)
-\nabla\times(\bE_\eta-\bE)}_{L^2(B_R;\mu_\delta^{-1})}\Big)\\
&\,
+\N{ \bK}^2_{L^2(B_R;\mu_\delta^{-1})} + 2\wn^2\N{\mu_\delta-\mu}_{L^2(B_R;\mu_\delta^{-1})}^2\N{\bH_\eta}_{L^\infty(B_R)}^2 
+ 2\N{
\ri\omega\epsilon(\bH_\eta-\bH)
-\nabla\times(\bE_\eta-\bE)}_{L^2(B_R;\mu_\delta^{-1})}^2.\label{eq:Friday1}
\end{align}
We now need to make the terms on the right-hand sides of \eqref{eq:Friday0} and \eqref{eq:Friday1} that are not 
$\|\bJ\|^2_{L^2(B_R;\epsilon_\delta^{-1})} $ and 
$\| \bK\|^2_{L^2(B_R;\mu_\delta^{-1})}$, respectively, small.
We first deal with the terms that involve $\bE-\bE_\eta$ and $\bH-\bH_\eta$ (and thus will be made small by choosing $\eta$ small).
By \eqref{eq:limits2}, 
\begin{align*}\nonumber
&2\N{ \bJ}_{L^2(B_R;\epsilon_\delta^{-1})}
\N{
\ri\omega\epsilon(\bE_\eta-\bE)
+\nabla\times(\bH_\eta-\bH)}_{L^2(B_R;\epsilon_\delta^{-1})}
+ 2\N{
\ri\omega\epsilon(\bE_\eta-\bE)
+\nabla\times(\bH_\eta-\bH)}_{L^2(B_R;\epsilon_\delta^{-1})}^2\\
&\leq 2\epsilonmin^{-1}\Big(\N{ \bJ}_{L^2(B_R)}
\N{
\ri\omega\epsilon(\bE_\eta-\bE)
+\nabla\times(\bH_\eta-\bH)}_{L^2(B_R)}
+ 
\N{
\ri\omega\epsilon(\bE_\eta-\bE)
+\nabla\times(\bH_\eta-\bH)}_{L^2(B_R)}^2\Big)
\end{align*}
and 
\begin{align*}
&2\N{ \bK}_{L^2(B_R;\mu_\delta^{-1})}
\N{
\ri\omega\mu(\bH_\eta-\bH)
+\nabla\times(\bE_\eta-\bE)}_{L^2(B_R;\mu_\delta^{-1})}
+ 2\N{
\ri\omega\mu(\bH_\eta-\bH)
-\nabla\times(\bE_\eta-\bE)}_{L^2(B_R;\mu_\delta^{-1})}^2\\
&\leq 2\mumin^{-1}\Big(\N{ \bK}_{L^2(B_R)}
\N{
\ri\omega\mu(\bH_\eta-\bH)
+\nabla\times(\bE_\eta-\bE)}_{L^2(B_R)}
+ \N{
\ri\omega\mu(\bH_\eta-\bH)
-\nabla\times(\bE_\eta-\bE)}_{L^2(B_R)}^2\Big).
\end{align*}
Therefore, by \eqref{eq:newapprox0} and the bounds \eqref{eq:limits} on $\epsilon$ and $\mu$, given $\zeta>0$, we can choose $\eta>0$ such that
\begin{align}\nonumber
&2\N{ \bJ}_{L^2(B_R;\epsilon^{-1}_\delta)}
\N{
\ri\omega\epsilon(\bE_\eta-\bE)
+\nabla\times(\bH_\eta-\bH)}_{L^2(B_R;\epsilon^{-1}_\delta)}
\\ \label{eq:Friday2}
&\hspace{3cm}+ 2\N{
\ri\omega\epsilon(\bE_\eta-\bE)
+\nabla\times(\bH_\eta-\bH)}_{L^2(B_R;\epsilon^{-1}_\delta)}^2\leq \frac{\zeta}{8C_1}
\end{align}
and
\begin{align}\nonumber
&2\N{ \bK}_{L^2(B_R;\mu^{-1}_\delta)}
\N{
\ri\omega\mu(\bH_\eta-\bH)
+\nabla\times(\bE_\eta-\bE)}_{L^2(B_R;\mu^{-1}_\delta)}\\ \label{eq:Friday3}
&\hspace{3cm}+ 2\N{
\ri\omega\mu(\bH_\eta-\bH)
-\nabla\times(\bE_\eta-\bE)}_{L^2(B_R;\mu^{-1}_\delta)}^2\leq \frac{\zeta}{8C_1}.
\end{align}

We now deal with the remaining terms on the right-hand sides of \eqref{eq:Friday0} and \eqref{eq:Friday1}, which will be made small by making $\delta$ small.
By \eqref{eq:approx} (and arguing similarly to above using \eqref{eq:limits2} to deal with the norms weighted by $\epsilon_\delta^{-1}$ and $\mu^{-1}_\delta$), given $\zeta$ and $\eta$, we can choose $\delta>0$ such that 
\beq\label{eq:Friday4}
2\wn^2\N{ (\epsilon_\delta-\epsilon)}_{L^2(B_R;\epsilon^{-1}_\delta)}^2\N{\bE_\eta}_{L^\infty(B_R)}^2 
+2\N{ \bJ}_{L^2(B_R\epsilon^{-1}_\delta)}
\wn\N{ (\epsilon_\delta-\epsilon)}_{L^2(B_R;\epsilon^{-1}_\delta)}\N{\bE_\eta}_{L^\infty(B_R)}\leq\frac{\zeta}{8C_1}
\eeq
and
\beq\label{eq:Friday5}
2\wn^2\N{ (\mu_\delta-\mu)}_{L^2(B_R;\mu^{-1}_\delta)}^2\N{\bH_\eta}_{L^\infty(B_R)}^2 
+2\N{ \bK}_{L^2(B_R;\mu^{-1}_\delta)}
\wn\N{ (\mu_\delta-\mu)}_{L^2(B_R;\mu^{-1}_\delta)}\N{\bH_\eta}_{L^\infty(B_R)}\leq\frac{\zeta}{8C_1}.
\eeq
Combining \eqref{eq:Friday0}--\eqref{eq:Friday5}, we obtain that 
\begin{align*}
&C_1\N{ \bJ + 
\ri \wn (\epsilon_\delta-\epsilon)\bE_\eta + \ri\omega\epsilon(\bE_\eta-\bE)
+\nabla\times(\bH_\eta-\bH)
}^2_{L^2(B_R\epsilon^{-1}_\delta)}\leq 
C_1\N{ \bJ}^2_{L^2(B_R;\epsilon^{-1}_\delta)} + \zeta/4
\end{align*}
and
\begin{align*}
C_2\N{
\bK+
\ri \wn (\mu_\delta-\mu)\bH_\eta + \ri\omega\mu(\bH_\eta-\bH)
-\nabla\times(\bE_\eta-\bE)}^2_{L^2(B_R;\mu^{-1}_\delta)}
\leq 
C_2\N{ \bK}^2_{L^2(B_R;\mu^{-1}_\delta)} + \zeta/4,
\end{align*}
and the claim (and hence also the result) is proved.
\epf

\bre\mythmname{Obtaining the bound \eqref{eq:unweighted} in unweighted norms}
\label{rem:unweighted}
The proof of the unweighted-norm bound \eqref{eq:unweighted} diverges from the proof of \eqref{eq:thm:BoundSmooth} at \eqref{eq:diverge1}, which is equivalent to 
\begin{align}\nonumber
&\int_{B_R}
\big(\epsilon+(\bx\cdot\nabla)\epsilon\big)\bE\cdot\conj\bE+\big(\mu+(\bx\cdot\nabla)\mu\big)\bH\cdot\conj\bH
\\
&\le 
 2\int_{B_R}\!\Re\bigg\{\conj\bE\cdot\bigg(\epsilon\bx\times\bK
-R\sqrt{\epsilon_0\mu_0}\bJ + \bx \frac{\nabla\cdot\bJ}{\ri\wn}\bigg)
+\conj\bH \cdot\bigg(\mu (\bx\times \bJ)+R\sqrt{\mu_0\epsilon_0}\bK 
-\bx\frac{\nabla\cdot\bK}{\ri\wn}\bigg)
\bigg\}.\nonumber
\end{align}
The conditions \eqref{eq:GrowthCoeffunweighted} on $\epsilon$ and $\mu$ and Lemma \ref{lem:E1} with 
$\bv_1=\bE$, $\bv_2=\bH$, $m_1=\epsilon_*$, $m_2=\mu_*$, and 
$\MM_j=\MI$, $j=1,2$, imply that 
\begin{align*}
&\epsilon_*\N{\bE}^2_{L^2(B_R)} + \mu_*\N{\bH}^2_{L^2(B_R)}\\
&\le 
\frac{4}{\epsilon_*} \N{
\epsilon (\bx\times \bK) -R\sqrt{\mu_0\epsilon_0}\bJ
+\bx\frac{\nabla\cdot\bJ}{\ri\wn}
}^2_{L^2(B_R)}
+ 
\frac{4}{\mu_*} \N{
\mu (\bx\times \bJ)+R\sqrt{\mu_0\epsilon_0}\bK
-\bx\frac{\nabla\cdot\bK}{\ri\wn}
}^2_{L^2(B_R)}
\\
&\le
\frac{4}{\epsilon_*} \bigg(
R\N{\epsilon}_{L^\infty(B_R)} \N{\bK}_{L^2(B_R)} 
+R\sqrt{\mu_0\epsilon_0}\N{\bJ}_{L^2(B_R)}
+\frac R\wn \N{\nabla\cdot\bJ}_{L^2(B_R)}
\bigg)^2\\
&\qquad+\frac{4}{\mu_*} \bigg(
R\N{\mu}_{L^\infty(B_R)} \N{\bJ}_{L^2(B_R)} 
+R\sqrt{\mu_0\epsilon_0}\N{\bK}_{L^2(B_R)}
+\frac R\wn \N{\nabla\cdot\bK}_{L^2(B_R)}
\bigg)^2.
\end{align*}
where 
$\|\epsilon\|_{L^\infty(B_R)} = \esssup_{\bx \in B_R}|\epsilon(\bx)|_2=\esssup_{\bx \in B_R}\sup_{\bzero\ne\bv\in\IR^3}(|\epsilon(\bx)\bv|_2/|\bv|_2)$, and similarly for $\|\mu\|_{L^\infty(B_R)}$.
Since the $2$-norm of a matrix $\MA$ is the maximum eigenvalue of $\MA^*\MA$, 
$\|\mu\|_{L^\infty(B_R)}\leq \mumax$ and $\|\epsilon\|_{L^\infty(B_R)}\leq \epsilonmax$. 

The proof of \eqref{eq:unweighted} then continues as in the weighted case. The additional factors of $\epsilonmax/\epsilonmin$ and $\mumax/\mumin$ in \eqref{eq:unweighted} compared to \eqref{eq:thm:BoundSmooth} come from using \eqref{eq:orthog1} and \eqref{eq:orthog2} to obtain the bounds 
\beqs
\N{\bJ- \ri\omega\epsilon \nabla p}^2_{L^2(B_R)} \leq \frac{\epsilon_{\max}}{\epsilon_{\min}} \N{\bJ}_{L^2(B_R)}^2
\quad\tand\quad
\N{\bK- \ri\omega\mu \nabla q}_{L^2(B_R)}^2 \leq \frac{\mu_{\max}}{\mu_{\min}} \N{\bK}_{L^2(B_R)}^2.
\eeqs
The ratios $\epsilonmax/\epsilonmin$ and $\mumax/\mumin$ do not appear in the second arguments of the maxima on the right-hand side of \eqref{eq:thm:BoundSmooth} since coercivity implies that
\beqs%\label{eq:coercive_bounds}
\omega\N{\nabla p }_{L^2(B_R)}\leq \frac{1}{\epsilon_{\min}}\N{\bJ}_{L^2(B_R)} 
\quad\tand\quad
\omega\N{\nabla q }_{L^2(B_R)}\leq \frac{1}{\mu_{\min}}\N{\bK}_{L^2(B_R)}.
\eeqs
\ere

\subsection{Proof of Corollary \ref{cor:scat}}

Let 
$\chi(\bx):=\max\{0,\min\{1,\frac{R-|\bx|}{R-\Rscat}\}\}$ (i.e., $\chi$ is piecewise-linear in the radial direction); then 
$\N{\chi}_{L^\infty(B_R)}=1$ and 
$\N{\nabla\chi}_{L^\infty(B_R)} =\frac1{R-\Rscat}$. 
Furthermore, since the tangential components of $\nabla \chi$ are continuous on the boundary of the support of $\nabla \chi$ (which is a spherical shell), $\nabla\chi \in H(\curl ;\Rea^3)$. 
Let $\widetilde{\bE}:= \chi \bE^I + \bE^S = \bE^T -(1-\chi) \bE^I$, and similarly for $\widetilde{\bH}$. 
Since $\widetilde{\bE}= \bE^S$ and $\widetilde{\bH}= \bH^S$ for $|\bx|\ge R$, $\widetilde{\bE}, \widetilde{\bH}$ 
satisfy the Silver--M\"uller condition \eqref{eq:intro_SM}. Furthermore $\widetilde{\bE}, \widetilde{\bH}\in H_{\rm loc}(\curl;\Rea^3)$ and satisfy \eqref{eq:first_order}
 with 
$\bJ:=  \nabla\chi \times \bH^I $ and 
$\bK:= \nabla\chi \times \bE^I.$ 
Applying \eqref{eq:thm:BoundSmooth} to $\widetilde{\bE}$ and $\widetilde{\bH}$, we obtain 
\begin{align*}
&\epsilonconstant\big\|\widetilde{\bE} \big\|^2_{L^2(B_R;\epsilon)} + \muconstant\big\|\widetilde{\bH}\big\|^2_{L^2(B_R;\mu)} 
\\
&\hspace{1cm}\le \| \nabla\chi \|^2_{L^\infty(B_R)}\bigg(
2\max\left\{ 8R^2 \left(\frac{\epsilonmax\mumax}{\muconstant} + \frac{\epsilon_0 \mu_0}{\epsilonconstant}\right), \frac{\epsilonconstant}{\omega^2}\right\}
\N{ \bH^I }_{L^2(B_R\setminus B_{\Rscat};\epsilon^{-1})}^2
\\
&\hspace{4cm}+2\max\left\{ 8R^2 \left(\frac{\epsilonmax\mumax}{\epsilonconstant} + \frac{\epsilon_0 \mu_0}{\muconstant}\right), \frac{\muconstant}{\omega^2}\right\}
\N{ \bE^I}_{L^2(B_R\setminus B_{\Rscat};\mu^{-1})}^2\bigg),
\end{align*}
where we have used the fact that $\epsilon$ and $\mu$ are scalar-valued on $B_R\setminus B_{\Rscat}$ to ``pull out'' $\|\nabla \chi\|_{L^\infty(B_R)}$ from the weighted norms of $\bH^I$ and $\bE^I$ on the right-hand side.

The bound \eqref{eq:BoundScat} then follows using the inequality $\|\bE^T\|_{L^2(B_R;\epsilon)}^2\leq 2 \|\widetilde{\bE}\|^2_{L^2(B_R;\epsilon)}+ 2 \|(1-\chi)\bE^I\|^2_{L^2(B_R;\epsilon)}$, its analogue with $\bE$ replaced by $\bH$, and the bounds on $\chi$ above.

\begin{remark}\mythmname{Small contrast limit}
The bound \eqref{eq:BoundScat} is sharp in its $\omega$ dependence, but does not show that the scattered fields $\bE^S$ and $\bH^S$ vanish in the small-contrast limit, i.e.,\ for $\max\{\N{\epsilon-\epz\MI}_{L^\infty(\IR^3)},$ $\N{\mu-\muz\MI}_{L^\infty(\IR^3)}\}\to0$.
One can easily bound the scattered fields $\bE^S$ and $\bH^S$ observing that they solve the Maxwell problem \eqref{eq:first_order}--\eqref{eq:intro_SM} with $\bJ=\ri\wn(\epz\MI-\epsilon)\bE^I$ and $\bK=\ri\wn(\mu-\muz\MI)\bH^I$, however the resulting bound is suboptimal in its $\wn$ dependence.
We note that \cite[Cor.~3.1]{MS17} obtained a bound for the Helmholtz scattering problem  in this latter way; its wavenumber dependence can easily be improved by adapting the proof of Corollary~\ref{cor:scat} to the Helmholtz setting.
\end{remark}

\section{Results for the interior impedance problem}\label{sec:IIP}

\subsection{Statement of the results}

\begin{theorem}\mythmname{Bound on impedance problem with certain $C^1$ coefficients}
\label{thm:ImpedanceSmooth}
Suppose that $\Omega$ is a bounded Lipschitz open set that is
star-shaped with respect to a ball centred at the origin with radius $\rho \Rimp $, where $\Rimp :=\sup\{|\bx|:\bx\in {\domainimp}\}$.
Suppose that $\epsilon,\mu \in C^1(\domainimp;\SPD)$ satisfy \eqref{eq:limits} and that $\vartheta\in L^\infty(\deOimp)$ is uniformly positive.
Let 
\beq\label{eq:Mvartheta}
M_\vartheta:=(3+\rho^{-1})\N{\max\{\vartheta^{-1}|\epsilon|,\vartheta|\mu|\}}_{L^\infty(\deOimp)},
\eeq
where $|\epsilon|$ and $|\mu|$ denote the point values of the matrix norms (induced by the Euclidean vector norm).
Then, given $\bJ,\bK\in L^2(\domainimp)$ and $\bg\in L^2(\deOimp)$, the solution 
$\bE,\bH\in H_{\imp}(\curl;\Omega)$ of \eqref{eq:ImpedanceFirst}--\eqref{eq:ImpedanceBC} exists, is unique, and satisfies
\begin{align}\nonumber
&\epsilonconstant\N{\bE}^2_{L^2(\domainimp;\epsilon)} + \muconstant\N{\bH}^2_{L^2(\domainimp;\mu)}
+2\Rimp \big\|\abs{\epsilon}^{1/2}\bE_T\|_{L^2(\deOimp)}^2
+2\Rimp \big\|\abs{\mu}^{1/2}\bH_T\|_{L^2(\deOimp)}^2\\
\nonumber
&\quad\le 
2\max \left\{8\Rimp^2
\bigg(\frac{\epsilonmax\mumax}{\muconstant}+\frac{M_\vartheta^2}{\epsilonconstant}\bigg), \frac{\epsilonconstant}{ \omega^2}\right\}
\N{\bJ}_{L^2(\domainimp;\epsilon^{-1})}^2
\\
&\qquad+
2\max \left\{
8\Rimp^2\bigg(\frac{\epsilonmax\mumax}{\epsilonconstant}
+\frac{M_\vartheta^2}{\muconstant}\bigg)
, \frac{\muconstant}{\omega^2}\right\}
\N{\bK}_{L^2(\domainimp;\mu^{-1})}^2
+4 \Rimp  M_\vartheta \big\|\vartheta^{-1/2}\bg\big\|_{L^2(\deOimp)}^2,
\label{eq:ImpedanceBound}
\end{align}
where the instances of $8$ on the right-hand side reduce to $4$ if $\bK=\bzero$.
\end{theorem}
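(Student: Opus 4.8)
The strategy mirrors the proof of Theorem~\ref{thm:BoundSmooth}: the plan is to integrate the Morawetz-type identity \eqref{eq:morid1} over $\domainimp$ and to use the impedance boundary condition \eqref{eq:ImpedanceBC}, in place of the Silver--M\"uller radiation condition, to control the boundary term; the constant-coefficient version of this argument is \cite[\S3]{MaxwellPDE} and the Helmholtz analogue is \cite[Theorem~1]{BaChGo:17}. Existence and uniqueness will follow from a standard Fredholm argument: the sesquilinear form associated with \eqref{eq:ImpedanceFirst}--\eqref{eq:ImpedanceBC} on $H_\imp(\curl;\domainimp)$ satisfies a G{\aa}rding inequality (after the usual Helmholtz-type splitting of $H(\curl)$), so it is enough to prove the a priori bound \eqref{eq:ImpedanceBound}, since that bound with $\bJ=\bK=\bzero$, $\bg=\bzero$ forces $\bE=\bH=\bzero$ and hence gives uniqueness.

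First I would, as in Lemma~\ref{lem:HdivtoL2}, reduce to data for which $\nabla\cdot[\epsilon\bE]$ and $\nabla\cdot[\mu\bH]$ lie in $L^2(\domainimp)$: given $\bJ,\bK\in L^2(\domainimp)$ one subtracts $\nabla p,\nabla q$ with $p,q\in H^1_0(\domainimp)$ solving $\ri\wn(\epsilon\nabla p,\nabla v)_{L^2(\domainimp)}=(\bJ,\nabla v)_{L^2(\domainimp)}$ (and similarly for $q$); since $p,q$ have zero Dirichlet trace, $\bE-\nabla p$ and $\bH-\nabla q$ stay in $H_\imp(\curl;\domainimp)$ and satisfy \eqref{eq:ImpedanceBC} with the same $\bg$, while $\nabla\cdot[\epsilon(\bE-\nabla p)]=\nabla\cdot[\mu(\bH-\nabla q)]=0$. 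Together with the trace theory underlying Theorem~\ref{thm:Density} this puts $\bE-\nabla p\in V(\domainimp,\epsilon)$ and $\bH-\nabla q\in V(\domainimp,\mu)$ (the spaces \eqref{eq:V}), so that Part~(i) of Lemma~\ref{lem:IntegratedM} applies. Applying \eqref{eq:intmor} on $D=\domainimp$ with a constant multiplier $\beta$, substituting \eqref{eq:ImpedanceFirst} together with $\nabla\cdot[\epsilon\bE]=(\ri\wn)^{-1}\nabla\cdot\bJ$ and $\nabla\cdot[\mu\bH]=-(\ri\wn)^{-1}\nabla\cdot\bK$, and using that \eqref{eq:GrowthCoeff} gives $\epsilonconstant\N{\bE}^2_{L^2(\domainimp;\epsilon)}\le\int_{\domainimp}\big(\epsilon+(\bx\cdot\nabla)\epsilon\big)\bE\cdot\conj\bE$ (and the analogue for $\bH$), one reaches an inequality $\epsilonconstant\N{\bE}^2_{L^2(\domainimp;\epsilon)}+\muconstant\N{\bH}^2_{L^2(\domainimp;\mu)}\le 2\int_{\domainimp}\Re\{\cdots\}-\int_{\deOimp}B$, in which the volume integrand is linear in $\bJ,\bK,\nabla\cdot\bJ,\nabla\cdot\bK$ and $B$ is the boundary integrand appearing on the right-hand side of \eqref{eq:intmor}.

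The crux is the lower bound for $\int_{\deOimp}B$, and this is the step I expect to be the main obstacle. Using \eqref{eq:NormalTangentMat} to split $B$ into normal- and tangential-trace pieces, the star-shapedness of $\domainimp$ gives $\rho\Rimp\le\bx\cdot\hn\le\Rimp$ and $|\bx|\le\Rimp$ on $\deOimp$; a weighted Young inequality on the mixed $\bE_T$--$\bE_N$ and $\bH_T$--$\bH_N$ cross terms makes the $\bE_N,\bH_N$-quadratic pieces cancel against the $(\bx\cdot\hn)$-weighted normal terms in $B$, at the cost of a remainder bounded by $\rho^{-1}\Rimp\big(|\epsilon||\bE_T|^2+|\mu||\bH_T|^2\big)$. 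The remaining tangential part of $B$ --- together with the multiplier term $2\Re\{\beta\bE_T\cdot\conj\bH_T\times\hn\}$ --- is rewritten using \eqref{eq:ImpedanceBC}: splitting that term in half and substituting $\bH_T\times\hn=\vartheta\bE_T+\bg$ in one half and $\bE_T=\vartheta^{-1}(\bH_T\times\hn-\bg)$ in the other produces $\beta\vartheta|\bE_T|^2+\beta\vartheta^{-1}|\bH_T|^2$ plus $\bg$-dependent terms. Choosing $\beta$ proportional to $\Rimp\N{\max\{\vartheta^{-1}|\epsilon|,\vartheta|\mu|\}}_{L^\infty(\deOimp)}$ makes $\beta\vartheta$ and $\beta\vartheta^{-1}$ large enough to dominate all the positive tangential contributions in $B$ and still leave behind the terms $2\Rimp\big\||\epsilon|^{1/2}\bE_T\big\|_{L^2(\deOimp)}^2$ and $2\Rimp\big\||\mu|^{1/2}\bH_T\big\|_{L^2(\deOimp)}^2$ on the left-hand side of \eqref{eq:ImpedanceBound}; the $\bg$-terms are handled by Young's inequality, partly absorbed into these tangential-trace terms and partly producing $4\Rimp M_\vartheta\big\|\vartheta^{-1/2}\bg\big\|_{L^2(\deOimp)}^2$. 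Keeping all these constants consistent is exactly what forces the quantity $M_\vartheta$ of \eqref{eq:Mvartheta} (with its factor $3+\rho^{-1}$) and the factor $4$ in the $\bg$-term.

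Finally, having absorbed the boundary term, I would bound the volume integrals by Lemma~\ref{lem:E1} (with $\bv_1=\bE$, $\bv_2=\bH$, $\MM_1=\epsilon$, $\MM_2=\mu$, $m_1=\epsilonconstant$, $m_2=\muconstant$, and $\Xi$ the already-bounded boundary contribution), use $\N{\bv}^2_{L^2(\domainimp;\epsilon)}\le\epsilonmax\mumax\N{\bv}^2_{L^2(\domainimp;\mu^{-1})}$ and its symmetric counterpart, and then feed back the orthogonality identities for $\nabla p,\nabla q$ coming from the reduction above --- exactly as in the proof of Lemma~\ref{lem:HdivtoL2} --- which gives the $\max\{\cdot,\cdot\}$ structure and the factors $8$, and (since $q=0$ when $\bK=\bzero$, so the inequality $(a+b)^2\le2(a^2+b^2)$ is then not needed on the $\bJ$-side) the reduction of $8$ to $4$. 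This yields \eqref{eq:ImpedanceBound}.
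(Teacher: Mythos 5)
Your proposal is correct and follows essentially the same approach as the paper, which implements it via Lemmas \ref{lem:ImpedanceFirst}, \ref{lem:HdivtoL2imp}, \ref{lem:PreImpedanceIneq} and \ref{lem:ImpedanceIneq}: a Morawetz-type integration over $\domainimp$ justified by Part~(i) of Lemma~\ref{lem:IntegratedM}, with the boundary contribution controlled using star-shapedness, a weighted Young inequality on the normal-trace cross terms, and the impedance condition, followed by the $H(\div)$-to-$L^2$ data reduction (the paper performs this reduction \emph{after} the Morawetz estimate rather than before, as you do, but the order is immaterial). Two small corrections to your bookkeeping: the remainder from the weighted Young inequality in the boundary estimate is $(2+\rho^{-1})\Rimp\big(|\epsilon||\bE_T|^2+|\mu||\bH_T|^2\big)$, not $\rho^{-1}\Rimp(\cdots)$, and---together with the extra $1$ needed to leave $2\Rimp$-weighted tangential traces on the left---this is exactly what produces the $3+\rho^{-1}$ in $M_\vartheta$; also, no Young inequality is needed on the $\bg$-term, since the impedance condition gives the exact algebraic identity $2\Re\{\conj\bH\times\hn\cdot\bE\}=\vartheta|\bE_T|^2+\vartheta^{-1}|\bH_T|^2-\vartheta^{-1}|\bg|^2$, and the factor $4$ on the $\bg$-term (versus $2$ in Lemma~\ref{lem:ImpedanceFirst}) arises solely from the doubling in the data reduction.
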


In \eqref{eq:ImpedanceBound}, the norm on $\deOimp$ on the left-hand side can be replaced by the more natural, smaller quantity
$\N{\epsilon^{1/2}\bE_T}_{L^2(\deOimp)}^2=\int_{\deO}\epsilon\bE_T\cdot\conj\bE_T$
involving the square-root matrix of $\epsilon$ (which is $\SPD$).%

\begin{theorem}\mythmname{Bound on interior impedance problem with rough coefficients}\label{thm:ImpedanceRough}
Suppose that $\Omega$ is a bounded Lipschitz open set that is
star-shaped with respect to a ball centred at the origin with radius $\rho \Rimp $, where $\Rimp :=\sup\{|\bx|,\bx\in {\domainimp}\}$.
Let $\epsilon,\mu\in L^\infty(\Omega,\SPD)$ be such that 
\eqref{eq:RoughCoefficients} holds,
where $\Pi_{\epsilon},\Pi_\mu \in L^\infty(\domainimp,\SPD)$
are monotonically non-decreasing, in the sense of quadratic forms, in the radial direction.
Let $\vartheta\in L^\infty(\deOimp)$ be uniformly positive and define $M_\vartheta$ by \eqref{eq:Mvartheta}.
Then, given $\bJ,\bK\in L^2(\domainimp)$ and $\bg\in L^2_T(\deOimp)$, the solution 
$\bE,\bH\in H_{\imp}(\curl;\Omega)$ of \eqref{eq:ImpedanceFirst}--\eqref{eq:ImpedanceBC} exists, is unique, and satisfies
\eqref{eq:ImpedanceBound} with 
$\abs{\epsilon}$ and $\abs{\mu}$ replaced by $\epsilonmin$ and $\mumin$, respectively.
\end{theorem}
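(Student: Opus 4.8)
The plan is to follow the proof of Theorem~\ref{thm:BoundRough} essentially verbatim, deducing the rough-coefficient bound from the $C^1$-coefficient bound of Theorem~\ref{thm:ImpedanceSmooth} by a two-scale approximation argument: first mollify the coefficients $\epsilon,\mu$, then mollify the fields $\bE,\bH$. As in that proof, a Fredholm argument (the sesquilinear form associated with \eqref{eq:ImpedanceFirst}--\eqref{eq:ImpedanceBC} on $\Himp$ being a compact perturbation of a coercive one) reduces matters to establishing the a priori bound \eqref{eq:ImpedanceBound} (with $\abs\epsilon,\abs\mu$ replaced by $\epsilonmin,\mumin$) under the \emph{hypothesis} that a solution exists; the bound then also yields uniqueness and hence existence.

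For the coefficient mollification I would apply Lemma~\ref{lem:mollifier} on $B_{\Rimp}$ (after extending $\epsilon,\mu$ to $\Rea^3$ radially, which preserves the radial monotonicity \eqref{eq:monotone}): for all small $\delta>0$ there are $\epsilon_\delta,\mu_\delta\in C^\infty(\overline\Omega,\SPD)$ with $\epsilonmin\preceq\epsilon_\delta\preceq\epsilonmax$, $\mumin\preceq\mu_\delta\preceq\mumax$, with $\epsilon_\delta\to\epsilon$ and $\mu_\delta\to\mu$ both a.e.\ and in $L^2(\Omega)$, and with $\MI+\big((\bx\cdot\nabla)\epsilon_\delta\big)\epsilon_\delta^{-1}\succeq1$ and likewise for $\mu_\delta$. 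Hence $\gepsilon=\gmu=1$ for $(\epsilon_\delta,\mu_\delta)$, and, crucially, all the constants appearing in Theorem~\ref{thm:ImpedanceSmooth} applied with $(\epsilon_\delta,\mu_\delta)$ can be bounded independently of $\delta$ --- in particular the quantity $M_\vartheta$ of \eqref{eq:Mvartheta} computed from $(\epsilon_\delta,\mu_\delta)$ is $\le(3+\rho^{-1})\N{\max\{\vartheta^{-1}\epsilonmax,\vartheta\mumax\}}_{L^\infty(\deOimp)}$.

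Next I would mollify the fields: given $\eta>0$, choose $\bE_\eta,\bH_\eta\in C^\infty(\overline\Omega)^3$ close to $\bE,\bH$ in a topology controlling the $H(\curl;\Omega)$ norms \emph{and} the tangential traces in $L^2_T(\deOimp)$; this is possible after first splitting off the gradient part of the data (as in Lemma~\ref{lem:HdivtoL2}), which reduces matters to fields in $V(\Omega,\epsilon)$ and $V(\Omega,\mu)$, on which $C^\infty(\overline\Omega)^3$ is dense by Lemma~\ref{thm:Density}. Then $(\bE_\eta,\bH_\eta)$ solves \eqref{eq:ImpedanceFirst}--\eqref{eq:ImpedanceBC} with coefficients $(\epsilon_\delta,\mu_\delta)$ and data $\bJ_{\eta,\delta}:=\ri\wn\epsilon_\delta\bE_\eta+\nabla\times\bH_\eta$, $\bK_{\eta,\delta}:=-\ri\wn\mu_\delta\bH_\eta+\nabla\times\bE_\eta$, $\bg_\eta:=\bH_\eta\times\hn-\vartheta(\bE_\eta)_T$, each of which differs from $\bJ$, $\bK$, $\bg$ by a term made small first by taking $\eta$ small (the $\bE-\bE_\eta$, $\bH-\bH_\eta$ contributions, with $\N{\bE_\eta}_{L^\infty(\Omega)},\N{\bH_\eta}_{L^\infty(\Omega)}$ factored out exactly as in \eqref{eq:Friday0}--\eqref{eq:Friday1}) and then by taking $\delta$ small (the $\epsilon-\epsilon_\delta$, $\mu-\mu_\delta$ contributions). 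Applying Theorem~\ref{thm:ImpedanceSmooth} to $(\bE_\eta,\bH_\eta)$ with these coefficients and data, using the $\delta$-uniform constants, and letting $\eta\to0$ and then $\delta\to0$ (dominated convergence gives $\int_\Omega\epsilon_\delta^{-1}\bJ\cdot\conj\bJ\to\int_\Omega\epsilon^{-1}\bJ\cdot\conj\bJ$, etc.), the right-hand side converges to that of \eqref{eq:ImpedanceBound}. On the left-hand side, since $\abs{\epsilon_\delta}\ge\epsilonmin$ and $\abs{\mu_\delta}\ge\mumin$ pointwise, the boundary terms $\Rimp\N{\abs{\epsilon_\delta}^{1/2}(\bE_\eta)_T}_{L^2(\deOimp)}^2$ and $\Rimp\N{\abs{\mu_\delta}^{1/2}(\bH_\eta)_T}_{L^2(\deOimp)}^2$ dominate their counterparts with $\epsilonmin,\mumin$; passing to the limit, and using that the tolerance parameters were arbitrary, yields \eqref{eq:ImpedanceBound} with $\abs\epsilon,\abs\mu$ replaced by $\epsilonmin,\mumin$.

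The main obstacle is the treatment of the boundary. In contrast with the whole-space problem --- where one cuts $\bE,\bH$ off near $\partial B_R$ and the exterior contribution is absorbed using the radiation condition (Lemma~\ref{lem:infinity}) --- here the impedance condition lives on $\partial\Omega$ and the left-hand side of \eqref{eq:ImpedanceBound} carries tangential-trace norms on $\partial\Omega$, so no cut-off near the boundary is available. Consequently the field mollification must be performed in a topology controlling $(\bE-\bE_\eta)_T$ and $(\bH-\bH_\eta)_T$ in $L^2_T(\deOimp)$ --- which is exactly why Lemma~\ref{thm:Density}, the Maxwell analogue of \cite[Theorem~1]{CoD98}, is needed --- and one must verify that the boundary integrals produced by the integrated Morawetz identity (Lemma~\ref{lem:IntegratedM}) are continuous in that topology, so that both the perturbation $\bg_\eta-\bg$ of the boundary data and the boundary terms on the left-hand side converge. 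Checking that Lemma~\ref{lem:mollifier} transfers to the bounded star-shaped domain --- the exceptional ``candy'' set $\bowtie_\delta$ still has measure $\mathcal{O}(\delta^2)$, and away from the origin and the $x_3$-axis the spherical mollification is unchanged --- is then routine.
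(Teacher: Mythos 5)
Your overall strategy matches the paper's: establish a Fredholm/a priori-bound equivalence, mollify the coefficients via Lemma~\ref{lem:mollifier} with $\delta$-uniform constants, approximate the fields by smooth ones, apply the smooth-coefficient bound of Theorem~\ref{thm:ImpedanceSmooth}, and pass to the limit. The recognition that the boundary terms require the field approximation to control tangential traces in $L^2_T(\deOimp)$ is also correct.

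However, the specific density result you invoke for the field mollification is the wrong one and does not apply. You propose to split off gradient parts and then appeal to density of $C^\infty(\overline\Omega)^3$ in the spaces $V(\Omega,\epsilon)$, $V(\Omega,\mu)$ via Theorem~\ref{thm:Density}. That theorem requires $\MA\in C^1(\overline D,\SPD)$, whereas here $\epsilon,\mu$ are only $L^\infty$; the argument in Appendix~\ref{app:density} leans on the Ne\v{c}as/Mitrea--Taylor regularity machinery, which is not available for $L^\infty$ coefficients, so the claim that $C^\infty(\overline\Omega)^3$ is dense in $V(\Omega,\epsilon)$ for rough $\epsilon$ is unsupported. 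One also cannot substitute $V(\Omega,\epsilon_\delta)$, since the genuine solution $\bE$ need not satisfy $\nabla\cdot[\epsilon_\delta\bE]\in L^2(\Omega)$ or $\epsilon_\delta\bE\cdot\hn\in L^2(\deOimp)$ when $\epsilon\neq\epsilon_\delta$. In fact none of this is needed: the perturbation argument only requires control of $\nabla\times(\bE-\bE_\eta)$, $\nabla\times(\bH-\bH_\eta)$, the weighted $L^2(\Omega)$ norms, and the $L^2_T(\deOimp)$ tangential traces --- all of which are dominated by the $H_\imp(\curl;\Omega)$ norm. The paper therefore uses density of $C^\infty(\overline\Omega)^3$ in $H_\imp(\curl;\Omega)$ (e.g.~\cite[Theorem~3.54]{MON03}) directly; this is simpler and coefficient-independent. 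Theorem~\ref{thm:Density} enters the argument earlier, inside Part~(i) of Lemma~\ref{lem:IntegratedM} when Lemma~\ref{lem:ImpedanceFirst} integrates the Morawetz identity over $\Omega$ for $C^1$ coefficients --- not in the rough-coefficient approximation step. With that one substitution, the rest of your proposal (uniform constants from \eqref{eq:limits2} and \eqref{eq:GrowthCoeff2}, two-parameter $\eta$-then-$\delta$ argument mirroring \eqref{eq:Friday0}--\eqref{eq:Friday5}, dominated convergence for the weights, and the boundary-term monotonicity from $\abs{\epsilon_\delta}\ge\epsilonmin$, $\abs{\mu_\delta}\ge\mumin$) is sound and agrees with the paper.
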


\subsection{Proof of Theorem \ref{thm:ImpedanceSmooth}}

\begin{lemma}\mythmname{Bound for $C^1$ coefficients and right-hand sides in $H(\div;\Omega)$}
\label{lem:ImpedanceFirst}
Suppose that $\Omega$ is a bounded Lipschitz open set that is
star-shaped with respect to a ball centred at the origin with radius $\rho \Rimp $, where $\Rimp :=\sup\{|\bx|,\bx\in {\domainimp}\}$.
Suppose that $\epsilon,\mu \in C^1(\domainimp;\SPD)$ satisfy \eqref{eq:limits} and $\vartheta\in L^\infty(\deOimp)$ is uniformly positive.
Let $M_\vartheta$ be defined by \eqref{eq:Mvartheta}.
Then, given $\bJ,\bK\in H(\div;\Omega)$,  the solution 
$\bE,\bH\in H_{\imp}(\curl;\Omega)$ of
\eqref{eq:ImpedanceFirst}--\eqref{eq:ImpedanceBC} 
exists, is unique, and satisfies the bound
\begin{align}\nonumber
&\epsilonconstant\N{\bE}^2_{L^2(\domainimp;\epsilon)} + \muconstant\N{\bH}^2_{L^2(\domainimp;\mu)}
+2\Rimp \big\|\abs{\epsilon}^{1/2}\bE_T\|_{L^2(\deOimp)}^2
+2\Rimp \big\|\abs{\mu}^{1/2}\bH_T\|_{L^2(\deOimp)}^2\\
&\quad\nonumber
\le 
\frac{4 \Rimp^2}{\epsilonconstant} \bigg(
\N{\bK}_{L^2(\domainimp;\epsilon)} 
+M_{\vartheta}\N{\bJ}_{L^2(\domainimp;\epsilon^{-1})}
+\frac {1}{\wn \epsilonmin^{1/2}} \N{\nabla\cdot\bJ}_{L^2(\domainimp)}
\bigg)^2\\
&
\quad+
\frac{4\Rimp^2}{\muconstant} \bigg(
\N{\bJ}_{L^2(\domainimp;\mu)} 
+M_{\vartheta}\N{\bK}_{L^2(\domainimp;\mu^{-1})}
+\frac {1}{\wn \mumin^{1/2}} \N{\nabla\cdot\bK}_{L^2(\domainimp)}
\bigg)^2
+2 \Rimp  M_\vartheta \big\|\vartheta^{-1/2}\bg\big\|_{L^2(\deOimp)}^2.
\label{eq:thm:Impedance}
\end{align}
\end{lemma}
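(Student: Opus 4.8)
\noindent\emph{Proof strategy.} The plan is to follow the proof of Lemma~\ref{lem:BoundSmooth_old}, integrating the Morawetz-type identity \eqref{eq:intmor} over $\domainimp$ (rather than over a ball) and using the impedance boundary condition \eqref{eq:ImpedanceBC} -- in place of the Silver--M\"uller radiation condition and Lemma~\ref{lem:infinity} -- to control the contribution on $\partial\domainimp$. As in the whole-space case it suffices to prove the a~priori bound \eqref{eq:thm:Impedance} for an arbitrary solution $\bE,\bH\in H\imp(\curl;\domainimp)$: the bound with $\bJ=\bK=\bzero$, $\bg=\bzero$ forces $\bE=\bH=\bzero$ since $\epsilonconstant,\muconstant>0$, so uniqueness holds, and since the interior impedance problem fits a Fredholm framework (see, e.g., \cite{MON03}) uniqueness yields existence for all data, with the unique solution then satisfying \eqref{eq:thm:Impedance}. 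Throughout I take the free function in \eqref{eq:intmor} to be the constant $\beta:=\Rimp M_\vartheta$, so that $\nabla\beta=\bzero$.

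First I would verify the regularity needed for part~(i) of Lemma~\ref{lem:IntegratedM} with $D=\domainimp$, namely $\bE\in V(\domainimp,\epsilon)$ and $\bH\in V(\domainimp,\mu)$. Equations~\eqref{eq:ImpedanceFirst} together with $\bJ,\bK\in H(\div;\domainimp)$ give $\epsilon\bE,\mu\bH\in H(\div;\domainimp)$ with $\nabla\cdot[\epsilon\bE]=(\ri\wn)^{-1}\nabla\cdot\bJ$ and $\nabla\cdot[\mu\bH]=-(\ri\wn)^{-1}\nabla\cdot\bK$ in $L^2(\domainimp)$, while $\bE_T,\bH_T\in L^2_T(\partial\domainimp)$ since $\bE,\bH\in H\imp(\curl;\domainimp)$; that these facts, together with $\epsilon,\mu\in C^1(\overline{\domainimp},\SPD)$, imply $\epsilon\bE\cdot\hn,\mu\bH\cdot\hn\in L^2(\partial\domainimp)$ -- and hence $\bE\in V(\domainimp,\epsilon)$, $\bH\in V(\domainimp,\mu)$ -- is the content of the generalisation of \cite[Theorem~1]{CoD98} in Lemma~\ref{thm:Density}. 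I then apply part~(i) of Lemma~\ref{lem:IntegratedM}, substitute $\nabla\times\bE-\ri\wn\mu\bH=\bK$, $\nabla\times\bH+\ri\wn\epsilon\bE=\bJ$, $\nabla\cdot[\epsilon\bE]=(\ri\wn)^{-1}\nabla\cdot\bJ$, $\nabla\cdot[\mu\bH]=-(\ri\wn)^{-1}\nabla\cdot\bK$, collect the volume terms on the left, and use \eqref{eq:GrowthCoeff} exactly as in Lemma~\ref{lem:BoundSmooth_old} (so that $\int_{\domainimp}\big(\epsilon+(\bx\cdot\nabla)\epsilon\big)\bE\cdot\conj\bE\ge\epsilonconstant\|\bE\|_{L^2(\domainimp;\epsilon)}^2$, and similarly for $\mu$). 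This gives an inequality of the shape
\begin{equation*}
\epsilonconstant\|\bE\|_{L^2(\domainimp;\epsilon)}^2+\muconstant\|\bH\|_{L^2(\domainimp;\mu)}^2+\calB
\le 2\Re\!\int_{\domainimp}\!\epsilon\,\bw_1\cdot\conj\bE+2\Re\!\int_{\domainimp}\!\mu\,\bw_2\cdot\conj\bH,
\end{equation*}
with $\bw_1=\bx\times\bK-\beta\epsilon^{-1}\bJ+(\ri\wn)^{-1}\epsilon^{-1}\bx\,(\nabla\cdot\bJ)$, $\bw_2=\bx\times\bJ+\beta\mu^{-1}\bK-(\ri\wn)^{-1}\mu^{-1}\bx\,(\nabla\cdot\bK)$, and $\calB$ the boundary integral on the right-hand side of \eqref{eq:intmor}.

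The heart of the proof -- and the step I expect to be the main obstacle -- is bounding $\calB$ below by the tangential-trace terms on the left of \eqref{eq:thm:Impedance}, minus the boundary-data term; the difficulty is that $\bE_N$ and $\bH_N$ are not controlled by $H\imp(\curl;\domainimp)$, so every normal contribution must be eliminated. Splitting the $\bE$- and $\bH$-parts of $\bQ_\beta\cdot\hn$ with Lemma~\ref{lem:NormalTangentMat} and completing the square in the normal components turns those contributions into $(\bx\cdot\hn)$-weighted squares, which are non-negative and may be dropped; what remains involves $\epsilon\bE_T\cdot\conj\bE_T$, $\mu\bH_T\cdot\conj\bH_T$, the cross term $2\Re\{\beta\bE_T\cdot\conj\bH_T\times\hn\}$, and terms containing $\bx_T$. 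Star-shapedness of $\domainimp$ with respect to the ball of radius $\rho\Rimp$ gives $\rho\Rimp\le\bx\cdot\hn\le\Rimp$ and $|\bx_T|\le\Rimp$ on $\partial\domainimp$, controlling the $\bx$-dependent coefficients in terms of $\Rimp$, $\rho$ and the point values $\abs{\epsilon}$, $\abs{\mu}$ -- this is where the factor $(3+\rho^{-1})$ in $M_\vartheta$ \eqref{eq:Mvartheta} originates. The impedance condition \eqref{eq:ImpedanceBC}, i.e.\ $\bH_T\times\hn-\vartheta\bE_T=\bg$, enters the cross term through the pointwise identity $2\vartheta\Re\{\bE_T\cdot\conj{(\bH_T\times\hn)}\}=|\bH_T|^2+\vartheta^2|\bE_T|^2-|\bg|^2$ (obtained by expanding $|\bH_T\times\hn-\vartheta\bE_T|^2=|\bg|^2$ and using $|\bH_T\times\hn|=|\bH_T|$), so that $2\Re\{\beta\bE_T\cdot\conj\bH_T\times\hn\}=\beta\vartheta^{-1}|\bH_T|^2+\beta\vartheta|\bE_T|^2-\beta\vartheta^{-1}|\bg|^2$; the choice $\beta=\Rimp M_\vartheta$ makes $\beta\vartheta|\bE_T|^2$ and $\beta\vartheta^{-1}|\bH_T|^2$ dominate the negative tangential terms while still leaving $\Rimp$-weighted tangential norms, and the only surviving data contribution, $-\beta\vartheta^{-1}|\bg|^2$, produces the term $\Rimp M_\vartheta\|\vartheta^{-1/2}\bg\|_{L^2(\partial\domainimp)}^2$ on the right.

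Moving the tangential-trace terms to the left and the $\bg$-term to the right yields an inequality of the form
\begin{equation*}
\epsilonconstant\|\bE\|_{L^2(\domainimp;\epsilon)}^2+\muconstant\|\bH\|_{L^2(\domainimp;\mu)}^2+2\Rimp\big\|\abs{\epsilon}^{1/2}\bE_T\big\|_{L^2(\partial\domainimp)}^2+2\Rimp\big\|\abs{\mu}^{1/2}\bH_T\big\|_{L^2(\partial\domainimp)}^2\le 2\Re\!\int_{\domainimp}\!\epsilon\,\bw_1\cdot\conj\bE+2\Re\!\int_{\domainimp}\!\mu\,\bw_2\cdot\conj\bH+\Rimp M_\vartheta\big\|\vartheta^{-1/2}\bg\big\|_{L^2(\partial\domainimp)}^2,
\end{equation*}
and applying Lemma~\ref{lem:E1} with $\bv_1=\bE$, $\bv_2=\bH$, $\MM_1=\epsilon$, $\MM_2=\mu$, $m_1=\epsilonconstant$, $m_2=\muconstant$ and $\Xi=\Rimp M_\vartheta\|\vartheta^{-1/2}\bg\|_{L^2(\partial\domainimp)}^2$ (its proof estimates only the right-hand side, so the extra non-negative boundary norms may simply be retained on the left), together with the bound $\|\bw_1\|_{L^2(\domainimp;\epsilon)}\le\Rimp\big(\|\bK\|_{L^2(\domainimp;\epsilon)}+M_\vartheta\|\bJ\|_{L^2(\domainimp;\epsilon^{-1})}+\wn^{-1}\epsilonmin^{-1/2}\|\nabla\cdot\bJ\|_{L^2(\domainimp)}\big)$ and its analogue for $\bw_2$ (both obtained as in Lemma~\ref{lem:BoundSmooth_old}), gives \eqref{eq:thm:Impedance}.
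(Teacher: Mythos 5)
Your proposal is, in all its essentials, the same argument as the paper: integrate the Morawetz identity \eqref{eq:intmor} over $\domainimp$ via Part~(i) of Lemma~\ref{lem:IntegratedM}; choose the constant multiplier $\beta=\Rimp M_\vartheta$; bound the boundary integral using the tangential/normal split (Lemma~\ref{lem:NormalTangentMat}), a Young-type completion of squares in the normal components, the star-shapedness inequalities $\rho\Rimp\le\bx\cdot\hn$ and $|\bx_T|\le\Rimp$, and the identity $2\Re\{\conj\bH_T\times\hn\cdot\bE_T\}=\vartheta|\bE_T|^2+\vartheta^{-1}|\bH_T|^2-\vartheta^{-1}|\bg|^2$ coming from \eqref{eq:ImpedanceBC}; then close with Lemma~\ref{lem:E1} exactly as in Lemma~\ref{lem:BoundSmooth_old}. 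The only presentational difference is that the paper packages the boundary estimate into the two auxiliary Lemmas~\ref{lem:PreImpedanceIneq}--\ref{lem:ImpedanceIneq}, whereas you sketch that computation inline; your appeal to the Fredholm framework of \cite{MON03} for existence given the a~priori bound also matches the paper's treatment.

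There is one small but genuine factor-of-two slip in your intermediate display. With $\beta=\Rimp M_\vartheta$ (i.e.\ with the $(3+\rho^{-1})$ in $M_\vartheta$), the boundary estimate supports
\begin{equation*}
I_{\deO}\le \Rimp M_\vartheta\big\|\vartheta^{-1/2}\bg\big\|^2_{L^2(\deOimp)}
-\Rimp\big\|\abs{\epsilon}^{1/2}\bE_T\big\|^2_{L^2(\deOimp)}
-\Rimp\big\|\abs{\mu}^{1/2}\bH_T\big\|^2_{L^2(\deOimp)},
\end{equation*}
consistent with your own remark that the choice of $\beta$ ``leaves $\Rimp$-weighted tangential norms''. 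So after moving these to the left the intermediate inequality should carry the coefficient $\Rimp$, not $2\Rimp$, on the tangential traces. The $2\Rimp$ appearing in \eqref{eq:thm:Impedance} is then produced by Lemma~\ref{lem:E1} itself: absorbing the Young half-squares to the left and multiplying by two doubles everything remaining on the left (including the boundary terms) as well as $\Xi$. If instead one started from $2\Rimp$ in the intermediate inequality and then applied Lemma~\ref{lem:E1}, one would land on $4\Rimp$, which the chosen $\beta$ does not support -- the condition \eqref{eq:BetaConditionMat} would have to be strengthened to $(4+\rho^{-1})$ for that. Correcting the intermediate coefficient to $\Rimp$ makes your argument close exactly as the paper's.
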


Theorem \ref{thm:ImpedanceSmooth} follows by combining Lemma \ref{lem:ImpedanceFirst} and 
the following analogue of Lemma \ref{lem:HdivtoL2}.

\begin{lemma}\mythmname{Bound with right-hand side in $H(\div;\Omega)$ implies bound with right-hand side in $L^2(\Omega)$}
\label{lem:HdivtoL2imp}
Suppose that $\epsilon,\mu$ are such that the solution of  \eqref{eq:ImpedanceFirst}--\eqref{eq:ImpedanceBC} with $\bJ,\bK \in H(\div;\Omega)$ and $\bg \in L^2_T(\partial \Omega)$ exists, is unique, and satisfies the bound \eqref{eq:thm:Impedance}.
Then the solution to \eqref{eq:ImpedanceFirst}--\eqref{eq:ImpedanceBC} with $\BJ, \bK\in \BL^2(\Omega)$ 
and $\bg \in L^2_T(\partial \Omega)$
exists, is unique, and satisfies \eqref{eq:ImpedanceBound}.
\end{lemma}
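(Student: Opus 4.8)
The plan is to imitate the proof of Lemma~\ref{lem:HdivtoL2}, with $B_R$ replaced by $\Omega$ and $R$ by $\Rimp$, taking care of the impedance boundary condition \eqref{eq:ImpedanceBC} and of the extra boundary terms in \eqref{eq:thm:Impedance}. Given $\bJ,\bK\in L^2(\Omega)$ and $\bg\in L^2_T(\partial\Omega)$, I would first introduce $p,q\in H^1_0(\Omega)$ as the unique solutions (existence and uniqueness by Lax--Milgram, using $\epsilon\succeq\epsilonmin>0$, $\mu\succeq\mumin>0$, and the Poincar\'e inequality) of
\begin{equation*}
\ri\wn(\epsilon\nabla p,\nabla v)_{L^2(\Omega)}=(\bJ,\nabla v)_{L^2(\Omega)},\qquad
\ri\wn(\mu\nabla q,\nabla v)_{L^2(\Omega)}=(\bK,\nabla v)_{L^2(\Omega)}\qquad\tfa v\in H^1_0(\Omega).
\end{equation*}
Exactly as in the proof of Lemma~\ref{lem:HdivtoL2}, $\ri\wn\nabla p$ is the orthogonal projection of $\epsilon^{-1}\bJ$ in the $(\epsilon\,\cdot,\cdot)_{L^2(\Omega)}$ inner product, so the Pythagoras identities \eqref{eq:orthog1}--\eqref{eq:orthog2} hold with $B_R$ replaced by $\Omega$, and testing against $v\in C_c^\infty(\Omega)$ gives $\nabla\cdot[\bJ-\ri\wn\epsilon\nabla p]=0=\nabla\cdot[\bK-\ri\wn\mu\nabla q]$ in $\Omega$; hence $\bJ':=\bJ-\ri\wn\epsilon\nabla p$ and $\bK':=\bK-\ri\wn\mu\nabla q$ lie in $H(\div;\Omega)$.

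The point requiring care --- really the only ingredient not already present in Lemma~\ref{lem:HdivtoL2} --- is that subtracting the gradient of an $H^1_0$ function leaves \eqref{eq:ImpedanceBC} unchanged. Indeed, $(\nabla p)_T$ is the surface gradient of the trace of $p$, which vanishes because $p\in H^1_0(\Omega)$; thus $(\nabla p)_T=\bzero$, and similarly $(\nabla q)_T=\bzero$, so $(\nabla q)\times\hn=(\nabla q)_T\times\hn=\bzero$ (recall that $\bv\times\hn$ depends only on $\bv_T$). Therefore, letting $(\bE',\bH')\in H\imp(\curl;\Omega)$ denote the solution of \eqref{eq:ImpedanceFirst}--\eqref{eq:ImpedanceBC} with data $\bJ',\bK'\in H(\div;\Omega)$ and the same $\bg$ --- which exists, is unique, and satisfies \eqref{eq:thm:Impedance} by hypothesis --- one checks that $\bE:=\bE'+\nabla p$ and $\bH:=\bH'+\nabla q$ satisfy $\bE,\bH\in H\imp(\curl;\Omega)$ (because $\bE_T=\bE'_T$ and $\bH_T=\bH'_T$ lie in $L^2_T(\partial\Omega)$), solve \eqref{eq:ImpedanceFirst} with data $\bJ,\bK$, and satisfy \eqref{eq:ImpedanceBC} with data $\bg$. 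This proves existence for $L^2$ data; uniqueness follows from the assumed uniqueness for $H(\div;\Omega)$ data applied to the difference of two solutions, whose data $\bzero$ lies in $H(\div;\Omega)$.

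It then remains to deduce \eqref{eq:ImpedanceBound} from the bound \eqref{eq:thm:Impedance} applied to $(\bE',\bH')=(\bE-\nabla p,\bH-\nabla q)$, which, using $\nabla\cdot\bJ'=\nabla\cdot\bK'=0$, $(\bE-\nabla p)_T=\bE_T$, and $(\bH-\nabla q)_T=\bH_T$, reads
\begin{align*}
&\epsilonconstant\N{\bE-\nabla p}^2_{L^2(\Omega;\epsilon)}+\muconstant\N{\bH-\nabla q}^2_{L^2(\Omega;\mu)}+2\Rimp\N{\abs{\epsilon}^{1/2}\bE_T}^2_{L^2(\partial\Omega)}+2\Rimp\N{\abs{\mu}^{1/2}\bH_T}^2_{L^2(\partial\Omega)}\\
&\quad\le\frac{4\Rimp^2}{\epsilonconstant}\Big(\N{\bK'}_{L^2(\Omega;\epsilon)}+M_\vartheta\N{\bJ'}_{L^2(\Omega;\epsilon^{-1})}\Big)^2+\frac{4\Rimp^2}{\muconstant}\Big(\N{\bJ'}_{L^2(\Omega;\mu)}+M_\vartheta\N{\bK'}_{L^2(\Omega;\mu^{-1})}\Big)^2+2\Rimp M_\vartheta\N{\vartheta^{-1/2}\bg}^2_{L^2(\partial\Omega)}.
\end{align*}
From here the argument is the exact transcription of the passage from \eqref{eq:analogue1} to \eqref{eq:thm:BoundSmooth}: bound $\epsilonconstant\N{\bE}^2_{L^2(\Omega;\epsilon)}\le2\epsilonconstant\N{\bE-\nabla p}^2_{L^2(\Omega;\epsilon)}+2\epsilonconstant\N{\nabla p}^2_{L^2(\Omega;\epsilon)}$ and its analogue for $\bH$ (this, together with the trivial doubling of the boundary terms, turns the $4\Rimp^2$ and $2\Rimp M_\vartheta$ on the right of \eqref{eq:thm:Impedance} into the $8\Rimp^2$ and $4\Rimp M_\vartheta$ of \eqref{eq:ImpedanceBound}); then use $(a+b)^2\le2(a^2+b^2)$ together with $\N{\bv}^2_{L^2(\Omega;\epsilon)}\le\epsilonmax\mumax\N{\bv}^2_{L^2(\Omega;\mu^{-1})}$ and its counterpart with $\epsilon,\mu$ interchanged; regroup so that the terms carrying $\bJ'$ appear with weight $\epsilon^{-1}$ and those carrying $\bK'$ with weight $\mu^{-1}$; add $\wn^2\N{\nabla p}^2_{L^2(\Omega;\epsilon)}$ and $\wn^2\N{\nabla q}^2_{L^2(\Omega;\mu)}$ (absorbed into the two maxima, producing the outer factor $2$); and finally eliminate the gradient terms via the Pythagoras identities. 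When $\bK=\bzero$ one has $q=0$ and $\bK'=\bzero$, so the step $(a+b)^2\le2(a^2+b^2)$ is unnecessary for the $\bJ$-term and the corresponding $8$ reduces to $4$, exactly as in Lemma~\ref{lem:HdivtoL2}. I do not anticipate any genuine obstacle: this is a close copy of that proof, the only real checkpoint being the invariance of \eqref{eq:ImpedanceBC} under $\bE\mapsto\bE-\nabla p$, $\bH\mapsto\bH-\nabla q$ established above.
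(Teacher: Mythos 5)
Your proposal is correct and takes essentially the same approach as the paper, whose own proof just refers the reader to Lemma~\ref{lem:HdivtoL2} and records the one new ingredient — that $(\nabla p)_T=(\nabla q)_T=\bzero$, which preserves the impedance condition \eqref{eq:ImpedanceBC} and leaves the boundary norms on the left-hand side of \eqref{eq:thm:Impedance} unchanged; you fill in the existence, uniqueness, and algebraic steps that the paper leaves implicit. One minor (and inherited) sign caveat: with $\bK':=\bK-\ri\wn\mu\nabla q$, the divergence-free choice, the reconstruction should be $\bH:=\bH'-\nabla q$ rather than $\bH'+\nabla q$ so that $-\ri\wn\mu\bH+\nabla\times\bE = \bK'+\ri\wn\mu\nabla q=\bK$; this mirrors a sign in the paper's own proof of Lemma~\ref{lem:HdivtoL2} and does not affect any of the estimates, since the boundary condition and the triangle inequality $\N{\bH}^2\le2\N{\bH'}^2+2\N{\nabla q}^2$ are unchanged.
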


\bpf
This is almost identical to the proof of Lemma \ref{lem:HdivtoL2}.
The main new ingredient is the fact that $(\nabla p)_T=(\nabla q)_T=\bzero$, which holds since $(\nabla p)_T$
and $(\nabla q)_T$
 are the surface gradients of the traces of $p$ and $q$, respectively, and these traces are zero, since $p,q \in H^1_0(\Omega)$. This fact shows that 
$\bE-\nabla p$ and $\bH-\nabla q$ satisfy the impedance boundary condition \eqref{eq:ImpedanceBC}
(since $\bH\times \hn =\bH_T\times \hn$), and also means that the correct norms on $\partial \Omega$ appear in the analogue of \eqref{eq:analogue1}.
\epf

The rest of this subsection is therefore dedicated to the proof of Lemma \ref{lem:ImpedanceFirst}.

\begin{lemma}\label{lem:star}
(i) If the domain $D\subset\IR^3$ is Lipschitz, then it is star-shaped with respect to $\bx_0$ if and only if $(\bx-\bx_0)\cdot\hn(\bx)\geq 0$ for all $\bx \in\partial D$ for which the outward-pointing unit normal $\hn(\bx)$ is defined.

\noindent (ii) 
$D$ is star-shaped with respect to the ball $B_{a}$ if and only if it is Lipschitz and
$\bx \cdot \hn(\bx) \geq {a}$ for all  $\bx \in \partial D$ for which $\hn(\bx)$ is defined; 
\end{lemma}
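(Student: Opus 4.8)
The plan is to derive Part~(ii) from Part~(i), so the real work is Part~(i). The one structural input I would use at the outset is standard: since $D$ is a bounded Lipschitz open set, near each boundary point it coincides (after a rotation) with the epigraph of a Lipschitz function, so by Rademacher's theorem $\hn(\bx)$ is defined for a.e.\ $\bx\in\partial D$, and at each such point the \emph{cone property} holds --- if $\bv\in\IR^3$ satisfies $\bv\cdot\hn(\bx)<0$ then $\bx+s\bv\in D$ for all sufficiently small $s>0$, while if $\bv\cdot\hn(\bx)>0$ then $\bx+s\bv\notin\overline D$ for small $s>0$. I would either cite this or give the two-line verification from the local graph. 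After a translation I reduce to $\bx_0=\bzero$ in Part~(i).

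For the ``only if'' direction of Part~(i), I argue by contradiction: let $\bx\in\partial D$ be a point where $\hn(\bx)$ is defined and suppose $\bx\cdot\hn(\bx)<0$. Applying the cone property with $\bv=\bx$ gives $(1+s)\bx\in D$ for all small $s>0$; fixing such an $s$ and writing $\by:=(1+s)\bx\in D$, star-shapedness forces $[\bzero,\by]\subset D$. But $\bx=(1+s)^{-1}\by$ lies strictly between $\bzero$ and $\by$, so $\bx\in D$, contradicting $\bx\in\partial D$. Hence $\bx\cdot\hn(\bx)\ge0$ wherever $\hn$ is defined.

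For the ``if'' direction of Part~(i), I again argue by contradiction. First I note $\bzero\in\overline D$ (otherwise the point of $\partial D$ nearest $\bzero$ would have its inward normal pointing away from $\bzero$, violating the hypothesis, after an arbitrarily small tilt of the segment to reach a differentiability point). Suppose $tD\not\subseteq D$, i.e.\ there is $\bx\in D$ with some $t\bx\notin D$; set $t^\ast:=\sup\{t\in[0,1):t\bx\notin D\}\in(0,1)$, so that $\bx^\ast:=t^\ast\bx\in\partial D$ and $s\bx\in D$ for every $s\in(t^\ast,1]$. When $\hn(\bx^\ast)$ is defined, the cone property together with ``$s\bx\in D$ for $s$ just past $t^\ast$'' forces $\bx\cdot\hn(\bx^\ast)\le0$, hence $\bx^\ast\cdot\hn(\bx^\ast)=t^\ast\,\bx\cdot\hn(\bx^\ast)\le0$, and combined with the hypothesis $\bx^\ast\cdot\hn(\bx^\ast)=0$: the ray is tangent to $\partial D$ at $\bx^\ast$. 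This tangential case (together with the case where $\hn(\bx^\ast)$ is undefined) is the point I expect to be the main obstacle, because the a.e.\ pointwise normal condition does not by itself control a grazing ray. I would close it by approximating $D$ from inside by smooth radially-star-shaped domains --- mollifying the radial graph of $D$ exactly as in Lemma~\ref{lem:mollifier}, for which the statement is elementary --- and passing to the limit; equivalently, one may invoke the flow-invariance (Nagumo-type) principle for the contracting vector field $\bx\mapsto-\bx$, whose subtangency condition on $\partial D$ is precisely $\bx\cdot\hn(\bx)\ge0$ a.e. Under the standing convention $\bzero\in D$, this yields $[\bzero,\bx]\subset D$ for all $\bx\in D$.

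For Part~(ii): if $D$ is star-shaped with respect to $B_a$ --- i.e.\ with respect to every $\by\in B_a$ --- then the pointwise conclusion of the ``only if'' half of Part~(i), applied with centre $\by$ for each $\by\in B_a$, gives $\bx\cdot\hn(\bx)\ge\by\cdot\hn(\bx)$ at every differentiability point $\bx\in\partial D$ and every $\by\in B_a$; since $|\hn(\bx)|=1$ we have $\sup_{\by\in B_a}\by\cdot\hn(\bx)=a$, so $\bx\cdot\hn(\bx)\ge a$. (That star-shapedness with respect to a ball already forces Lipschitz regularity is classical, so the Lipschitz clause on the right-hand side is not an additional hypothesis.) Conversely, if $D$ is Lipschitz with $\bx\cdot\hn(\bx)\ge a$ a.e.\ on $\partial D$, then for each $\by\in B_a$ we get $(\bx-\by)\cdot\hn(\bx)\ge a-|\by|>0$ a.e., so Part~(i) yields star-shapedness with respect to $\by$; as $\by\in B_a$ was arbitrary, $D$ is star-shaped with respect to $B_a$. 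I would also remark that the application in Lemma~\ref{lem:ImpedanceFirst} only invokes the elementary ``only if'' direction, so the delicate tangential case is needed only for the converse stated here.
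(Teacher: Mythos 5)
The paper does not prove this lemma in-text; it cites Lemma~5.4.1 of \cite{AndreaPhD} and Lemma~3.1 of \cite{MaxwellPDE}, so you are necessarily taking a different route by supplying an argument. Your ``only if'' direction of Part~(i) (cone property at a differentiability point plus star-shapedness $\Rightarrow$ contradiction) is correct, and your reduction of Part~(ii) to Part~(i) is exactly right, including the observation that $\sup_{\by\in B_a}\by\cdot\hn(\bx)=a$. You are also right that the paper only ever uses the elementary direction of Part~(ii).

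The genuine gap is in the ``if'' direction of Part~(i), which you correctly flag as the crux but do not actually close. The proposed fix by ``mollifying the radial graph of $D$'' is circular: to write $\partial D$ as a radial graph over $S^2$ one must already know that each ray from $\bzero$ meets $\partial D$ exactly once, i.e.\ that $D$ is star-shaped --- the very thing to be proved. The Nagumo route is asserted rather than verified: the classical viability/invariance theorems require a subtangency condition at \emph{every} point of $\partial D$ (typically via the Bouligand or proximal normal cone), whereas your hypothesis gives $\bx\cdot\hn\ge0$ only $\calH^{d-1}$-a.e.; upgrading the a.e.\ inequality to a pointwise tangency statement on a Lipschitz boundary is itself a nontrivial step (it can be done, e.g.\ by a perturbation/coarea argument over directions using that the radial projection of a null subset of $\partial D$ is null in $S^2$, or by rewriting the condition as an a.e.\ differential inequality for the local Lipschitz graph functions and integrating it along rays, but this is precisely the work that is missing). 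Separately, note that as literally stated Part~(i) needs $\bx_0\in D$, not merely $\bx_0\in\overline D$: a half-ball $B_1\cap\{x_3>0\}$ with $\bx_0=\bzero$ satisfies $\bx\cdot\hn\ge0$ everywhere on its boundary but is not star-shaped with respect to $\bzero$, so your reasoning that places $\bzero$ only in $\overline D$ is not sufficient. In Part~(ii), where $\bx\cdot\hn\ge a>0$, this edge case disappears, which is consistent with the lemma being intended for use with a ball of positive radius.
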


\bpf 
See \cite[Lemma 5.4.1]{AndreaPhD} 
or \cite[Lemma 3.1]{MaxwellPDE}. 
\epf

In the following two lemmas, $|\epsilon|$ and $|\mu|$ denote the point values of the matrix norms (induced by the Euclidean vector norm).

\ble
\label{lem:PreImpedanceIneq}
Let $\domainimp$ be a bounded Lipschitz open set with outward-pointing unit normal vector $\hn$. 
Let $\Rimp :=\sup\{|\bx|,\bx\in {\domainimp}\}$ and assume that ${\domainimp}$ is star-shaped with respect to a ball of radius $\rho \Rimp $, i.e.\ $\bx\cdot\hn\ge\rho \Rimp >0$ a.e.\ on $\deO$.
Let $\bE \in V({\domainimp},\epsilon)$, $\bH \in V({\domainimp}, \mu)$ and $\beta\in C(\conj{\domainimp})$.
Then 
\begin{align}
\label{eq:preimpineq}
I_{\deO}&:=\int_{\deO}
(\bx\cdot\hn)
\Big(\epsilon\bE_T\cdot\conj\bE_T - \epsilon\bE_N\cdot\conj\bE_N 
+ \mu\bH_T\cdot\conj\bH_T - \mu\bH_N\cdot\conj\bH_N\Big)
\\
&\qquad\qquad-2\Re\Big\{(\bE_T\cdot\bx_T)(\epsilon\conj\bE\cdot\hn)+(\bH_T\cdot\bx_T)(\mu\conj\bH\cdot\hn)+\beta\bE\times\conj\bH\cdot\hn\Big\}
\nonumber
\\
&\!\le \Rimp (2+\rho^{-1})
\bigg(\N{\abs{\epsilon}^{1/2}\bE_T}_{L^2(\deOimp)}^2+\N{\abs{\mu}^{1/2}\bH_T}_{L^2(\deOimp)}^2\bigg) 
-\int_{\deO} 2\Re\Big\{\beta\conj\bH\times\hn\cdot\bE\Big\}.
\nonumber
\end{align}
\ele
\bpf
We follow similar steps to \cite[Lemma 5.4.3]{AndreaPhD}, but note that this earlier result did not use $\beta$ and considered constant scalar $\epsilon,\mu$. 
First note that, for all $\bv\in\IC^3$, $\alpha\in\SPD$, and $\hn\in\IR^3$ with $|\hn|=1$,
\begin{align*}
-2\Re\{(\bv_T\cdot\bx_T)(\alpha\conj\bv\cdot\hn)\}
&=-2\Re\Big\{(\bv_T\cdot\bx_T)(\alpha\conj\bv_T\cdot\hn)+
(\bv_T\cdot\bx_T)\frac{\alpha\conj\bv_N\cdot\bv_N}{\bv_N\cdot\hn}\Big\}\\
&\le2|\bx_T| \abs{\alpha}|\bv_T|^2 +2|\bx_T| |\bv_T|\sqrt{\abs{\alpha}} \sqrt{\alpha\conj\bv_N\cdot\bv_N}\\
&\le2|\bx_T| \abs{\alpha}|\bv_T|^2 + \frac{|\bx_T|^2}{\bx\cdot\hn}\abs{\alpha}|\bv_T|^2 + (\bx\cdot\hn)(\alpha\conj\bv_N\cdot\bv_N)\\
&=\Big(2|\bx_T|+\frac{|\bx_T|^2}{\bx\cdot\hn}\Big)\abs{\alpha}|\bv_T|^2+ (\bx\cdot\hn)(\alpha\conj\bv_N\cdot\bv_N)
\end{align*}
where we used the Young inequality with weight $(\bx\cdot\hn)$ and where $\abs{\alpha}=\sup_{\bzero\ne\bw\in\IC^3}|\alpha\bw|/|\bw|$.
This allows to control the integral on $\deO$ with the tangential traces of the fields only:
\begin{align*}
I_{\deO} &\le\int_{\deO}
(\bx\cdot\hn)\Big(\epsilon\bE_T\cdot\conj\bE_T - \epsilon\bE_N\cdot\conj\bE_N 
+ \mu\bH_T\cdot\conj\bH_T - \mu\bH_N\cdot\conj\bH_N\Big)\\
&\qquad + \Big(2|\bx_T|+\frac{|\bx_T|^2}{\bx\cdot\hn}\Big)\big(\abs{\epsilon}|\bE_T|^2+\abs{\mu}|\bH_T|^2\big) 
+(\bx\cdot\hn)\big(\epsilon\bE_N\cdot\conj\bE_N+\mu\bH_N\cdot\conj\bH_N\big)\\
&\qquad-2\Re\Big\{\beta\bE\times\conj\bH\cdot\hn\Big\}
\\
&\le\int_{\deO}
\Big(\bx\cdot\hn+2|\bx_T|+\frac{|\bx_T|^2}{\bx\cdot\hn}\Big)
\big(\abs{\epsilon}|\bE_T|^2+\abs{\mu}|\bH_T|^2\big) 
-2\Re\Big\{\beta\bE\times\conj\bH\cdot\hn\Big\}.
\end{align*}
The identity 
$(\bx\cdot\hn)+|\bx_T|^2/(\bx\cdot\hn)=(|\bx_N|^2+|\bx_T|^2)/\bx_N=|\bx|^2/\bx_N\le \Rimp /\rho$
then implies the assertion.
\epf

This next lemma plays the analogous role for the interior impedance problem as Lemma \ref{lem:infinity} does for the transmission problem.
The Helmholtz analogue of this result appeared implicitly in \cite[\S8.1]{MEL95}, \cite{CUF06}, and \cite{HET07}, and explicitly as \cite[Lemma A.11]{GrPeSp:18}. 

\ble\mythmname{Inequality on $\deOimp$ used to deal with the impedance boundary condition}
\label{lem:ImpedanceIneq}
Let $\domainimp$ be a bounded Lipschitz open set with outward-pointing unit normal vector $\hn$. 
Let $\Rimp :=\sup\{|\bx|,\bx\in {\domainimp}\}$ and assume that ${\domainimp}$ is star-shaped with respect to a ball of radius $\rho \Rimp $, i.e.\ $\bx\cdot\hn\ge\rho \Rimp >0$ a.e.\ on $\deO$. 
Let $\bE \in V({\domainimp},\epsilon)$, $\bH \in V({\domainimp}, \mu)$ satisfy the impedance boundary condition 
\beq\label{eq:imp1}
\bH\times \hn-\vartheta\bE_T=\bg
\eeq
for 
$\bg\in L^2_T(\deOimp)$ and for a 
uniformly positive $\vartheta\in L^\infty(\deO)$.
If $\beta\in C^1(\overline{\domainimp})$ satisfies
\beq
\label{eq:BetaConditionMat}
\beta\ge \Rimp (3+\rho^{-1})\max\Big\{\frac{|\epsilon|}\vartheta, \vartheta{|\mu|}\Big\}
\eeq
pointwise almost everywhere on $\deO$,
then, with $I_{\deO}$ as in \eqref{eq:preimpineq},
\begin{align}
\label{eq:impineq}
&I_{\deO}
\leq\N{\Big(\frac\beta\vartheta\Big)^{1/2}\bg}^2_{L^2(\deOimp)}
-\Rimp \N{\abs{\epsilon}^{1/2}\bE_T}_{L^2(\deOimp)}^2
-\Rimp \N{\abs{\mu}^{1/2}\bH_T}_{L^2(\deOimp)}^2.
\end{align}
\ele
Note that when $\bE$ and $\bH$ are in $C^1(\overline{\domainimp})$, the left-hand side $I_{\deO}$ of \eqref{eq:preimpineq} and \eqref{eq:impineq} is $- \int_{\deO} \bQ_\beta \cdot \hn$.
\bpf
The impedance boundary condition \eqref{eq:imp1} implies that
\begin{align*}
2\Re\big\{ \conj\bH\times\hn\cdot\bE\big\} 
&= \Re\big\{\conj \bH\times\hn \cdot \bE_T + \conj \bH\times\hn \cdot \bE_T\big\}\\
&= \Re\big\{\vartheta|\bE_T|^2+\conj\bg\cdot\bE_T 
+ \vartheta^{-1}|\bH_T|^2-\vartheta^{-1}\conj\bH\times\hn\cdot\bg\big\}\\
&=\vartheta|\bE_T|^2+ \vartheta^{-1}|\bH_T|^2
+\Re\big\{\conj\bg\cdot(\bE_T - \vartheta^{-1}\bH\times\hn)\big\}\\
&=\vartheta|\bE_T|^2+ \vartheta^{-1}|\bH_T|^2-\vartheta^{-1}|\bg|^2,
\end{align*}
so \eqref{eq:preimpineq} becomes
\begin{align*}
I_{\deO}\leq &\;
\Rimp (2+\rho^{-1})\bigg(\big\|\abs{\epsilon}^{1/2}\bE_T\big\|_{L^2(\deOimp)}^2+\big\|\abs{\mu}^{1/2}\bH_T\big\|_{L^2(\deOimp)}^2\bigg) \\
&\qquad-\big\|(\beta\vartheta)^{1/2}\bE_T\big\|^2_{{L^2(\deOimp)}}
-\big\|(\beta\vartheta^{-1})^{1/2}\bH_T\big\|^2_{{L^2(\deOimp)}}
+\big\|(\beta\vartheta^{-1})^{1/2}\bg\big\|^2_{L^2(\deOimp)}.
\end{align*}
Then condition \eqref{eq:BetaConditionMat} implies that  
\beqs
I_{\deO}-\big\|(\beta\vartheta^{-1})^{1/2}\bg\big\|^2_{L^2(\deOimp)}
\leq -\Rimp \big\|\abs{\epsilon}^{1/2}\bE_T\big\|_{L^2(\deOimp)}^2-\Rimp \big\|\abs{\mu}^{1/2}\bH_T\big\|_{L^2(\deOimp)}^2,
\eeqs
and the result \eqref{eq:impineq} follows.
\epf

\begin{proof}[Proof of Lemma~\ref{lem:ImpedanceFirst}]
Using Part (i) of Lemma \ref{lem:IntegratedM}, we apply the integrated Morawetz identity \eqref{eq:intmor} with $D=\domainimp$ and use the Maxwell equations \eqref{eq:ImpedanceFirst} (which imply $\nabla\cdot[\epsilon\bE]=\nabla\cdot[\mu\bH]=0$) to arrive at 
\begin{align*}
&\int_{\Omega}\big(\epsilon+(\bx\cdot\nabla)\epsilon\big) \bE\cdot\conj\bE+\big(\mu+(\bx\cdot\nabla)\mu\big)\bH\cdot\conj\bH
\\
&=2\int_{\Omega}\!\!\!\Re\Big\{\bK\cdot(\epsilon\conj\bE\times\bx+\beta\conj \bH)
+\bJ\cdot(\mu\conj\bH\times\bx-\beta\conj\bE)+\nabla\beta\cdot\bE\times\conj\bH
+(\conj\bE\cdot\bx)\frac{\nabla\cdot\bJ}{\ri\wn}
-(\conj\bH\cdot\bx)\frac{\nabla\cdot\bK}{\ri\wn}
\Big\}
\\
&\qquad-\int_{\partial \domainimp}
\left( \epsilon\bE_N\cdot\conj\bE_N - \epsilon\bE_T\cdot\conj\bE_T
+ \mu\bH_N\cdot\conj\bH_N - \mu\bH_T\cdot\conj\bH_T\right) (\bx\cdot\hn) 
\\
&\qquad-\int_{\partial \domainimp}
2\Re\bigg\{ (\bE_T\cdot\bx_T) (\epsilon\conj\bE\cdot\hn) +(\bH_T\cdot\bx_T) (\mu\conj\bH\cdot\hn)
+ \beta \bE_T\cdot \conj\bH_T\times \hn\bigg\};
\end{align*}
(compare to \eqref{eq:EuanTemp1new}).
If
$\beta=\Rimp  M_\vartheta$ (which satisfies the condition \eqref{eq:BetaConditionMat}) then 
Lemma~\ref{lem:ImpedanceIneq} implies that the integrals over $\deOimp$ 
 are bounded above by 
$$
\Xi:= \Rimp  M_\vartheta \big\|\vartheta^{-1/2}\bg\big\|_{L^2(\deOimp)}^2
-\Rimp \big\|\abs{\epsilon}^{1/2}\bE_T\big\|_{L^2(\deOimp)}^2
-\Rimp \big\|\abs{\mu}^{1/2}\bH_T\big\|_{L^2(\deOimp)}^2.
$$
Proceeding as in the proof of Lemma \ref{lem:BoundSmooth_old} and using Lemma \ref{lem:E1} with this value of $\Xi$, one obtains the assertion \eqref{eq:thm:Impedance}.
\end{proof}

\bre\mythmname{Generalised impedance boundary conditions}
The bound \eqref{eq:impineq} holds if the impedance condition \eqref{eq:imp1} is replaced by a more general condition
$\bH\times\hn-\widehat\calT\bE_T=\bg$,
where $\widehat\calT:L^2_T({\deOimp})\to L^2_T({\deOimp})$ is a continuous, bijective, linear operator satisfying 
$\int_{\deOimp}\Re\{\widehat\calT\bv\cdot\conj\bv\}\ge c|\bv|^2_{\deOimp}$,
$\int_{\deOimp}\Re\{\widehat\calT^{-1}\bv\cdot\conj\bv\}\ge c|\bv|^2_{\deOimp}$
and
$\int_{\deOimp}\Re\{\bv\cdot\widehat\calT^{-1}\bw\}=\int_{\deOimp}\Re\{\widehat\calT^{-1}\bv\cdot\bw\}$
for all $\bv,\bw\in L^2_T({\deOimp})$ and some $c>0$.
The bound \eqref{eq:impineq} then holds for sufficiently large $\beta$, proportional to $c^{-1}$.
Under these assumptions, the bound \eqref{eq:thm:Impedance} on the solution of the (generalised) impedance problem also holds.
\ere

\subsection{Proof of Theorem \ref{thm:ImpedanceRough}}

By \cite[\S4.5]{MON03} it is sufficient to show that the bound \eqref{eq:thm:BoundSmooth} holds under the assumption that the solution of \eqref{eq:ImpedanceFirst} exists.
Let
\beqs
\Tnorm{(\bE,\bH)}^2_{\epsilon,\mu,\mathrm{imp}}:= \epsilonconstant\N{\bE}^2_{L^2(\domainimp;\epsilon)} + \muconstant\N{\bH}^2_{L^2(\domainimp;\mu)}
+2\Rimp \epsilonmin\big\|\bE_T\|_{L^2(\deOimp)}^2
+2\Rimp \mumin\big\|\bH_T\|_{L^2(\deOimp)}^2.
\eeqs
By the density of $C^\infty(\overline{\domainimp})^3$ in $H\imp(\curl;\Omega)$ (see, e.g., \cite[Theorem 3.54]{MON03}), 
given $\eta>0$, $\epsilon, \mu, \bE,$ and $\bH$, 
there exists $\bE_\eta, \bH_\eta\in C^\infty(\overline{\Omega})^3$ such that 
\begin{align*}
&\N{\nabla\times (\bE- \bE_\eta)}_{L^2(\Omega)}^2+ 
\N{\nabla\times (\bH - \bH_\eta)}_{L^2(\Omega)}^2
+\Big|
\Tnorm{(\bE,\bH)}_{\epsilon,\mu,\mathrm{imp}}^2 -\Tnorm{(\bE_\eta,\bH_\eta)}_{\epsilon,\mu,\mathrm{imp}}^2
\Big|
\leq \eta/2.
\end{align*}
Exactly as in the proof of Theorem \ref{thm:BoundRough}, $\bE_\eta$ and $\bH_\eta$ satisfy the PDEs \eqref{eq:PDEperturb1} and \eqref{eq:PDEperturb2}, and furthermore
\beqs
(\bH_\eta)\times \bn - \vartheta (\bE_\eta)_T = 
(\bH_\eta)_T\times \bn - \vartheta (\bE_\eta)_T = 
\bg-(\bH-\bH_\eta)_T\times \bn + \vartheta (\bE-\bE_\eta)_T.
\eeqs
By Lemma \ref{lem:mollifier}, $\epsilon_\delta$ and $\mu_\delta$ satisfy the conditions of Theorem \ref{thm:ImpedanceSmooth}, and thus, by the bound \eqref{eq:ImpedanceBound},
\begin{align}\nonumber
\Tnorm{(\bE_\eta,\bH_\eta)}_{\epsilon_\delta,\mu_\delta,\mathrm{imp}}^2 
\leq C_1 &\N{ \bJ + \ri \wn (\epsilon_\delta-\epsilon)\bE_\eta + \ri\omega\epsilon(\bE_\eta-\bE)
+\nabla\times(\bH_\eta-\bH)}^2_{L^2(\Omega;\epsilon^{-1})}\\
&+C_2\N{\bK+\ri \wn (\mu_\delta-\mu)\bH_\eta + \ri\omega\mu(\bH_\eta-\bH)
-\nabla\times(\bE_\eta-\bE)}^2_{L^2(\Omega;\mu^{-1})} \nonumber\\
&+C_3\big\|\vartheta^{-1/2}\big(\bg-(\bH-\bH_\eta)_T\times \bn + \vartheta (\bE-\bE_\eta)_T\big)\big\|^2_{L^2(\partial \Omega)},
\label{eq:newapprox2imp}
\end{align}
where we have abbreviated the constants on the right-hand side of \eqref{eq:thm:BoundSmooth} to $C_1,C_2$, and $C_3$ to keep the notation concise.
As in the proof of Theorem \ref{thm:BoundRough},
the crucial point is that the $C_1, C_2, C_3$ corresponding to $\epsilon_\delta$ and $\mu_\delta$ can be taken to be the $C_1, C_2, C_3$ corresponding to $\epsilon$ and $\mu$ by \eqref{eq:limits2} and \eqref{eq:GrowthCoeff2}.

We now claim that the approximation properties \eqref{eq:approx} and \eqref{eq:newapprox2imp} imply that 
given $\epsilon, \mu$, $\bE, \bH$ (and associated $\bJ, \bK, \bg$), $\wn>0$, and $\zeta>0$ one can choose $\eta= \eta(\zeta)>0$ and $\delta=\delta(\zeta,\eta)>0$  such that 
the difference between the right-hand side of  \eqref{eq:newapprox2} and $C_1\|\bJ\|^2_{L^2(\Omega;\epsilon^{-1})} + C_2\|\bK\|^2_{L^2(\Omega;\mu^{-1})} + C_3 \|\vartheta^{-1/2}\bg\|^2_{L^2(\partial \Omega)}$
is $\leq \zeta/4$.
The proof of this claim is almost exactly the same as the proof of the analogous claim in the proof of Theorem \ref{thm:BoundRough}, except now the first step of choosing $\eta$ depending on $\zeta$ also involves the terms on $\partial\Omega$ arising from $(\bH-\bH_\eta)_T$ and $(\bE-\bE_\eta)_T$ as well as the terms in $\Omega$ in \eqref{eq:Friday2} and \eqref{eq:Friday3}.
Once this claim is established, without loss of generality, we can further assume that 
$\eta\leq \zeta$ (also exactly as in the proof of Theorem \ref{thm:BoundRough}).

The proof then proceeds exactly as in the proof of Theorem \ref{thm:BoundRough}, in particular using \eqref{eq:newapprox00} with $B_R$ replaced by $\Omega$. The end result is that
\beqs
\Tnorm{(\bE,\bH)}^2_{\epsilon,\mu,\mathrm{imp}} \leq C_1 \N{\bJ}^2_{L^2(\Omega;\epsilon)} + C_2\N{\bK}^2_{L^2(\Omega;\mu)} + C_3  \|\vartheta^{-1/2}\bg\|^2_{L^2(\partial \Omega)}+ \zeta,
\eeqs
and since $\zeta>0$ was arbitrary, the result follows.

\appendix

\section{Density of \texorpdfstring{$\DOmegabar^3$}{D} in the space \texorpdfstring{$V(D,\MA)$}{V}}\label{app:density}

Throughout this appendix $D\subset \Rea^d, d \geq 2$ is a bounded Lipschitz open set. 
We are mainly interested in the case $d=3$, but some results hold for general $d\geq 2$, and thus we specify the values of $d$ explicitly in the statements.
In this appendix only, we use $\gamma$ to denote the standard Dirichlet trace operator $\gamma:H^1(D)\to H^\half(\partial D)$.

The goal of this appendix is to prove the following density result.

\begin{theorem}\label{thm:Density}
Let $\MA\in C^1(\overline{D},\SPD)$ with $d=3$.
Then $\DOmegabar^{3}:=\{ U|_D : U \in C^\infty(\Rea^d)^3\}$ is dense in the space $V(D,\MA)$ defined by \eqref{eq:V}.
\end{theorem}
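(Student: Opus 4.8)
The plan is to adapt the strategy of \cite[Theorem~1]{CoD98}, which handles the case $\MA = \MI$, replacing the use of the constant-coefficient layer potentials by the variable-coefficient harmonic analysis of \cite{MiTa:99, MiTa:01}. The key observation is that the space $V(D,\MA)$ carries the natural graph norm
\beqs
\N{\bv}_{V(D,\MA)}^2 := \N{\bv}_{H(\curl;D)}^2 + \N{\nabla\cdot[\MA\bv]}_{L^2(D)}^2 + \N{\MA\bv\cdot\hn}_{L^2(\partial D)}^2 + \N{\bv_T}_{L^2_T(\partial D)}^2,
\eeqs
and we must show $C^\infty(\overline D)^3$ is dense in it. First I would reduce to a local statement: using a smooth partition of unity subordinate to a finite cover of $\overline D$ by balls in which $\partial D$ is (after rotation) a Lipschitz graph, it suffices to approximate $\bv\in V(D,\MA)$ supported in one such ball. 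Away from $\partial D$, interior mollification works directly, so the content is at the boundary.

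For the boundary piece, the idea is a \emph{pullback-and-dilation} argument: in the graph coordinates, translate the graph downward (equivalently, dilate $D$ slightly about an interior point) so that $\bv$, transported to the enlarged domain, is smooth in a neighbourhood of the original $\overline D$; then mollify. The obstruction is that translation/dilation does \emph{not} commute with the three boundary-compatible conditions defining $V$ — in particular $\nabla\cdot[\MA\bv]\in L^2(D)$ and the two trace conditions are not obviously stable under the transformation, because $\MA$ is non-constant and the normal changes. This is exactly where one needs a Helmholtz-type (regular) decomposition adapted to $\MA$: write $\MA\bv = \nabla\times \bz + \MA\nabla\phi$ (or a suitable variant) with $\bz$ more regular and $\phi$ solving $\nabla\cdot[\MA\nabla\phi] = \nabla\cdot[\MA\bv]\in L^2(D)$; the elliptic operator $\nabla\cdot[\MA\nabla\,\cdot\,]$ with $\MA\in C^1$ is precisely the setting of the regularity and layer-potential estimates in \cite{MiTa:99, MiTa:01} on Lipschitz domains, which give the needed control of $\phi$ in $H^{3/2}(D)$ and of its Neumann data $\MA\nabla\phi\cdot\hn$ in $L^2(\partial D)$. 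One then approximates $\bz$ (a more regular vector field, e.g.\ in $H^1$) by smooth fields in the stronger norm, and approximates $\phi$ by smooth functions in $H^{3/2}(D)$ with convergence of the conormal trace in $L^2(\partial D)$ — the latter being the step that genuinely uses the harmonic-analysis input and is the main obstacle. The tangential-trace condition $\bv_T\in L^2_T(\partial D)$ is handled in parallel: since $\bv_T = (\MA^{-1}(\MA\bv))_T$ and $\MA^{-1}\in C^1(\overline D,\SPD)$, continuity of multiplication by $\MA^{-1}$ on the relevant trace spaces transfers the control from $\MA\bv$ back to $\bv$.

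Assembling: given $\bv\in V(D,\MA)$ and $\varepsilon>0$, decompose $\MA\bv = \nabla\times\bz + \MA\nabla\phi$, approximate $\bz$ by $\bz_k\in C^\infty(\overline D)^3$ in $H^1(D)^3$ and $\phi$ by $\phi_k\in C^\infty(\overline D)$ in $H^{3/2}(D)$ with $\MA\nabla\phi_k\cdot\hn\to\MA\nabla\phi\cdot\hn$ in $L^2(\partial D)$ (using \cite{MiTa:99, MiTa:01}), set $\bv_k := \MA^{-1}(\nabla\times\bz_k + \MA\nabla\phi_k)\in C^\infty(\overline D)^3$, and check that each of the four terms in $\N{\bv-\bv_k}_{V(D,\MA)}$ tends to $0$: the $H(\curl)$ and $L^2$-divergence terms from $H^1$- and $H^{3/2}$-convergence plus the product rule for $C^1\times H^1$ and $C^1\times H^{1/2}$ functions, the normal-trace term from the chosen Neumann-data convergence together with the smoothness of $\nabla\times\bz_k$, and the tangential-trace term from continuity of $\MA^{-1}$ on $L^2_T(\partial D)$ as above. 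Since $\varepsilon$ was arbitrary, this proves the density, and (as noted after Lemma~\ref{lem:IntegratedM}) also supplies the density assumed in \cite[Lemma~6.1]{Ve:19}.
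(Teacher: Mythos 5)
You have the right ingredients --- reduce to a scalar problem via a decomposition of $\bv$, bring in the Mitrea--Taylor harmonic analysis, and approximate by translating into the interior and mollifying --- and your decomposition $\MA\bv=\nabla\times\bz+\MA\nabla\phi$ is a reasonable variant of the paper's $\bv=\bz+\nabla\phi$ (via the regular decomposition of $H(\curl;D)$). But the central step is asserted, not proved, and as stated it is not correct.

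The gap is the claim that it suffices to approximate $\phi$ by $\phi_k\in C^\infty(\conj D)$ in $H^{3/2}(D)$ with $\MA\nabla\phi_k\cdot\hn\to\MA\nabla\phi\cdot\hn$ in $L^2(\partial D)$. On a Lipschitz domain the conormal-derivative map is \emph{not} continuous from $H^{3/2}(D)$ to $L^2(\partial D)$, so $H^{3/2}$-convergence of $\phi_k$ gives no control of $\MA\nabla\phi_k\cdot\hn$ on $\partial D$; you have labelled this ``the main obstacle'' but left it untouched, and $H^{3/2}$ regularity by itself cannot supply what is needed. The correct quantity to use is the non-tangential maximal function estimate $(\MA\nabla u)^*\in L^2(\partial D)$ from \cite{MiTa:99,MiTa:01} (Theorem~\ref{thm:MiTareg}/Corollary~\ref{cor:reg}), not Sobolev regularity. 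Concretely, the paper first splits the scalar function into an $H^2$-regular part (solving an inhomogeneous equation in $\IR^d$, so $H^2_{\rm loc}$ by interior regularity) and a homogeneous part $u$; for $u$ it applies the translation $u_t(\bx)=u(\bx-t\hb_i)$ localised by a partition of unity, and shows $\|u-u_t\|_{V_{\rm scal}(D;\MA)}\to0$. The convergence of $\partial_{n,\MA}u_t$ to $\partial_{n,\MA}u$ in $L^2(\partial D)$ is then obtained by \emph{dominated convergence}, with the dominating function supplied precisely by $(\MA\nabla u)^*$, and Lemma~\ref{lem:normal} is needed to identify the non-tangential-limit conormal derivative with the variational one. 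Your ``pullback-and-dilation'' paragraph gestures at this translation argument, but without the non-tangential maximal function bound and its role as the dominating function there is no mechanism to pass the conormal trace to the limit, so the proof as written does not close.
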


Let $V_{\rm scal}(D;\MA)$ (subscript ``$\rm{scal}$" for scalar) be defined by
\beq\label{eq:Vs}
V_{\rm scal}(D;\MA):= \Big\{ v\in H^1(D) : \nabla\cdot [\MA\nabla v] \in L^2(D), \partial_{n, \MA}v\in L^2(\partial D), \gamma v \in H^1(\partial D)\Big\},
\eeq
where $\MA \in C^{0,1}(\overline{D},\SPD)=W^{1,\infty}(D,\SPD)$ and $\partial_{n, \MA}v$ is the conormal derivative defined such that $\partial_{n, \MA}w= \hn \cdot  (\MA \nabla w)$ for $w\in H^2(D)$.

The following theorem of Ne\v{c}as says that either of the conditions on $\partial D$ in \eqref{eq:Vs} can be removed.

\begin{theorem}\mythmname{\!\!\cite[\S5.1.2, \S5.2.1]{Ne:67}, \cite[Theorem 4.24]{MCL00}}\label{thm:Necas}
Let $d\geq 2$ and $u\in H^1(D)$ with $\nabla\cdot [\MA\nabla u]\in L^2(D)$, where $\MA\in W^{1,\infty}(D, \SPD)$. Then

(i) if $\partial_{n,\MA} u \in L^2(\partial D)$ then $\gamma u \in H^1(\partial D)$, and

(ii) if $\gamma u \in H^1(\partial D)$ then $\partial_{n,\MA} u \in L^2(\partial D)$.
\end{theorem}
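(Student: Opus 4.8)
The plan is to prove both implications at once via a Rellich–Nečas (Pohozaev-type) integral identity. First fix a vector field $\bbeta\in C^\infty(\overline D)^3$ that is uniformly transversal to $\partial D$, i.e.\ $\bbeta\cdot\hn\ge c_0>0$ for almost every boundary point where $\hn$ is defined. Such a field exists because $D$ is Lipschitz: cover $\partial D$ by finitely many open sets in which $D$ is, after a rigid rotation $R_k$, the epigraph of a Lipschitz function, take the constant field $R_k^{-1}(-e_d)$ on the $k$-th patch (it satisfies $R_k^{-1}(-e_d)\cdot\hn\ge(1+L^2)^{-1/2}>0$ there, $L$ being the common Lipschitz constant), and glue with a partition of unity; this is the same star-shaped-by-patches idea underlying Lemma~\ref{lem:star}.

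Assume first $u\in C^\infty(\overline D)$ and write $f:=\nabla\cdot[\MA\nabla u]\in L^2(D)$. Multiply $\nabla\cdot[\MA\nabla u]=f$ by $\overline{\bbeta\cdot\nabla u}$, take real parts, integrate over $D$, and integrate by parts twice exactly as in the manipulations in the proof of Lemma~\ref{lem:Morawetz1}: all second derivatives of $u$ recombine into a total divergence, leaving bulk terms that involve only $\nabla\bbeta$, $(\bbeta\cdot\nabla)\MA$, $f$, and first derivatives of $u$. Since $\MA\in W^{1,\infty}(D,\SPD)$ and $\bbeta\in C^\infty(\overline D)$, these bulk terms are bounded, after one use of Young's inequality, by $C\big(\|u\|_{H^1(D)}^2+\|f\|_{L^2(D)}^2\big)$ with $C$ depending on $\|\MA\|_{W^{1,\infty}(D)}$ and $\|\bbeta\|_{W^{1,\infty}(D)}$. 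The surface integral is $\int_{\partial D}\big(2\Re\{(\partial_{n,\MA} u)\overline{\bbeta\cdot\nabla u}\}-(\bbeta\cdot\hn)\,\MA\nabla u\cdot\overline{\nabla u}\big)$, and Lemma~\ref{lem:NormalTangentMat} (with $\alpha=\MA$, $\bv=\nabla u$, and $\bx$ replaced by $\bbeta$) rewrites it as $\int_{\partial D}(\bbeta\cdot\hn)\big(\MA\hn\cdot\hn\,|\partial_n u|^2-\MA\nabla_T u\cdot\overline{\nabla_T u}\big)$ plus a cross term $\int_{\partial D}2\Re\{(\nabla_T u\cdot\bbeta_T)(\partial_{n,\MA} u)\}$, where $\partial_n u:=\nabla u\cdot\hn$ and $\nabla_T u:=\nabla_{\partial D}(\gamma u)$. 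Using $\MA\in\SPD$ (so $\MA\hn\cdot\hn\ge\lambda_{\min}>0$ and $\MA\nabla_T u\cdot\overline{\nabla_T u}\ge\lambda_{\min}|\nabla_T u|^2$), $\bbeta\cdot\hn\ge c_0>0$, the pointwise relation $\partial_{n,\MA} u=(\MA\hn\cdot\hn)\partial_n u+\MA\nabla_T u\cdot\hn$, and Young's inequality on the cross term, one obtains
\[
\Big|\,\|\partial_n u\|_{L^2(\partial D)}^2-\|\nabla_T u\|_{L^2(\partial D)}^2\,\Big|\le C\Big(\|u\|_{H^1(D)}^2+\|f\|_{L^2(D)}^2\Big)+\tfrac12\Big(\|\partial_n u\|_{L^2(\partial D)}^2+\|\nabla_T u\|_{L^2(\partial D)}^2\Big),
\]
so each of $\|\partial_n u\|_{L^2(\partial D)}^2$, $\|\nabla_T u\|_{L^2(\partial D)}^2$ is bounded by a constant times the other plus $\|u\|_{H^1(D)}^2+\|f\|_{L^2(D)}^2$. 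Since $\gamma u\in H^1(\partial D)$ is equivalent to $\nabla_T u\in L^2(\partial D)$, and (given $\nabla_T u\in L^2(\partial D)$) $\partial_{n,\MA} u\in L^2(\partial D)$ is equivalent to $\partial_n u\in L^2(\partial D)$ by the displayed relation between them, both (i) and (ii) follow for smooth $u$.

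To pass to general $u\in H^1(D)$ with $\nabla\cdot[\MA\nabla u]\in L^2(D)$, use interior elliptic regularity — valid because $\MA\in W^{1,\infty}$ — to get $u\in H^2_{\mathrm{loc}}(D)$, hence $u|_{D'}\in H^2(D')$ for every $D'\Subset D$; in particular the identity and estimate above are legitimate on any Lipschitz subdomain compactly contained in $D$. Take Lipschitz domains $D_j\Subset D$ with $D_j\nearrow D$, chosen as smoothed sublevel sets of a fixed function comparable to $\mathrm{dist}(\cdot,\partial D)$ so that they are uniformly Lipschitz and their boundaries are uniform bi-Lipschitz perturbations of $\partial D$; arrange $\bbeta$ so that $\bbeta\cdot\hn_j\ge c_0/2$ on each $\partial D_j$. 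Applying the argument above on each $D_j$ yields the estimate with a constant independent of $j$. The delicate point — and the main obstacle — is the limit $j\to\infty$: one must show that if, say, $\partial_{n,\MA} u\in L^2(\partial D)$ then $\sup_j\int_{\partial D_j}|\partial_n u|^2<\infty$, so that the $j$-uniform estimate transfers a bound to $\int_{\partial D_j}|\nabla_T u|^2$ and a Fatou-type argument along the converging surfaces $\partial D_j$ gives $\nabla_T u\in L^2(\partial D)$ (and symmetrically for (ii)). This is where the genuinely Lipschitz nature of $\partial D$ bites, and it is carried out in \cite[\S5.1.2, \S5.2.1]{Ne:67} (see also \cite[Theorem 4.24]{MCL00}); the key technical point exploited there is that when a boundary patch is flattened, the transformed coefficient matrix, although only bounded measurable in the tangential variables, has a \emph{bounded} derivative in the flat normal direction — which is exactly the regularity the Rellich multiplier $\partial_n\bar u$ uses — so the estimate survives the reduction. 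We refer to those sources for the full details of this limiting step.
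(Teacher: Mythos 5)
The paper itself offers no proof of this statement: it is quoted directly from N\v{e}cas and McLean (hence the citations in the theorem header), so there is no internal argument to compare yours against; your sketch follows the same classical Rellich-identity route that those references use. The problem is that, as written, your proposal does not establish anything beyond what it re-cites. The part you carry out in detail is the a priori identity for $u\in C^\infty(\overline D)$; but for such $u$ both conclusions (i) and (ii) hold trivially ($\gamma u$ is Lipschitz and $\hn\cdot\MA\nabla u$ is bounded), so the smooth-case estimate is not the content of the theorem. The entire content is the qualitative statement for $u$ merely in $H^1(D)$ with $\nabla\cdot[\MA\nabla u]\in L^2(D)$ and one boundary quantity known to be in $L^2(\partial D)$ (resp.\ $H^1(\partial D)$): one must (a) run the identity on approximating surfaces where $u\in H^2$ by interior regularity, (b) prove the \emph{uniform} bound on the ``known'' side over those surfaces --- e.g.\ that $\partial_{n,\MA}u\in L^2(\partial D)$, which is defined only weakly via Green's identity, forces $\sup_j\|\hn_j\cdot\MA\nabla u\|_{L^2(\partial D_j)}<\infty$ --- and (c) identify the weak limits of the traces on $\partial D_j$ with $\gamma u$ and $\partial_{n,\MA}u$ on $\partial D$. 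Step (b) is not an innocuous technicality; it is exactly where the Lipschitz geometry and an absorption argument in a boundary collar enter, and your proposal imports it wholesale from N\v{e}cas/McLean. Since the paper already cites those sources for the whole theorem, your write-up amounts to the same citation preceded by a formal computation, so there is a genuine gap.

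Two smaller points. First, your displayed ``two-sided'' inequality is stated with unweighted boundary norms; what the identity actually yields is that each of $\int_{\partial D}(\bbeta\cdot\hn)(\MA\hn\cdot\hn)\,|\partial_n u|^2$ and $\int_{\partial D}(\bbeta\cdot\hn)\,\MA\nabla_T u\cdot\overline{\nabla_T u}$ is controlled by a constant times the other plus $\|u\|_{H^1(D)}^2+\|f\|_{L^2(D)}^2$; that is what you need, but the weights (and the relation $\partial_{n,\MA}u=(\MA\hn\cdot\hn)\partial_n u+\MA\nabla_T u\cdot\hn$) must be kept when absorbing the cross term, since the constants there are what make the absorption legitimate. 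Second, switching from translated-graph patches (used to build $\bbeta$) to ``smoothed sublevel sets of the distance function'' for the approximating domains $D_j$ is not automatic: uniform Lipschitz character and $\bbeta\cdot\hn_j\ge c_0/2$ on $\partial D_j$ are not guaranteed for level sets of the distance function, whose normals are not controlled; the standard device, and the one used elsewhere in this paper (see the shifted hypographs $D_i-t\hb_i$ in the proof of Lemma~\ref{lem:normal}), is to translate each graph patch in its own transversal direction.
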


\ble\mythmname{Density for $V(D,\MA)$ follows from density for $V_{\rm scal}(D;\MA)$}\label{lem:ScalVecDens}
If $d=3$, $\MA\in W^{1,\infty}(D,\SPD)$ and  $\DOmegabar$ is dense in $V_{\rm scal}(D;\MA)$, then $\DOmegabar^3$ is dense in $V(D;\MA)$.
\ele

\bpf
Given $\bv\in V(D;\MA)$, by the regular decomposition lemma for functions in $H(\curl; D)$ (see, e.g., \cite[Lemma 2.4]{HiptmairActa}) there exist $\bz \in H^1(D)^3$ and $\phi \in H^1(D)$ such that $\bv= \bz + \nabla \phi$. 
Now
\beqs
\nabla\cdot [\MA \nabla\phi]= -\nabla\cdot [\MA \bz] + \nabla\cdot [\MA \bv],
\eeqs
which is in $L^2(D)$ (since $\MA\in W^{1,\infty}(D,\SPD)$ and $\bv \in V(D,\MA)$). Since $(\MA\bv)\cdot \hn\in L^2(\partial D)$, $\partial_{n,\MA}\phi \in L^2(\partial D)$, and thus the Ne\v{c}as result (Theorem \ref{thm:Necas}) implies that $\phi \in V_{\rm scal}(D;\MA)$. 
Define the norms on $V(D;\MA)$ and $V_{\rm scal}(D, \MA)$ by
\beqs
\N{\bv}_{V(D,\MA)}:=\N{\bv}_{H(\curl; D)}+\N{\nabla \cdot [\MA \bv]}_{L^2(D)}+ \N{\MA\bv\cdot\hn}_{L^2 (\partial D)}+ \N{\bv_T}_{L^2_T (\partial D)}
\eeqs
and 
\beq\label{eq:normVscal}
\N{v}_{V_{\rm scal}(D,\MA)}:=\N{v}_{H^1(D)}+\N{\nabla \cdot [\MA \nabla v]}_{L^2(D)}+ \N{\partial_{n,\MA}v}_{L^2 (\partial D)}+ \N{\gamma v}_{H^1(\partial D)}.
\eeq
These definitions and the boundedness of $\gamma :H^1(D)\to H^{1/2}(\partial D)$ imply that if $\bv = \bz + \nabla\phi$ then
\beqs
\N{\bv}_{V(D,\MA)} \leq \N{\phi}_{V_{\rm scal}(D,\MA)} +
 C\big(\N{A}_{W^{1,\infty}(D)} +1\big) \N{\bz}_{H^1(D)^3}
\eeqs
for some $C>0$ (independent of $\bv, \bz, \phi$, and $\MA$). Therefore, the density of $\DOmegabar$ in $V_{\rm scal}(D;\MA)$ and of $\DOmegabar^3$ in $H^1(D)^3$ implies the density of $\DOmegabar^3$ in $V(D,\MA)$.
\epf

The result of Theorem \ref{thm:Density} therefore follows if we can prove that $\DOmegabar$ is dense in $V_{\rm scal}(D;\MA)$ (Theorem \ref{thm:scalar_density} below). 
To do this, we first need to recall some facts about Lipschitz domains and nontangential approach sets.

The definition of a Lipschitz open set (see, e.g., \cite[Definition 3.28]{MCL00}, \cite[Definition A.2]{CGLS12})  implies that there exist finite families $\{ W_i\}$, $\{D_i\}$, $\{f_i\}$, and $\{M_i\}$, such that
\bit
\item[(i)] the family $\{W_i\subset \Rea^d\}$ is a finite open cover of $\partial D$,
\item[(ii)] the family $\{D_i \subset \Rea^d\}$ is such that $D\cap W_i = D_i\cap W_i$ for all $i$,
\item[(iii)] each $f_i: \Rea^{d-1}\rightarrow \Rea$ is Lipschitz continuous with Lipschitz constant $M_i\geq 0$, and
\item[(iv)] for each $i$ there exists a rigid motion (i.e.~a composition of a rotation and translation) $r_i : \Rea^d\rightarrow \Rea^d$ such that $r_i(D_i)= \{ (x',x_d)\in \Rea^{d-1}\times \Rea : x_d < f_i(x')\}$, the hypograph of~$f_i$.
\eit
Without loss of generality we take the rigid motions $\{r_i\}$ to be a family of pure rotations. Let $\{\chi_i\}$ be a partition of unity subordinate to $\{W_i\}$, i.e.
\beqs
\chi_i \in C_{\rm comp}^\infty(W_i), \quad \chi_i\geq 0, \quad\tand\quad \sum_{i} \chi_i(\bx)=1 \quad\tfa \bx\in \partial D.
\eeqs
For $\bx \in \supp \chi_i \cap \partial D$ and $\eps>0$, we define the segment %let
\beqs
\ell_{i,\eps}(\bx) := \big\{ \bx - t \hb_i; \,\,0 <t<\eps\big\},
\eeqs
where $\hb_i$ is the unit vector that satisfies $\hb_i = r_i^{-1}(\be_d)$; observe that 
$\ell_{i,\eps}(\bx)\subset D$ for all $\bx \in W_i\cap \partial D$ and sufficiently small $\eps>0$.

\begin{defin}\mythmname{Non-tangential approach set}
Given $\bx \in \partial D$, let 
\beqs
\Theta(\bx):= \Big\{ \by \in D : |\bx-\by| < \min\big\{c, C \dist(\bx,\partial D)\big\}\Big\}
\eeqs
for $C>1$ sufficiently large (depending on the Lipschitz character of $D$) 
and $c>0$ sufficiently small such that $\Theta(\bx) \subset D$ and $\overline{\Theta(\bx)}\cap \partial D = \{\bx\}$.
\end{defin}

Observe that this definition implies that there exists $\eps^*$ (independent of $i$) such that 
\beq\label{eq:lieps}
\ell_{i,\eps}(\bx) \subset \Theta(\bx) \quad \tfa \bx \in \supp \chi_i \cap \partial D \tand 0<\eps\leq \eps^*.
\eeq

\begin{defin}\mythmname{Non-tangential maximal function}
Given the family of non-tangential approach sets $\{\Theta(\bx): \bx \in \partial D\}$ and $u \in C(D)$, 
let
\beqs
u^*(\bx) := \sup_{\by \in \Theta(\bx)} \big|u(\by)\big|, \quad \bx \in \partial D.
\eeqs
\end{defin}

\begin{defin}\mythmname{Conormal derivative via non-tangential limit}
Given the family of non-tangential approach sets $\{\Theta(\bx): \bx \in \partial D\}$, $u \in C^1(D)$, and $\MA\in C(D,\SPD)$, let
\beq\label{eq:ntnormal}
\dudnA(\bx) := \hn(\bx) \cdot \left( \lim_{\by \rightarrow \bx, \by \in \Theta(\bx)} \MA(\by) \nabla u (\by)\right)
\eeq
for all $\bx \in \partial D$ for which $\hn$ is defined.
\end{defin}

\begin{theorem}\mythmname{Regularity of the solution of the Neumann problem \cite{MiTa:99, MiTa:01}}\label{thm:MiTareg}
For $d\geq 2$, given $\MA\in C^{0,1}(\overline{D},\SPD)$, $V\in L^\infty(D)$ with $V>0$ on $\overline{D}$, and $g\in L^2(\partial D)$, there exists a unique $u \in H^1(D)\cap C^{1,s}(D)$ for all $0<s<1$ such that
\begin{align*}
&\nabla\cdot[\MA \nabla u] - V u = 0 \,\, \tin D, \quad \dudnA = g \,\, \ton \partial D, \quad\tand\\
& \hspace{2cm} \N{(\MA\nabla u)^*}_{L^2(\partial D)} \leq C\N{g}_{L^2(\partial D)}
\end{align*}
for some $C>0$ (independent of $u$ and $g$).
\end{theorem}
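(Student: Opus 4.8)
The plan is to obtain existence, uniqueness and the interior regularity by elementary variational and elliptic arguments, and then to prove the nontangential maximal function estimate together with the identification of the conormal derivative by the boundary layer potential method, in the variable-coefficient form developed in \cite{MiTa:99, MiTa:01}.

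First I would treat existence and uniqueness in $H^1(D)$. Since $\MA\in C^{0,1}(\overline D,\SPD)$ and $\overline D$ is compact, $\MA$ is uniformly positive definite on $\overline D$, and, reading $V>0$ on $\overline D$ as $\essinf_D V>0$, the sesquilinear form $a(u,w):=\int_D \MA\nabla u\cdot\nabla\conj w+\int_D Vu\conj w$ is bounded and coercive on $H^1(D)$. As $g\in L^2(\partial D)\hookrightarrow H^{-1/2}(\partial D)$, the Lax--Milgram lemma produces a unique $u\in H^1(D)$ with $a(u,w)=\int_{\partial D}g\,\conj{\gamma w}$ for all $w\in H^1(D)$, which is the weak form of the Neumann problem. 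For the interior regularity, in any $D'\Subset D$ the De Giorgi--Nash--Moser theorem (valid since the coefficients are bounded and measurable) gives $u\in C^{0,\alpha}_{\rm loc}(D)$, so $u$ is locally bounded and $Vu\in L^\infty_{\rm loc}(D)$; because $\MA$ is continuous, the Calder\'on--Zygmund $W^{2,p}$ estimates for divergence-form operators then give $u\in W^{2,p}_{\rm loc}(D)$ for every $p<\infty$, and Sobolev embedding yields $u\in C^{1,s}_{\rm loc}(D)$ for every $0<s<1$, which is the claimed $C^{1,s}(D)$ membership.

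It remains to establish the boundary estimate and the meaning of $\dudnA=g$. Here I would set $L:=\nabla\cdot[\MA\nabla\,\cdot\,]-V$ and construct a fundamental solution $E(x,y)$ of $L$ — available for $C^{0,1}$ coefficients, e.g.\ by a parametrix obtained from freezing the coefficients of $\MA$ and composing with the Newtonian potential, in the pseudodifferential calculus with coefficients of limited smoothness of \cite{MiTa:99, MiTa:01} — and study the single-layer potential $\calS f(x):=\int_{\partial D}E(x,y)f(y)\,d\sigma(y)$. One shows: (i) $L(\calS f)=0$ in $D$ and in $\Rea^d\setminus\overline D$; (ii) the nontangential maximal function of $\MA\nabla\calS f$ is controlled in $L^2(\partial D)$ by $\N{f}_{L^2(\partial D)}$, the nontangential limits of $\MA\nabla\calS f$ exist $\sigma$-a.e.\ from both sides, and the jump relations $\dudnA^{\mathrm{int/ext}}(\calS f)=(\mp\tfrac12 I+\calK^{*})f$ hold, where $\calK^{*}$ is the principal-value singular integral operator dual to the double layer; the $L^2(\partial D)$ boundedness of these operators on the Lipschitz surface rests on the Coifman--McIntosh--Meyer theorem for the Cauchy integral and Calder\'on commutators on Lipschitz graphs. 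Next one proves that the boundary operator $\Lambda:=\dudnA^{\mathrm{int}}\circ\calS$ is boundedly invertible on $L^2(\partial D)$: since $\calK^{*}$ is not compact on a genuine Lipschitz surface, compactness is replaced by a Rellich identity — integration by parts against a vector field transverse to $\partial D$ — which yields, for solutions $u$ of $Lu=0$ with $(\MA\nabla u)^{*}\in L^2(\partial D)$, the two-sided estimate $\N{(\MA\nabla u)\cdot\hn}_{L^2(\partial D)}\approx\N{(\nabla u)_T}_{L^2(\partial D)}$; combining this with the jump relations, with injectivity (which uses $V>0$ to kill the constants obstruction present for the Laplacian), and with a localization/analytic-Fredholm argument gives $\Lambda^{-1}$ bounded.

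Setting $u:=\calS(\Lambda^{-1}g)$ then produces a function with $Lu=0$ in $D$, $(\MA\nabla u)^{*}\in L^2(\partial D)$, $\N{(\MA\nabla u)^{*}}_{L^2(\partial D)}\le C\N{g}_{L^2(\partial D)}$, and $\dudnA u=g$ $\sigma$-a.e. A Green's identity valid under the hypothesis $(\MA\nabla u)^{*}\in L^2(\partial D)$ then shows $a(u,w)=\int_{\partial D}(\dudnA u)\conj{\gamma w}$ for all $w\in H^1(D)$, so $u$ is the weak solution and hence, by the Lax--Milgram uniqueness, coincides with the unique $H^1$ solution; conversely any $u\in H^1(D)\cap C^{1,s}(D)$ solving the stated problem is a weak solution, giving the asserted uniqueness. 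The main obstacle is precisely the variable-coefficient layer potential theory on Lipschitz boundaries — the $L^2$ boundedness and a.e.\ jump relations for $\MA\nabla\calS$ on $\partial D$ and the invertibility of $\Lambda$ via the Rellich identity — which is the substantial content of \cite{MiTa:99, MiTa:01}.
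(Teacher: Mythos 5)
Your proposal is correct and follows essentially the same route as the paper, whose ``proof'' is simply a citation: the substantial content (nontangential-maximal estimates, jump relations and invertibility of the boundary operator for the variable-coefficient single layer on a Lipschitz domain, via Rellich-type identities) is exactly the Mitrea--Taylor machinery of \cite{MiTa:99,MiTa:01} that you outline, and your elementary additions (Lax--Milgram for the weak problem, interior $W^{2,p}$ bootstrap using $\MA\in C^{0,1}$ to pass to nondivergence form) are sound. The one point to make explicit is that before applying Green's identity to identify $u=\calS(\Lambda^{-1}g)$ with the Lax--Milgram solution you need $u\in H^1(D)$; the paper gets this from the mapping property of the single-layer potential \cite[Proposition~7.9]{MiTa:99} (alternatively it follows from the $L^2$ bound on $(\MA\nabla u)^*$ by integrating along transversal segments near $\partial D$).
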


\bpf[References for the proof]
When $\MA\in C^{1}(\overline{D},\SPD)$, this result, without the statement that $u\in H^1(D)$, is \cite[Theorem 6.1]{MiTa:99}, with \cite[\S2]{MiTa:01} then explaining how \cite[Theorem 6.1]{MiTa:99} also holds when $\MA\in C^{0,1}(\overline{D},\SPD)$. The fact that $u\in H^1(D)$ follows from the representation of $u$ as a single-layer potential \cite[Equation 6.2]{MiTa:99}, \cite[Equation 5.3]{MiTa:01}, and the mapping property of this potential in \cite[Proposition 7.9]{MiTa:99}.
\epf

The next lemma relates the conormal derivative defined by \eqref{eq:ntnormal} to the standard conormal derivative in $H^{-1/2}(\partial D)$ defined via Green's identity (denoted by $\partial_{n,\MA}u$).

\ble\label{lem:normal}
For $d\geq 2$, if $\MA \in C^{0,1}(\overline{D},\Sym)$ and $u\in H^1(D)\cap C^1(D)$ with $\nabla\cdot (\MA\nabla u)\in L^2(D)$, $(\MA \nabla u)^*\in L^2(\partial D)$, and $\partial u /\partial n_{\MA}\in L^2(\partial D)$, then $\partial u/\partial n_{\MA} = \partial_{n,\MA} u$.
\ele

\bpf
We follow the proof of the result when $\MA =\MI$ in \cite[Lemma A.10]{CGLS12}.
By the definition of $\partial_{n,\MA}u$ (see, e.g., \cite[Lemma 4.3]{MCL00}, \cite[Pages 280--281]{CGLS12}), it is sufficient to prove that 
\beq\label{eq:normal1}
\int_{\partial D} \dudnA \,\overline{\gamma v} = \int_D \MA \gu\cdot \overline{\gv} + \overline{v}\, \nabla\cdot [\MA \gu],
\eeq
for all $v\in H^1(D)$. Since $C^\infty(D)$ is dense in $H^1(D)$, it is sufficient to prove that \eqref{eq:normal1} holds for all $v\in C^\infty(D)$. 

Let $\{1,2,\ldots,N\}$ be a finite index set for $\{\chi_i\}$; i.e.~$\{\chi_i\} = \{\chi_i : 1,\ldots,N\}$. Let $\chi_0 \in C^\infty(\overline{D})$ be defined by
\beq\label{eq:normal2}
\chi_0(\bx) := 1- \sum_{i=1}^N \chi_i(\bx), \quad \bx \in \overline{D}, \quad \text{ so that } \sum_{i=0}^N\chi_i(\bx) =1 \quad \bx\in \overline{D}.
\eeq
We now claim that it is sufficient to prove that \eqref{eq:normal1} holds for all $v_i := \chi_i v$ with $v\in C^\infty(D)$. Indeed, assuming this result, we have
\begin{align*}
\int_{\partial D} \dudnA \overline{\gamma v} = \sum_{i=0}^N \int_{\partial D} \dudnA \overline{\gamma v_i} &= \sum_{i=0}^N \int_D \MA \gu \cdot \overline{\gv}_i + \overline{v}_i \nabla \cdot [\MA \gu], \\
&= 
 \sum_{i=0}^N \int_D\chi_i \Big( \MA \gu \cdot \overline{\gv} + \overline{v} \nabla \cdot [\MA \gu]\Big) \,\, \left(\text{since}  \sum_{i=0}^N \nabla\chi_i=0 \right)\\
& =\int_D  \MA \gu \cdot \overline{\gv} + \overline{v} \nabla \cdot [\MA \gu],
\end{align*}
i.e.~\eqref{eq:normal1} holds.
For $i=0$, $v_0 = \chi_0 v\in H^1_0(D)$ and 
\beqs
\int_D  \MA \gu \cdot \overline{\gv_0} + \overline{v}_0 \nabla \cdot [\MA \gu]=0
\eeqs
by Green's identity for $u\in H^1(D)$ with $\nabla\cdot[\MA \nabla u]\in L^2(D)$ and $v_0\in H^1(D)$ \cite[Lemma 4.3]{MCL00}, \cite[Equation A.29]{CGLS12} so that \eqref{eq:normal1} holds with $v=v_0$.

For $i=1,\ldots, N$, let $D_{i,t}:= D_i - t \hb_i$ for $t>0$,
where $\hb_i:= r_i^{-1}(\be_d)$ as above;
i.e.~$D_{i,t}$ is the (rotated) hypograph $D_i$ shifted down by $t$. 
Let $G_i:= \supp \chi_i \cap D_{i,t}$. By interior regularity of the operator $(\nabla\cdot [\MA \nabla])^{-1}$ (see, e.g., \cite[Theorem 4.16]{MCL00}) $u\in H^2(G)$ and then, by Green's identity for $u\in H^2$ and $v\in H^1$ \cite[Lemma 4.1]{MCL00}, \cite[Equation A.26]{CGLS12},
\beqs
\int_{\partial D_{i,t}}\! \hn(\by) \cdot \gamma \big( \MA(\by) \gu(\by)\big)  \overline{\gamma v_i}(\by) \rd s(\by) 
= \int_{D_{i,t}}\! \Big(\MA(\by) \gu(\by) \cdot \overline{\gv_i}(\by) + \overline{v_i}(\by) \nabla\cdot [\MA(\by) \gu(\by)] \Big) \rd \by.
\eeqs
Using the change of variable $\by = \bx - t\hb_i$ for $\by \in \partial D_{i,t}$ and $\bx\in \partial D_i$, observing that $\hn(\by)= \hn(\bx)$ in this case, 
and recalling that $\MA\in C^{0,1}(\overline{D},\SPD)$ and $\gu\in C(D)$ (so we can omit the trace operator from $\MA \gu$), 
we have
\beq\label{eq:normal3}
\int_{\partial D_i} \hn(\bx) \cdot \Big( \big( \MA \gu\big) \, \overline{\gamma v_i}\Big)(\bx- t\hb_i) \,\rd s(\bx) 
= \int_{D_i} \Big(\MA \gu \cdot \overline{\gv_i} + \overline{v_i} \nabla\cdot (\MA \gu) \Big)(\bx - t\hb_i) \rd \bx.
\eeq
With $\phi_t(\bx):= \phi(\bx- t \hb_i)$, observe that $\phi_t(\bx)- \phi(\bx)\tendo$ if $\phi$ is continuous at $\bx$, and thus (by density of continuous functions in $L^2$) $\phi_t- \phi\tendo$ in $L^2$. Therefore, as $t\tendo$, the right-hand side of \eqref{eq:normal3} tends to 
\beqs
\int_{D_i} \MA \gu \cdot \overline{\gv_i} + \overline{v_i} \nabla\cdot [\MA \gu] = \int_D \MA \gu \cdot \overline{\gv_i} + \overline{v_i} \nabla\cdot [\MA \gu],
\eeqs
where for the last equality we have used the facts that $\supp v_i \subset \supp \chi_i \subset W_i$ and $D\cap W_i = D_i\cap W_i$.

For the left-hand side of \eqref{eq:normal3}, observe that 
\beqs
\Big| \hn(\bx) \cdot \Big(\big( \MA \gu\big) \, \overline{\gamma v_i}\Big)(\bx- t\hb_i) \Big| \leq C (\MA \gu)^*( \bx - t \hb_i) 
\eeqs
for some $C>0$ (dependent on $v_i$). Therefore, taking the limit as $t \tendo$ in the left-hand side of \eqref{eq:normal3} using the dominated convergence theorem (noting that $L^2(\supp \chi_i \cap \partial D_i) \subset L^1(\supp \chi_i \cap \partial D_i)$) along with the inclusion \eqref{eq:lieps} and the definition of $\partial u/\partial n_{\MA}$ \eqref{eq:ntnormal}, we find that the left-hand side of \eqref{eq:normal3} tends to $\int_{\partial D_i} \partial u /\partial n_{\MA} \,\overline{\gamma v_i}$, and the proof is complete.
\epf

Combining Theorem \ref{thm:MiTareg} and Lemma \ref{lem:normal} gives the following corollary.

\begin{cor}\label{cor:reg}
For $d\geq 2$, given $\MA\in C^{0,1}(\overline{D},\SPD)$, $V\in L^\infty(D)$ with $V>0$ on $\overline{D}$, and $g\in L^2(\partial D)$,
let $u\in H^1(D)$ be 
 the solution of the Neumann problem
\beqs
\nabla\cdot[\MA \nabla u] - V u = 0 \,\, \tin D, \quad \partial_{n,\MA}u= g \,\, \ton \partial D.
\eeqs
Then
\beq\label{eq:reg1}
u \in C^{1,s}(D) \,\text{ for all } 0<s<1, \quad (\MA \nabla u)^* \in L^2(\partial D), \quad \tand\quad \partial_{n,\MA} u = \dudnA. 
\eeq
\end{cor}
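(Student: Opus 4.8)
The plan is to derive Corollary \ref{cor:reg} by combining Theorem \ref{thm:MiTareg} with Lemma \ref{lem:normal}; the only genuine work is to reconcile the two notions of conormal derivative and then invoke uniqueness of the weak solution.

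First I would apply Theorem \ref{thm:MiTareg} with the given $\MA$, $V$, and $g$ to obtain $\tu\in H^1(D)$ that also lies in $C^{1,s}(D)$ for every $0<s<1$, satisfies $\nabla\cdot[\MA\nabla\tu]-V\tu=0$ in $D$ and $\partial\tu/\partial n_\MA=g$ on $\partial D$ in the non-tangential sense of \eqref{eq:ntnormal}, and obeys $(\MA\nabla\tu)^*\in L^2(\partial D)$. Since $V\in L^\infty(D)$ and $\tu\in H^1(D)\subset L^2(D)$, the PDE forces $\nabla\cdot[\MA\nabla\tu]=V\tu\in L^2(D)$; together with $\tu\in C^1(D)$, $(\MA\nabla\tu)^*\in L^2(\partial D)$ and $\partial\tu/\partial n_\MA=g\in L^2(\partial D)$, this places $\tu$ in the hypotheses of Lemma \ref{lem:normal}, which then gives $\partial_{n,\MA}\tu=\partial\tu/\partial n_\MA=g$, where $\partial_{n,\MA}$ now denotes the conormal derivative defined via Green's identity. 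Thus $\tu$ is a weak $H^1(D)$-solution of the Neumann problem exactly in the sense used in the corollary. Because $V>0$ on $\overline D$, the sesquilinear form $(\bv,\bw)\mapsto\int_D \MA\nabla\bv\cdot\overline{\nabla\bw}+V\bv\overline{\bw}$ is coercive on $H^1(D)$, so this weak solution is unique and hence $u=\tu$; the three conclusions in \eqref{eq:reg1} then follow immediately.

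The step that requires the most care — indeed the only subtle point — will be the identification via Lemma \ref{lem:normal} of the datum $g$ attained in the non-tangential sense by the solution produced by Theorem \ref{thm:MiTareg} with the same $g$ attained in the weak ($H^{-1/2}(\partial D)$) sense appearing in the hypothesis of the corollary. Once that identification is made, coercivity and uniqueness close the argument with no further computation.
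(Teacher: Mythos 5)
Your proposal is correct and follows exactly the paper's intended route: the paper simply states that Corollary~\ref{cor:reg} comes from ``combining Theorem~\ref{thm:MiTareg} and Lemma~\ref{lem:normal}'', and you have filled in precisely what that combination entails — apply Theorem~\ref{thm:MiTareg} to produce a regular solution $\tilde u$, note that the PDE forces $\nabla\cdot[\sfA\nabla\tilde u]=V\tilde u\in L^2(D)$ so Lemma~\ref{lem:normal} applies and identifies the non-tangential conormal derivative with the variational one, and then invoke coercivity (using $V>0$) to conclude $u=\tilde u$. No gaps.
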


We are now in a position to prove that $\DOmegabar$ is dense in $V_{\rm scal}(D;\MA)$, from which Theorem~\ref{thm:Density} follows.
The idea of the proof is to decompose a general element $w\in V_{\rm scal}(D;\MA)$ as the sum of a term $v$ that is the restriction of the solution of a non-homogeneous PDE in $\IR^d$ (which enjoys interior elliptic regularity) and a term $u$ that is the solution of a homogeneous PDE in $D$ (which enjoys the regularity provided by Corollary~\ref{cor:reg}).

\begin{theorem}\label{thm:scalar_density}
For $d\geq 2$, if $\MA \in C^1(\overline{D},\SPD)$ then $\DOmegabar$ is dense in $V_{\rm scal}(D;\MA)$.
\end{theorem}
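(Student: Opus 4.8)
The plan is to adapt the strategy of \cite{CoD98} for the Laplacian, with the harmonic-analytic input supplied by Theorem~\ref{thm:MiTareg}/Corollary~\ref{cor:reg} in place of the results used there. First extend $\MA$ to $\widetilde\MA\in C^1(\overline B,\SPD)$ for a ball $B$ with $\overline D\Subset B$ (extend each entry as a $C^1$ function; positive-definiteness persists near $\overline D$ by continuity, and one shrinks $B$ accordingly). Given $w\in V_{\rm scal}(D;\MA)$, set $f:=\nabla\cdot[\MA\nabla w]-w\in L^2(D)$, extend $f$ by zero to $B$, and let $v\in H^1_0(B)$ be the Lax--Milgram solution of $\nabla\cdot[\widetilde\MA\nabla v]-v=f$ in $B$ (the associated sesquilinear form $\int_B\widetilde\MA\nabla v\cdot\conj{\nabla\phi}+v\conj\phi$ is coercive on $H^1_0(B)$). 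By interior elliptic regularity \cite[Theorem~4.16]{MCL00} (applicable since $\widetilde\MA\in C^1\subset C^{0,1}$ and $\nabla\cdot[\widetilde\MA\nabla v]=f+v\in L^2$), $v$ is in $H^2$ of a neighbourhood of $\overline D$, hence $v|_D\in H^2(D)$. Consequently $v\in V_{\rm scal}(D;\MA)$, and since the trace inequalities on the Lipschitz set $D$ make $H^2(D)$-convergence imply $V_{\rm scal}(D;\MA)$-convergence, density of $C^\infty(\overline D)$ in $H^2(D)$ \cite[Theorem~3.29]{MCL00} shows $v$ is approximable in $V_{\rm scal}(D;\MA)$ by $\DOmegabar$. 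It remains to approximate $u:=w-v$, which lies in $V_{\rm scal}(D;\MA)$ and solves the \emph{homogeneous} equation $\nabla\cdot[\MA\nabla u]-u=0$ in $D$ with $\partial_{n,\MA}u\in L^2(\partial D)$; Corollary~\ref{cor:reg} (with $V=1$) then gives $u\in C^{1,s}(D)$ for all $s<1$, $(\MA\nabla u)^*\in L^2(\partial D)$, and $\partial_{n,\MA}u=\dudnA$.

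To approximate $u$, take the partition of unity $\{\chi_i\}_{i=0}^N$ of \eqref{eq:normal2}, with $\chi_0$ supported in a compact subset of $D$ and $\{\chi_i\}_{i\ge1}$ subordinate to the boundary charts $\{W_i\}$, and write $u=\sum_{i=0}^N\chi_i u$. The interior piece $\chi_0 u$ has compact support in $D$ and, by interior regularity, lies in $H^2(D)$, so mollification yields functions in $C^\infty_{\mathrm{comp}}(D)\subset\DOmegabar$ converging in $H^2(D)$, hence in $V_{\rm scal}(D;\MA)$. For a boundary piece $\chi_i u$, $i\ge1$, work in the rotated coordinates of the $i$-th chart, in which $D\cap W_i$ lies below the Lipschitz graph $f_i$ and $\hb_i=r_i^{-1}(\be_d)$ is the outward-pointing ``vertical'' unit vector, and set $u_i^{(t)}(\bx):=(\chi_i u)(\bx-t\hb_i)$ (after extending $\chi_i u$ suitably by zero off $W_i$). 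For $0<t\le t^*$ the translated hypograph strictly contains $\overline{D\cap\supp\chi_i}$, so $u_i^{(t)}$ is defined and, by interior regularity, in $H^2$ on a neighbourhood of $\overline D$; hence for a mollifier width $\eta\ll t$, the functions $u_i^{(t)}*\psi_\eta\in\DOmegabar$ converge to $u_i^{(t)}$ in $H^2(D)$, and so in $V_{\rm scal}(D;\MA)$, as $\eta\to0$ with $t$ fixed. A diagonal argument thus reduces the whole problem to showing $u_i^{(t)}\to\chi_i u$ in $V_{\rm scal}(D;\MA)$ as $t\to0^+$.

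For this last step, the $H^1(D)$-part is immediate from $L^2$-continuity of translations. For the $L^2(D)$-convergence of $\nabla\cdot[\MA\nabla u_i^{(t)}]$, write it as the translate of $\nabla\cdot[\MA\nabla(\chi_i u)]$ — an honest $L^2(D)$ function, equal to $\chi_i u+\nabla\chi_i\cdot\MA\nabla u+u\,\nabla\cdot[\MA\nabla\chi_i]+\MA\nabla\chi_i\cdot\nabla u$ (using $\nabla\cdot[\MA\nabla u]=u$), which is in $L^2(D)$ since $u\in H^1(D)$ and $\MA\in C^1$ — plus the commutator $\nabla_\bx\cdot\big[\big(\MA(\bx)-\MA(\bx-t\hb_i)\big)(\nabla(\chi_i u))(\bx-t\hb_i)\big]$; in this commutator the part carrying $\nabla\MA(\bx)-\nabla\MA(\bx-t\hb_i)$ tends to $0$ in $L^2(D)$ by uniform continuity of $\nabla\MA$, leaving the part in which $\MA(\bx)-\MA(\bx-t\hb_i)=O(t)$ multiplies $(\nabla^2(\chi_i u))(\bx-t\hb_i)$. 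Finally, for $\partial_{n,\MA}u_i^{(t)}\to\partial_{n,\MA}(\chi_i u)$ in $L^2(\partial D)$ and $\gamma u_i^{(t)}\to\gamma(\chi_i u)$ in $H^1(\partial D)$ one uses \eqref{eq:lieps}: the shifted evaluation points $\bx-t\hb_i$ lie in the non-tangential approach set $\Theta(\bx)$, so $|(\MA\nabla u)(\bx-t\hb_i)|\le(\MA\nabla u)^*(\bx)$ with $(\MA\nabla u)^*\in L^2(\partial D)$ independently of $t$; the a.e.\ existence of the non-tangential limits (Corollary~\ref{cor:reg}) then lets the dominated convergence theorem conclude, and the $H^1(\partial D)$ statement follows because tangential derivatives of a trace are traces of derivatives.

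The main obstacle is the $O(t)\,\nabla^2 u$ term above. Since $u\notin H^2(D)$ up to the boundary, $\|(\nabla^2(\chi_i u))(\cdot-t\hb_i)\|_{L^2(D)}$ behaves like $\|\nabla^2 u\|_{L^2(\{\dist(\cdot,\partial D)\gtrsim t\})}$, and a crude interior $H^2$-estimate only gives an $O(t^{-1})$ bound, exactly borderline against the $O(t)$ prefactor. What is needed is the sharper near-boundary bound $\int_D|\nabla^2 u|^2\,\dist(\bx,\partial D)\di\bx<\infty$ (equivalently, an $O(t^{-1/2})$ bound on that norm), i.e.\ the variable-coefficient area-integral/square-function estimate for solutions with $L^2$ non-tangential maximal gradient — the substantive extra input, beyond the $\MA=\MI$ case of \cite{CoD98}, furnished by the theory of \cite{MiTa:99,MiTa:01}. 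With that in hand one assembles the pieces by a standard three-$\zeta$ argument: given $\zeta>0$, choose $t$ small so each $\|u_i^{(t)}-\chi_i u\|_{V_{\rm scal}(D;\MA)}$ is tiny, then $\eta\ll t$ so the mollified $\sum_i u_i^{(t)}*\psi_\eta$ together with a mollification of $\chi_0 u$ is within $\zeta$ of $u$, and add the $\DOmegabar$-approximant of $v$ to finish.
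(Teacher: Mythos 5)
Your proposal follows the paper's route exactly: decompose $w=v+u$ with $v$ a variational solution of the inhomogeneous equation on a larger set (hence $v\in H^2(D)$ by interior regularity) and $u$ a solution of a homogeneous Neumann problem covered by Corollary~\ref{cor:reg}; localize with a partition of unity, translate the boundary pieces inward along $\hb_i$, mollify, and treat the boundary traces via the nontangential maximal function and dominated convergence, as in the paper. The noteworthy feature of your write-up is that you isolate the genuinely delicate point that the paper's bound \eqref{eq:C1needed} treats tersely: in the commutator $\nabla\cdot[(\MA-\MA_t)\nabla u_t]$ the factor $\MA-\MA_t=O(t)$ in $L^\infty$ multiplies $\nabla^2 u_t$, whose $L^2(G_i)$-norm blows up like $t^{-1/2}$ as $t\to0$ because $u\notin H^2(D)$; the crude bound $\N{\MA-\MA_t}_{W^{1,\infty}(G_i)}\N{u_t}_{H^2(G_i)}$ is $o(1)\cdot O(t^{-1/2})$ and does not obviously vanish, so the split you perform --- $\nabla(\MA-\MA_t)\cdot\nabla u_t$ giving $o(1)\cdot O(1)$, plus $(\MA-\MA_t)\cdot\nabla^2 u_t$ giving $O(t)\cdot\N{\nabla^2 u_t}_{L^2}$ --- is exactly what makes the argument close. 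One refinement: the variable-coefficient square-function estimate $\int_D|\nabla^2 u|^2\dist(\cdot,\partial D)\,\rd\bx<\infty$ that you invoke as the ``substantive extra input'' from \cite{MiTa:99,MiTa:01} is not needed beyond what Corollary~\ref{cor:reg} already states. From $(\MA\nabla u)^*\in L^2(\partial D)$ (hence $(\nabla u)^*\in L^2(\partial D)$ by ellipticity) a Fubini argument over the approach cones gives $\int_{\{d<\dist(\cdot,\partial D)<2d\}}|\nabla u|^2\lesssim d$; a Caccioppoli-type interior estimate for $\nabla\cdot[\MA\nabla u]=u$, expressed in terms of $\nabla u$ alone by subtracting a local mean, then gives $\int_{\{d<\dist<2d\}}|\nabla^2 u|^2\lesssim d^{-1}$; dyadic summation over $d\in[t,1]$ yields $\N{\nabla^2 u}_{L^2(\{\dist>t\})}=O(t^{-1/2})$, precisely the threshold you identify, whence $\N{\MA-\MA_t}_{L^\infty(G_i)}\N{\nabla^2 u_t}_{L^2(G_i)}=O(t^{1/2})\to0$. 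So your argument is correct and, at the critical step, more explicit than the paper's; you have simply over-estimated the amount of harmonic analysis required.
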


\bpf
By Part (i) of Theorem \ref{thm:Necas}, we can omit the $H^1(\partial D)$ term from the definition of the norm on $V_{\rm scal}(D;\MA)$ \eqref{eq:normVscal}, and we do for the rest of this proof.

Given $w \in V_{\rm scal}(D;\MA)$, choose $v\in \cD^{*}(\Rea^d)$ such that $\nabla\cdot[\MA \gv] - v = \nabla\cdot [\MA \nabla w]- w \in L^2(\Rea^d)$. Then, by interior elliptic regularity, $v \in H^2_{\rm loc}(\Rea^d)$ so $v\in H^2(D)$ and $\partial_{n,\MA}v = \hn \cdot \gamma (\MA\gv) \in L^2(\partial D)$.
Let $u:= w-v$; then $u\in H^1(D)$ with $\nabla\cdot[\MA \nabla u] - u=0$ and $\partial_{n,\MA}u\in L^2(\partial D)$. By Corollary~\ref{cor:reg}, $u$ then satisfies the regularity conditions in \eqref{eq:reg1}.

Since $\DOmegabar$ is dense in $H^2(D)$ and there exists a $C>0$ such that, for any $\varphi \in H^2(D)$, 
\beq\label{eq:H2}
\N{\varphi}_{V_{\rm scal}(D;\MA)} \leq C\big( \N{A}_{W^{1,\infty}(D)}+1 \big)\N{\varphi}_{H^2(D)}
\eeq
(where $\|\cdot\|_{V_{\rm scal}(D;\MA)}$ is defined by \eqref{eq:normVscal} without the $H^1(\partial D)$ term), 
 $v$ can be approximated by smooth functions in $V_{\rm scal}(D;\MA)$. Thus
it is sufficient to prove that, given $\eps>0$, there exists a $\phi \in \DOmegabar$ such that $\|u-\phi\|_{V_{\rm scal}(D;\MA)}<\eps$.
Furthermore, by interior regularity, $u\in H^2_{\rm loc}(D)$ and therefore, given $\eps>0$, there exists $\phi_0\in\DOmegabar$ such that $\|\chi_0 u - \phi_0\|_{H^2(D)}< \eps$ where $\chi_0$ is defined by \eqref{eq:normal2}. 
Therefore, using the partition of unity $\{\chi_i\}_{i=0}^N$ in \eqref{eq:normal2}, it is sufficient to prove that, given $\eps>0$, there exists $\phi_i \in \DOmegabar$ such that $\|\chi_i u -\phi_i\|_{V_{\rm scal}(D;\MA)} < \eps$.

Let $G_i:= D_i \cap \supp\chi_i$ and define $u_t$ 
\beq\label{eq:shift}
u_t(\bx):= u(\bx - t\hb_i) \quad \tfor 0<t\leq \eps^*,
\eeq
where $\eps^*$ is as in \eqref{eq:lieps}. By interior regularity $u_t \in H^2(G_i)$. Using again the fact that $\DOmegabar$ is dense in $H^2$, and the fact that the $H^2$ norm controls the $V_{\rm scal}(D;\MA)$ norm (by \eqref{eq:H2}), by the triangle inequality, to obtain the result it is sufficient to prove that 
\beqs
\N{u-u_t}_{V_{\rm scal}(D;\MA)} \tendo \quad \tas t\tendo.
\eeqs
With $(\gu)_t$ and $(\nabla\cdot [\MA\gu])_t$ defined in an analogous way to \eqref{eq:shift}, we have $(\gu)_t = \gu_t$ and $(\nabla\cdot [\MA\gu])_t= \nabla \cdot [\MA_t \gu_t]$. Since translation is continuous in $L^2$, 
\beq\label{eq:translation}
\N{u-u_t}_{L^2(G_i)}, \,\, \N{\gu - \gu_t}_{L^2(G_i)}, \,\, \tand \,\, \N{\nabla \cdot [\MA \gu]-\nabla \cdot [\MA_t \gu_t]}_{L^2(G_i)} \tendo \,\,\tas t\tendo.
\eeq
Now, for some $C>0$,
\begin{align}\nonumber
&\N{\nabla\cdot[\MA \gu] - \nabla\cdot[\MA \gu_t]}_{L^2(G_i)} \\
&\hspace{2cm}\leq \N{\nabla\cdot[\MA \gu] - \nabla\cdot[\MA_t \gu_t]}_{L^2(G_i)} + \N{\nabla\cdot[(\MA-\MA_t) \gu_t] }_{L^2(G_i)},\nonumber\\
&\hspace{2cm}\leq \N{\nabla\cdot[\MA \gu] - \nabla\cdot[\MA_t \gu_t]}_{L^2(G_i)} +  C \N{\MA-\MA_t}_{W^{1,\infty}(G_i)} \N{u_t}_{H^2(G_i)},
\label{eq:C1needed}
\end{align}
which tends to zero as $t\tendo$ using the last limit in \eqref{eq:translation} and the fact that $\MA\in C^1(\overline{G_i},\SPD)$.

It only remains to prove that $\|\partial_{n,\MA} u - \partial_{n,\MA} u_t\|_{L^2(\partial D_i)}\tendo$ as $t\tendo$. Since $u_t \in H^2(G_i)$, for $\bx \in \partial D_i$,
\begin{align*}
\partial_{n,\MA} u_t(\bx) &= \hn(\bx) \cdot \gamma \big( \MA \gu_t\big)(\bx) = \hn(\bx) \cdot  \big( \MA(\bx) \gu(\bx-t \hb_i)\big),
\end{align*}
where we have dropped the trace operator since $\MA\in C^1(\overline{D},\SPD)$ and $\gu\in C(\overline{D})$ by \eqref{eq:reg1}. Therefore,
\begin{align*}
\partial_{n,\MA} & u_t(\bx)= \hn(\bx) \cdot \Big[ \MA(\bx- t \hb_i) \nabla u (\bx- t \hb_i) + \big( \MA(\bx)- \MA(\bx - t \hb_i)\big) \gu(\bx - t \hb_i)\Big],\\
&= \hn(\bx) \cdot \Big[ \MA(\bx- t \hb_i) \nabla u (\bx- t \hb_i) + \big( \MA(\bx)- \MA(\bx - t \hb_i)\big) 
\MA^{-1}(\bx - t \hb_i)\MA(\bx - t \hb_i)\gu(\bx - t \hb_i)\Big],\\
& \xrightarrow{t\to0} \dudnA(\bx)\, +\, 0 \quad 
\text{by \eqref{eq:ntnormal} and the memberships $\MA\in C^1(\overline{D},\SPD)$ and $(\MA \gu)^*\in L^2(\partial D)$},\\
& = \partial_{n,A}u(\bx) \qquad \text{ by the last property in \eqref{eq:reg1}},
\end{align*}
and the proof is complete.
\epf

We make two remarks.
(i) The only step in the argument leading to Theorem~\ref{thm:Density} 
that fails if we assume only $\sfA\in W^{1,\infty}(D,\SPD)$ (as opposed to $\sfA\in C^1(\conj D,\SPD)$) is the use of $\N{\MA-\MA_t}_{W^{1,\infty}}\xrightarrow{t\to0}0$ in \eqref{eq:C1needed}.
(ii)
\cite[Lemma A.1]{MOS12} tried to prove density of $\DOmegabar$ in $V_{\rm scal}(D,\MI)$, i.e.\ a special case of Theorem~\ref{thm:scalar_density}, using the same method of proof as in Theorem \ref{thm:Density}. However, this proof was wrong; in particular, the condition $(\gu)^*\in L^2(\partial D)$ from \eqref{eq:reg1} (coming when $\MA=\MI$ originally from \cite[Theorem 2]{JeKe:81}) was not used.

\section*{Acknowledgements}

We thank Giovanni S.\ Alberti (University of Genoa), Ralf Hiptmair (ETH Z\"urich), Steven Johnson (MIT), 
Dirk Pauly (Universit\"at Duisburg-Essen), Luca Rondi (Universit\`a di Pavia)  and Michael Taylor (University of North Carolina at Chapel Hill) 
for useful discussions.

AM acknowledges support from GNCS--INDAM, from PRIN project ``NA\_FROM-PDEs'' and from MIUR through the ``Dipartimenti di Eccellenza'' Programme (2018--2022)--Dept.\ of Mathematics, University of Pavia.
EAS acknowledges support from EPSRC grant EP/R005591/1.

\paragraph{Competing-interests statement.} The authors declare no competing interests.

\footnotesize{
\bibliographystyle{siam}
\bibliography{references_andrea2020} 
\addcontentsline{toc}{section}{References}
}

\end{document}